\documentclass[12 pt, english]{article}
\usepackage[T1]{fontenc}
\usepackage{ulem}
\usepackage[utf8]{luainputenc}
\usepackage{geometry}
\usepackage[pdftex]{graphicx}
\geometry{verbose,tmargin=3cm,bmargin=3cm,lmargin=3cm,rmargin=3cm}
\usepackage{amstext}
\usepackage{amsthm}
\usepackage{amssymb}
\usepackage{amsmath}
\usepackage[T1,T2A]{fontenc}
\usepackage[utf8]{inputenc}
\usepackage[english]{babel}
\usepackage{esint}
\usepackage{babel}
\usepackage{float}
\usepackage{amsfonts}
\usepackage{fullpage} 
\usepackage{indentfirst}
\usepackage[colorlinks]{hyperref}
\usepackage{cmap}

\thispagestyle{empty}

\renewcommand{\Re}{\mathop{\mathrm{Re}}\nolimits}

\newtheorem{remark}{Remark}
\newtheorem{lemma}{Lemma}
\theoremstyle{definition}

\newtheorem{theorem}{Theorem}
\newtheorem{definition}{Definition}

\numberwithin{equation}{section}
\numberwithin{theorem}{section}
\numberwithin{remark}{section}
\numberwithin{proposition}{section}
\numberwithin{lemma}{section}
\numberwithin{corollary}{section}
\numberwithin{definition}{section}

\title{Large deviations for number of irreducible divisors\\of the Dirichlet series distribution}

\author{
  Michael~Cranston\footnote{University of California, Irvine, United States of America.
  E-mail: mcransto@uci.edu.}
  \and
  Mariia~Khodiakova\footnote{Universit\"{a}t Z\"{u}rich, Schweiz. 
  E-mail: mariia.khodiakova@math.uzh.ch.}}

\date{}

\begin{document}

\maketitle

\begin{abstract}
In this paper we produce precise large deviation estimates through the lens of mod-Poisson convergence. We apply a general result to various examples from number theory, Dedekind domains and polynomials over finite fields when an element is selected using a distribution based on a Dirichlet series.
\end{abstract}

\textbf{Keywords:} Dirichlet distribution, Large deviations, zeta functions, mod-Poisson.

\textbf{AMS Subject Classification:}   
11A41, 11R42, 11R59, 60F10

\section{Introduction}

In this paper we use Dirichlet series to sample elements from  commutative semigroups  $R$ that are endowed with an addition operation $+$ and multiplication operator $\cdot$, which is distributive over $+$, with a multiplicative identity element $1$, and without zero divisors. We assume a statement analogous to the Fundamental Theorem of Arithmetic holds, i.e. there is some set of prime elements and unique factorization, holds in $R$. In addition, we will need some notion of size of the elements that is a positive  integer valued, multiplicative function on $R.$  This role will be played by  the function $b$ which appears in definition of the Dirichlet series at (\ref{defA}) below.
For instance, $R$ could be the set of natural numbers $\mathbb{N}$ or a unique factorization domain (factorial ring) ($\mathbb{Z}$, $\mathbb{Z}[x]$) or a Dedekind domain. We shall further assume that the set of multiplicative invertible elements $E$ is finite;
$R$ can have an additive identity element $0$, but this is not required. 
That is why, further, if there is no $0$ in $R$ we will mean $R$ writing $R \setminus \{0\}$.

On account of our assumption about the Fundamental Theorem of Arithmetic, there is a well defined  greatest common divisor of two elements, which is the set of common divisors that are divisible by all other common divisors.
Using that we can introduce a notion analogous to that of a multiplicative function on the  positive integers.
\begin{definition}
A non-negative real-valued function $a$ is multiplicative on $R \setminus \{0\}$ if $a(n m) = a(n) a(m)$ for any $n, m \in R \setminus \{0\}$ such that $\gcd (n, m) = E$ 
and $a(e) = 1$ for any $e \in E$.  

The function $a(n)$ is completely multiplicative on $R \setminus \{0\}$ if $a(n m) = a(n) a(m)$ for any $n, m \in R \setminus \{0\}$ and $a(e) = 1$ for any $e \in E$.  
\end{definition}

Suppose we are given a commutative semigroup $R$ as above and let $a$ and $b$ be multiplicative functions on $R \setminus \{0\}$ satisfying  $a(n) \geq 0$ and $b(n) \ge 1$ for all $n\in R\setminus\{0\}$ .
Then define a Dirichlet series by
\begin{eqnarray}\label{defA}
A(z) = \sum_{n \in R \setminus \{0\}} \frac{a(n)}{b(n)^z},
\end{eqnarray}
that is an analytic function in $\Re z > \kappa$ for some real $\kappa$. 

In the article \cite{CM1}, the authors considered the Riemann zeta distribution of a positive integer-valued random variable. 
Similar to that work, we will consider the distribution of the $R$-valued non-zero random variable $X_s$, $s > \kappa$, based on the Dirichlet series and defined by
\begin{gather} \label{distr}
\mathbf{P}(X_s = n) = \frac{a(n)}{A(s) \, b(n)^s}, \quad n \in R \setminus \{0\}.
\end{gather}
We will call this the Dirichlet series distribution. In our applications, the quantity $b(n)$ can be thought of as a measure of the size of the element $n.$

Let $\mathcal{P}$ be the set of irreducible elements in $R$. 
Now we obtain some basic properties of this distribution. 
For any irreducible element $p \in \mathcal{P}$ and natural number $\alpha$ we get the following:
\begin{eqnarray*}
\begin{split}
\mathbf{P}\left(p^{\alpha} | X_s\right) =& \sum_{k \in R \setminus \{0\}} \mathbf{P}\left(X_s = p^{\alpha} k\right) \\
= &\frac{1}{A(s)} \sum_{k \in R \setminus \{0\}} \frac{a\left(p^{\alpha} k\right)}{b(p^{\alpha} k)^s} \\
= &\frac{1}{A(s)} \sum_{k : \gcd(k,p) = E} \sum_{m=0}^\infty\frac{a\left(p^{\alpha+m} k\right)}{b(p^{\alpha+m}k)^s} \\
= &\frac{1}{A(s)} \sum_{k : \gcd(k,p) = E} \sum_{m=0}^\infty\frac{a\left(p^{\alpha+m} \right)a\left(k \right)}{b(p^{\alpha+m} )^sb(k)^s} \\
= &\frac{1}{A(s)}  \sum_{m=0}^\infty \frac{a\left(p^{\alpha+m} \right)}{b(p^{\alpha+m} )^s} \sum_{k : \gcd(k,p) = E} \frac{a\left(k \right)}{b(k)^s} \\
= &\frac{A_{p^{\alpha}}(s)}{1 + A_p(s)},
\end{split}
\end{eqnarray*}
where
\begin{gather} \label{func_Ap}
A_{p^{\alpha}}(s) = \sum_{m=0}^{\infty} \frac{a\left(p^{\alpha+m}\right)}{b(p^{\alpha + m})^s} = 
\sum_{m=\alpha}^{\infty} \frac{a\left(p^{m}\right)}{b(p^m)^s},
\quad \alpha \in \mathbb{N}.
\end{gather}
A similar computation shows that for any distinct irreducible elements $p, q \in \mathcal{P}$ and natural numbers $\alpha$, $\beta,$
\begin{gather*}
\mathbf{P}\left(p^{\alpha} | X_s, q^{\beta} | X_s\right) = \mathbf{P}\left(p^{\alpha} | X_s\right) \mathbf{P}\left(q^{\beta} | X_s\right).
\end{gather*}

We consider two classic functions of number theory. 
The first one is the number of distinct irreducible divisors
\begin{eqnarray*}\label{omega}
\omega(n) = \# \{p \in \mathcal{P} : p | n\} = 
\sum_{p \in \mathcal{P}} I_{p | n},
\end{eqnarray*}
and the second one is the number of irreducible divisors counted with multiplicities
\begin{eqnarray*}\label{Omega}
\Omega(n) = \# \{p \in \mathcal{P}, \alpha \in \mathbb{N} : p^{\alpha} | n\} = 
\sum_{p \in \mathcal{P}} \sum_{\alpha=1}^{\infty} I_{p^{\alpha} | n}.
\end{eqnarray*}
In the papers \cite{CP} , \cite{CM1} and \cite{CM2},  a central limit theorem was proven for $\omega(X_s)$ and $\Omega(X_s).$ When $R$ is the set of positive integers, $a(n)\equiv 1$ and $b(n)=n$ and $s\searrow 1$ the function $A$ at (\ref{defA}) is the Riemann zeta function. In contrast, we also consider $\omega(X_s)$, $\Omega(X_s)$ as $s \searrow \kappa$, but compare their distribution with the Poisson distribution instead of the normal distribution and formulate some limit theorems using  large deviation theory obtained using mod-Poisson convergence.

The work is organized as follows. 
In section \ref{sec2} we consider the necessary information that will be used in the paper.
In section \ref{sec3} we formulate and prove the main results about the number of irreducible divisors of $X_s$, counted without multiplicities.
In section \ref{sec4} we obtain similar results for the number of irreducible divisors of $X_s$, counted with multiplicities.
In section \ref{sec5} we consider examples of various integer-valued Dirichlet series distributions. 
In section \ref{sec6} we consider non-numeric applications for ideals in Dedekind domains and random polynomials with coefficients in a finite field.

\section{Preliminaries} \label{sec2}

Let $\{\xi_n: n\ge 1\}$ be a sequence of real-valued random variables, and denote by
$\phi_n (z) = \mathbf{E} e^{z \xi_n}$ their moment generating functions. We suppose these exist for all $n$ in a fixed strip
\begin{gather*}
\mathcal S_{(c,d)} = \{z: c < \Re z < d\}, \quad c < 0 < d.
\end{gather*}
We also assume that $\phi$ is a non-constant infinitely divisible distribution  with moment generating function
\begin{gather*}
e^{\eta(z)} \equiv \int_{\mathbb{R}} e^{z x} \phi(dx)
\end{gather*} 
that is well defined on $\mathcal S_{(c,d)}$, and that $\psi(z)$ is an analytic function  that does not vanish on the real part of $\mathcal S_{(c,d)}.$ 

\begin{definition}
We say that the random sequence $\{\xi_n: n\ge 1\}$ converges mod-$\phi$ as $n \to \infty$ on $\mathcal S_{(c,d)}$ 
with parameters $t_n$ and limiting residue function $\psi$ if
\begin{gather*} \label{sec2:eq1}
\psi_n(z) \equiv e^{- t_n \eta(z)} \phi_n(z) \to \psi(z), 
\quad n \to \infty,
\end{gather*}
locally uniformly over $z \in \mathcal S_{(c,d)}$,
where $t_n$ is some sequence tending to $+\infty$. 

In addition, we say that the sequence of random variables 
$\{\xi_n: n\ge 1\}$ converges mod-$\phi$ at speed 
$O\left(t_n^{-v}\right)$ if for any compact subset $K$ of $\mathcal S_{(c,d)}$ there is a positive constant $C_K$ such that
\begin{gather*}
\left|\psi_n(z) - \psi(z)\right| < C_K t_n^{-v}
\end{gather*}
for any $z \in K.$
\end{definition}
We can use the similar definition with the $o(\cdot)$
notation.  In this paper we will be dealing not with a sequence of random variables, but rather with a family of random variables parametrized by real values $s$ greater than some value $\kappa.$ Then the limit in the definition of mod-Poisson convergence will be as $s$ goes down to $\kappa$ rather than as $n$ goes to $\infty,$
 all else being the same.
In the general case, this notation was used in the papers 
\cite{FMN1} and \cite{FMN2}.
When $\phi$ is the standard Gaussian, 
in that case $\eta(z) = z^2/2,$ or standard Poisson distribution, where
$\eta(z) = e^z - 1,$  we talk about mod-Gaussian or mod-Poisson convergence, respectively. 
One can read more about these convergences in the articles 
\cite{JKN} or \cite{KN1}, respectively.

The notion of mod-$\phi$ convergence is related to the 
Legendre-Fenchel transform. The Legendre-Fenchel transform of a function $\eta$ is defined by
\begin{gather}\label{L-F}
F(x) = \sup_{h \in \mathbb{R}} (h x - \eta(h)).
\end{gather}
This is an involution on convex lower semi-continuous functions.
For instance, if we consider mod-Poisson convergence then $\eta(h)=e^h-1$ and the $\sup$ is attained at $h = \ln x,$ so that 
\begin{gather*}
 \quad F(x) = 
\begin{cases}
x \ln x - x+1, \quad x > 0, \\
+\infty, \quad x \leq 0.
\end{cases}
\end{gather*}

Also, there is a connection between mod-$\phi$ convergence and the Cramer transform. 
The Cramer transform of a random variable $\xi$ with a parameter 
$h \in [H^-, H^+]$ for some $H^- \leq 0 \leq H^+$ is defined by
\begin{gather*}
\mathbf{P} \left(\xi^{(h)}\in dx\right) = R(h)^{-1} e^{hx} \mathbf{P}(\xi \in dx),
\quad R(h) = \int_{\mathbb{R}} e^{hx} \mathbf{P}(\xi \in dx) = \mathbf{E} e^{h \xi} < \infty.
\end{gather*}
The distribution of the random variable
$\xi^{(h)}$ is called the conjugate distribution with a parameter $h$ with respect to the distribution of $\xi$. 
We can obtain that for any parameter $h \in [H^-, H^+]$ the following is true: 
\begin{gather*}
\mathbf{E} \xi^{(h)} = \left(\ln{R(h)}\right)', \quad
\mathbf{D} \xi^{(h)} = \left(\ln{R(h)}\right)''. 
\end{gather*}
Note that if $\psi \equiv 1$ then the function $\ln R(h)$ corresponds to $\eta(h)$ and we can consider Legendre-Fenchel transform of the function $\ln R(h)$.

We will need the following statements for lattice random variables from the paper \cite{FMN2}.

\begin{lemma} \label{sec2:lem1}
Let $\xi$ be a $\mathbb Z$-valued random variable whose generating function $\phi_{\xi} (z) = \mathbf{E} e^{z \xi}$
converges absolutely in the strip $\mathcal S_{(c,d)}$. 
Fix $k \in \mathbb{Z}.$  Then
\begin{eqnarray*}\label{fourier1}
\forall h \in (c, d) \quad 
\mathbf{P} (\xi = k) = \frac{1}{2 \pi} \int_{-\pi}^{\pi}
e^{- k (h + i u)} \phi_{\xi} (h + i u) du, 
\end{eqnarray*}
\begin{eqnarray*}\label{fourier2}
\forall h \in (0, d) \quad
\mathbf{P} (\xi \geq k) = \frac{1}{2 \pi}
\int_{-\pi}^{\pi}
\frac{e^{- k (h + i u)}}{1 - e^{-(h+iu)}} \phi_{\xi} (h + i u) du.
\end{eqnarray*}
\end{lemma}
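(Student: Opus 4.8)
The plan is to establish both identities as instances of the Fourier inversion formula for $\mathbb{Z}$-valued random variables, with the only subtlety being the justification of the contour shift (from the real axis to the line $\Re z = h$) and, for the second identity, the summation of a geometric series inside the integral. First I would set $z = h + iu$ with $h \in (c,d)$ and observe that since $\phi_\xi(z) = \sum_{k \in \mathbb{Z}} \mathbf{P}(\xi = k) e^{kz}$ converges absolutely on $\mathcal{S}_{(c,d)}$, the series $\sum_k \mathbf{P}(\xi = k) e^{k(h+iu)}$ converges absolutely and uniformly in $u \in [-\pi,\pi]$ for each fixed such $h$. Hence one may integrate term by term:
\begin{eqnarray*}
\frac{1}{2\pi} \int_{-\pi}^{\pi} e^{-k(h+iu)} \phi_\xi(h+iu)\, du
= \sum_{m \in \mathbb{Z}} \mathbf{P}(\xi = m) e^{(m-k)h} \cdot \frac{1}{2\pi}\int_{-\pi}^{\pi} e^{i(m-k)u}\, du.
\end{eqnarray*}
The inner integral is the standard orthogonality relation on the circle: it equals $1$ when $m = k$ and $0$ otherwise. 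Only the term $m = k$ survives, the factor $e^{(m-k)h}$ is then $1$, and we are left with $\mathbf{P}(\xi = k)$, which is the first formula.

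For the second identity I would write $\mathbf{P}(\xi \geq k) = \sum_{j \geq k} \mathbf{P}(\xi = j)$ and apply the first formula to each term, giving
\begin{eqnarray*}
\mathbf{P}(\xi \geq k) = \sum_{j \geq k} \frac{1}{2\pi}\int_{-\pi}^{\pi} e^{-j(h+iu)} \phi_\xi(h+iu)\, du
= \frac{1}{2\pi}\int_{-\pi}^{\pi} \left(\sum_{j \geq k} e^{-j(h+iu)}\right) \phi_\xi(h+iu)\, du,
\end{eqnarray*}
where the interchange of sum and integral is legitimate because, for $h \in (0,d)$, we have $|e^{-(h+iu)}| = e^{-h} < 1$, so the geometric series in $j$ converges and the partial sums are dominated by an integrable function of $u$ uniformly (using absolute convergence of $\phi_\xi$ on the relevant strip to control $\phi_\xi(h+iu)$). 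Summing the geometric series, $\sum_{j \geq k} e^{-j(h+iu)} = e^{-k(h+iu)}/(1 - e^{-(h+iu)})$, which yields the claimed expression. Note the restriction to $h > 0$ is exactly what makes this geometric series converge, explaining why the second formula has a narrower range of validity than the first.

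The main obstacle — really the only point requiring care — is the double justification of interchanging infinite summation with integration, both in the term-by-term integration for the first identity and in the geometric summation for the second. For the first identity this is handled cleanly by the hypothesis of absolute convergence of $\phi_\xi$ in the open strip, which by a standard argument gives uniform convergence of $\sum_k \mathbf{P}(\xi=k) e^{k(h+iu)}$ on the compact interval $u \in [-\pi,\pi]$; uniform convergence permits termwise integration. For the second, one combines $e^{-h} < 1$ with the same absolute-convergence bound on $\phi_\xi$ along the line $\Re z = h$ to produce a dominating function and invoke dominated convergence (or Fubini–Tonelli). Everything else is the orthogonality computation $\frac{1}{2\pi}\int_{-\pi}^{\pi} e^{inu}\,du = \delta_{n,0}$ and the summation of a geometric series, both entirely routine. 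Since the statement is quoted from \cite{FMN2}, it would also be acceptable simply to cite that reference, but the self-contained argument above is short enough to include.
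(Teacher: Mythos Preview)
Your argument is correct and is precisely the standard proof: Fourier orthogonality on the circle for the first identity, and geometric summation under the integral (justified by $e^{-h}<1$ and the absolute convergence of $\phi_\xi$ on the line $\Re z=h$) for the second. The paper itself does not supply a proof of this lemma; it is simply quoted from \cite{FMN2}, so your self-contained derivation goes beyond what the paper provides while matching the expected approach.
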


\begin{theorem} \label{sec2:th1}
Suppose a sequence of random  variables $\{\xi_n: n\ge 1\}$ converges mod-$\phi$ with parameters $t_n$ and residue function $\psi$ at speed $o(1)$.
Let $F$ be given by (\ref{L-F}) and $h$ be defined by the implicit equation 
$\eta'(h) = x$. 
\begin{enumerate}
\item The following expansion holds:
\begin{eqnarray*}\label{ld1}
\mathbf{P} (\xi_n = t_n x) = \frac{e^{-t_n F(x)}}{\sqrt{2 \pi t_n \eta''(h)}} \psi(h) (1 + o(1)), \quad n \to \infty,
\end{eqnarray*}
where $o(1)$ is uniformly small over $x \in (\eta'(c), \eta'(d))$,
$t_n x \in \mathbb N$.
\item Similarly, 
\begin{eqnarray*}\label{ld2}
\mathbf{P} (\xi_n \geq t_n x) = \frac{e^{-t_n F(x)}}{\sqrt{2 \pi t_n \eta''(h)}} \frac{1}{1-e^{-h}}\psi(h) (1 + o(1)), \quad n \to \infty,
\end{eqnarray*}
where $o(1)$ is uniformly small over $x \in (\eta'(0), \eta'(d))$,
$t_n x \in \mathbb N$.
\item By applying the result to $-\xi_n$, one gets 
\begin{eqnarray*}\label{ld3}
\mathbf{P} (\xi_n \leq t_n x) = \frac{e^{-t_n F(x)}}{\sqrt{2 \pi t_n \eta''(h)}} \frac{1}{1-e^{-|h|}} \psi(h) (1 + o(1)), \quad n \to \infty,
\end{eqnarray*}
where $o(1)$ is uniformly small over $x \in (\eta'(c), \eta'(0))$,
$t_n x \in \mathbb N$.
\end{enumerate}
\end{theorem}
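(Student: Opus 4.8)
The plan is to run a saddle-point (Laplace) argument on the exact Fourier-inversion formulas supplied by Lemma \ref{sec2:lem1}, exploiting the mod-$\phi$ factorization $\phi_{\xi_n}(z) = e^{t_n \eta(z)}\psi_n(z)$ with $\psi_n \to \psi$ locally uniformly.

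For part (1), I would start from
$$\mathbf{P}(\xi_n = t_n x) = \frac{1}{2\pi}\int_{-\pi}^{\pi} e^{-t_n x(h+iu)}\phi_{\xi_n}(h+iu)\,du$$
and take the free parameter to be precisely the saddle point $h = h(x)$ solving $\eta'(h) = x$. Substituting the factorization gives
$$\mathbf{P}(\xi_n = t_n x) = \frac{e^{t_n(\eta(h) - xh)}}{2\pi}\int_{-\pi}^{\pi} e^{t_n(\eta(h+iu) - \eta(h) - ixu)}\psi_n(h+iu)\,du.$$
Since $\eta$ is convex (a log-moment generating function) and $\eta'(h) = x$, the maximizer in (\ref{L-F}) is exactly this $h$, so $\eta(h) - xh = -F(x)$ and the prefactor is $e^{-t_n F(x)}$. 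It then remains to show the integral equals $\psi(h)\sqrt{2\pi/(t_n\eta''(h))}\,(1+o(1))$. Writing $g(u) = \eta(h+iu) - \eta(h) - ixu$, one has $g(0) = 0$, $g'(0) = i(\eta'(h) - x) = 0$ and $g''(0) = -\eta''(h) < 0$ (using that $\phi$ is non-degenerate), so $\Re g(u) \le -c u^2$ on a fixed window $|u|\le\delta$. Split the integral into this window and its complement: on the complement $\Re g$ is bounded away from $0$ — here one invokes that for an infinitely divisible $\phi$ the map $u\mapsto\Re\eta(h+iu)$ attains its maximum on $[-\pi,\pi]$ only at $u=0$ — so that piece is $O(e^{-\rho t_n})$ and negligible; on the window, the substitution $u = v/\sqrt{t_n}$, the Taylor expansion $t_n g(v/\sqrt{t_n}) = -\tfrac12\eta''(h)v^2 + O(|v|^3/\sqrt{t_n})$, the local uniform convergence $\psi_n(h+iv/\sqrt{t_n})\to\psi(h)$, and dominated convergence with dominating function $Ce^{-cv^2}$ yield $\tfrac{1}{\sqrt{t_n}}\psi(h)\int_{\mathbb R}e^{-\eta''(h)v^2/2}\,dv\,(1+o(1)) = \tfrac{1}{\sqrt{t_n}}\psi(h)\sqrt{2\pi/\eta''(h)}\,(1+o(1))$. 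Dividing by $2\pi$ gives part (1).

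Part (2) is identical, except that one starts from the second inversion formula in Lemma \ref{sec2:lem1}, which carries the extra analytic factor $1/(1 - e^{-(h+iu)})$; for $h>0$ this is continuous and nonzero near $u=0$, contributing its value $1/(1-e^{-h})$ at the saddle point while remaining bounded on the tail, and the rest of the estimate is unchanged. Part (3) follows by applying part (2) to $-\xi_n$: one checks $-\xi_n$ converges mod-$\tilde\phi$ with $\tilde\eta(z) = \eta(-z)$, the same $t_n$, and residue $z\mapsto\psi(-z)$, and that $\mathbf{P}(\xi_n\le t_n x) = \mathbf{P}(-\xi_n\ge t_n(-x))$; pushing the Legendre transform and the saddle point through the sign change reproduces $e^{-t_n F(x)}$ together with the factor $1/(1-e^{-|h|})$.

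The main obstacle is the \emph{uniformity} of the $o(1)$ over the full open interval $x\in(\eta'(c),\eta'(d))$: as $x$ approaches an endpoint the saddle point $h(x)$ runs to the edge of the strip, where $\eta''(h)$, the Taylor remainders of $g$, and the gap $\rho$ controlling the tail of the Fourier integral may all degenerate. Making the split-and-estimate argument above uniform therefore requires controlling these quantities uniformly in $h$ over the relevant range, and in particular establishing the aperiodicity bound $\sup_{\delta\le|u|\le\pi}\Re\eta(h+iu) < \eta(h)$ with a gap that does not collapse. This is the technical heart of the argument (carried out in \cite{FMN2}); the remainder is the classical Laplace method.
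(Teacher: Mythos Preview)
Your sketch is sound and follows the standard saddle-point route, but note that the paper does not actually prove Theorem~\ref{sec2:th1}: it is quoted verbatim from \cite{FMN2} as a preliminary result, with no argument given. So there is no ``paper's own proof'' to compare against---you have in fact supplied more than the paper does. Your outline (Fourier inversion from Lemma~\ref{sec2:lem1}, tilt to the saddle $h$ with $\eta'(h)=x$, Laplace expansion of the resulting integral, and reduction of part (3) to part (2) via $\xi_n\mapsto -\xi_n$) is exactly the strategy of \cite{FMN2}, and you correctly flag the uniformity in $x$ as the genuine technical content.
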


\section{Number of distinct irreducible divisors of $X_s$} \label{sec3}

Now we consider the random variables $\omega(X_s)$ where $X_s$,
$s > \kappa$, has the Dirichlet series distribution at (\ref{distr}). 
The expectation and variance of $\omega(X_s)$ are given by the following:
\begin{gather*}
\mathbf{E} \omega(X_s) = \mathbf{E} \sum_{p \in \mathcal{P}} I_{p | X_s} = 
\sum_{p \in \mathcal{P}} \frac{A_{p}(s)}{1+A_{p}(s)} =: \mathbb{P} (s), \\
\nonumber
\mathbf{D} \omega(X_s) = \mathbf{E} \omega^2 (X_s) - \left(\mathbf{E} \omega(X_s)\right)^2 = 
\mathbb{P}(s) - \sum_{p \in \mathcal{P}} \left(\frac{A_{p}(s)}{1+A_{p}(s)}\right)^2.
\end{gather*}
The quantity $A_p(s)/(1+A_p(s))$ comes up frequently so we shall simplify notation by defining
\begin{eqnarray*}\label{defalp}
\alpha_p(s)=\frac{A_p(s)}{1+A_p(s)}.
\end{eqnarray*}

We will make the following assumptions:
\begin{enumerate}
\item[${\bf{A1}}$] There exists a positive number $\kappa$ such that $A(s)$ converges for any $s > \kappa$.

\item[${\bf{A2}}$] $A(s) \nearrow +\infty$ as $s \searrow \kappa$.

\item[${\bf{A3}}$] There exists an enumeration of irreducible elements $\mathcal{P} = \{p_i, i \in \mathbb{N}\}$ such that 
\begin{eqnarray} 
\nonumber
b(p_i) \to \infty, \quad i \to \infty, \\
\label{APbdd}
A_{p_i}(\kappa) \to 0, \quad i \to \infty.
\end{eqnarray}
From now on, our enumerations of $\mathcal{P}$ we will satisfy this condition.

\item[${\bf{A4}}$] For any natural numbers $n \geq 2$, $l \geq 1$ and real $s \geq \kappa,$ 
\begin{eqnarray}\label{A4}
\sum_{i > l} \alpha^n_{p_i}(s) \leq \frac{C(n,l)}{b(p_l)^{\,n (s - (\kappa-1))}}
\end{eqnarray}
for some positive function $C(n, l)$ satisfying
\begin{eqnarray}\label{A4a}
\lim_{l\to\infty}\sup_{n\ge1}\frac{C(n,l)}{b(p_l)^{\,n (s - (\kappa-1))}} = 0, 
\end{eqnarray}
and  for all $l$ and for some $c > 0,$
\begin{eqnarray}\label{A4b}
\limsup_{n \to \infty} C(n,l)^{1/n} \leq 1/c.
\end{eqnarray}

\item[${\bf{A5}}$] For any $p \in \mathcal{P},$ the derivative $A'_p(s)$ exists for any $s \geq \kappa,$ and
\begin{eqnarray*}\label{A5}
\sum_{p \in \mathcal{P}} A'_p(s) A_p(s)<\infty.
\end{eqnarray*}
\end{enumerate}

A quantity that plays a large role in this paper is
\begin{gather} \label{func_Pn}
\mathbb{P}_n(s) \equiv \sum_{p \in \mathcal{P}} \alpha^n_p(s), \quad n \geq 1,
\end{gather}
where
\begin{eqnarray*}
 \mathbb{P}_1(s) \equiv \mathbb{P}(s).
\end{eqnarray*}

Observe that due to ${\bf{A4}}$ we get 
\begin{eqnarray*}\label{varest}
\mathbf{D} \omega(X_s) = \mathbb{P}(s) - \mathbb{P}_2(s) = \mathbb{P}(s) + O(1), \quad s \searrow \kappa.
\end{eqnarray*}

The next Lemma is the analog of Euler's famous  equation:
\begin{eqnarray*}
\ln \zeta(s)=\sum_p\frac{1}{p^s}+ O(1), \quad s \searrow 1,
\end{eqnarray*}
where $\zeta$ is the Riemann zeta function.
\begin{lemma} \label{sec3:lem1}
Assume conditions ${\bf{A1-A4}}$ hold. Then
$\mathbb{P}(s)$ converges for any $s > \kappa$ and 
\begin{gather*}
\mathbb{P}(s) \asymp \ln A(s), \quad s \searrow \kappa.
\end{gather*}
\end{lemma}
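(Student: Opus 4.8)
\emph{Proof sketch.} The plan is to sandwich $\mathbb P(s)=\sum_{p\in\mathcal P}\alpha_p(s)$ between two positive multiples of $\ln A(s)$ via the Euler product for $A$. Since $a$ and $b$ are multiplicative, $R\setminus\{0\}$ has unique factorisation, and $a(e)=b(e)=1$ for each of the finitely many units $e\in E$, grouping the terms of (\ref{defA}) by prime factorisation gives
\begin{gather*}
A(s)=|E|\prod_{p\in\mathcal P}\bigl(1+A_p(s)\bigr),\qquad s>\kappa,
\end{gather*}
the rearrangement being legitimate since all terms are non-negative and $A(s)<\infty$ by ${\bf A1}$. In particular $A(s)\ge|E|\ge1$, and taking logarithms, $\ln A(s)=\ln|E|+\sum_{p\in\mathcal P}\ln\bigl(1+A_p(s)\bigr)$, with the series on the right convergent for every $s>\kappa$.

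For the upper bound I would use the elementary inequality $\frac{x}{1+x}\le\ln(1+x)$ ($x\ge0$), which gives $\alpha_p(s)\le\ln(1+A_p(s))$; summing over $p$ shows $\mathbb P(s)$ converges for every $s>\kappa$ with $\mathbb P(s)\le\sum_{p}\ln(1+A_p(s))\le\ln A(s)$. For the lower bound I would control the discrepancy $\Delta(s):=\sum_{p}\bigl(\ln(1+A_p(s))-\alpha_p(s)\bigr)\ge0$, which by the identity above satisfies $\mathbb P(s)=\ln A(s)-\ln|E|-\Delta(s)$. Using the companion inequality $\ln(1+x)-\frac{x}{1+x}\le\frac{x^2}{2}$ ($x\ge0$): by ${\bf A3}$ choose $i_0$ with $A_{p_i}(\kappa)\le1$ for $i\ge i_0$; as each term $a(p_i^m)b(p_i^m)^{-s}$ is non-increasing in $s$ (recall $b(p_i^m)\ge1$), so is $A_{p_i}(\cdot)$, hence $A_{p_i}(s)\le1$, and then $A_{p_i}(s)\le2\alpha_{p_i}(s)$, for all $s\ge\kappa$ and $i\ge i_0$. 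Consequently the tail of $\Delta(s)$ is $\le\sum_{i\ge i_0}\tfrac12 A_{p_i}(s)^2\le2\,\mathbb P_2(s)$, which is $O(1)$ as $s\searrow\kappa$ by ${\bf A4}$ (the estimate already used for $\mathbf D \omega(X_s)$ above), while the head $\sum_{i<i_0}\bigl(\ln(1+A_{p_i}(s))-\alpha_{p_i}(s)\bigr)\le\sum_{i<i_0}\ln(1+A_{p_i}(\kappa))$ is a finite constant. Thus $\Delta(s)\le C_0$ for some $C_0$ and all $s$ in a right neighbourhood of $\kappa$, so $\mathbb P(s)\ge\ln A(s)-\ln|E|-C_0$.

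To finish, since $A(s)\nearrow+\infty$ as $s\searrow\kappa$ by ${\bf A2}$, for $s$ close enough to $\kappa$ one has $\ln|E|+C_0\le\tfrac12\ln A(s)$, and together with the upper bound this gives $\tfrac12\ln A(s)\le\mathbb P(s)\le\ln A(s)$, i.e. $\mathbb P(s)\asymp\ln A(s)$ (in fact the argument yields the sharper $\mathbb P(s)=\ln A(s)+O(1)$). I expect the one genuinely delicate point to be the uniform-in-$s$ bound on $\Delta(s)$ near $\kappa$: the finitely many ``large'' local factors $A_{p_i}(s)$ with small $i$ must be absorbed into a constant through the crude head estimate, while the tail is tamed by $\mathbb P_2(s)$ — and this last step is exactly where hypothesis ${\bf A4}$ is needed. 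Everything else reduces to the Euler product identity, two one-variable inequalities, and the monotonicity of each $A_p(\cdot)$.
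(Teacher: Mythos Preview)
Your proof is correct and follows essentially the same route as the paper: Euler product, a head--tail split using ${\bf A3}$, and control of the quadratic tail via $\mathbb P_2(s)=O(1)$ from ${\bf A4}$. The only cosmetic difference is that the paper passes through the intermediate quantity $\sum_p A_p(s)$ (first $\ln A(s)=\sum_p A_p(s)+O(1)$ by Taylor expansion, then $\sum_p A_p(s)=\mathbb P(s)+O(1)$), whereas you compare $\ln(1+A_p)$ with $\alpha_p$ directly through the pair of inequalities $\frac{x}{1+x}\le\ln(1+x)\le\frac{x}{1+x}+\frac{x^2}{2}$; your version is arguably a little cleaner and immediately yields convergence of $\mathbb P(s)$ as well as the sharper $\mathbb P(s)=\ln A(s)+O(1)$.
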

\begin{proof}
Since $a$ and $b$ are multiplicative functions,
\begin{eqnarray*}
A(s) =& \prod_{p \in \mathcal{P}} \left(1 + \sum_{\alpha=1}^{\infty} \frac{a(p^{\alpha})}{b(p^{\alpha})^s}\right)\\ 
=&\prod_{p \in \mathcal{P}} \left(1 + A_p(s)\right)\\ 
=& \exp \left(\sum_{p \in \mathcal{P}} \ln \left(1 + A_p(s)\right)\right).
\end{eqnarray*}

Due to condition ${\bf{A3}}$, (\ref{APbdd}), $\exists \, N \in \mathbb{N}$ such that  $A_{p_i}(s) \leq A_{p_i}(\kappa) < 1$ for all  $i > N.$

Then we obtain the following:
\begin{eqnarray}\label{As}
\begin{split}
A(s) = &\exp \left(\sum_{i \leq N} \ln \left(1 + A_{p_i}(s)\right)\right) \exp \left(\sum_{i > N} \ln \left(1 + A_{p_i}(s)\right)\right) \\ 
=& \exp \left(\sum_{i \leq N} \ln \left(1 + A_{p_i}(s)\right)\right) \exp \left(\sum_{i > N} \left(A_{p_i}(s) - \frac{A^2_{p_i}(s)}{2} + o\left(A^2_{p_i}(s)\right)\right)\right) \\
= &\exp \left(\sum_{i \leq N} \left(\ln \left(1 + A_{p_i}(s)\right) - A_{p_i}(s)\right) - \sum_{i > N} \frac{A^2_{p_i}(s)}{2} + o(1)\right) \exp \left(\sum_{i=1}^{\infty} A_{p_i}(s)\right),
\end{split}
\end{eqnarray}
where according to ${\bf{A4}}$ the first factor of the last equality in (\ref{As}) is bounded as $s\searrow \kappa.$

This implies that 
$\ln A(s) =   \sum_{i=1}^{\infty} A_{p_i}(s)+O(1).$ 
But
\begin{eqnarray*}
 \sum_{i=1}^{\infty} A_{p_i}(s)= &\sum_{i=1}^{\infty} \alpha_{p_i}(s)(1+A_{p_i}(s))\\
 =&\sum_{i=1}^{\infty} \alpha_{p_i}(s)+\sum_{i=1}^{\infty}\frac{A^2_{p_i}(s))}{1+A_{p_i}(s)}
\end{eqnarray*}
and by (\ref{APbdd}) and (\ref{A4})
\begin{eqnarray*}
\sum_{i=1}^{\infty}\frac{A^2_{p_i}(s))}{1+A_{p_i}(s)}\le &\sup_p (1+A_{p}(\kappa))\sum_{i=1}^{\infty} \alpha^2_{p_i}(s)
=O(1).
\end{eqnarray*}
Thus, $\mathbb{P}(s)$ converges for any $s > \kappa$ and
\begin{gather*}
\ln A(s) \asymp  \left(\sum_{p \in \mathcal{P}} A_p(s)\right)
\asymp \mathbb{P}(s), 
\quad s \searrow \kappa.
\end{gather*}

Lemma \ref{sec3:lem1} is proved.
\end{proof}

We now consider the mod-$\phi$ convergence for $\omega(X_s)$. 

\begin{theorem} \label{sec3:prop1}
Assume conditions  ${\bf{A1-A5}}$ are satisfied and let the random variables $X_s, s > \kappa$, have the Dirichlet series distribution given at (\ref{distr}). Then $\omega(X_s)$ converges mod-Poisson as $s \searrow \kappa$ on $\mathbb{C}$ with parameters $t_s = \mathbb{P}(s)$ and residue function 
\begin{eqnarray}\label{psi}
\psi(z) = \exp \left(\sum_{p \in \mathcal{P}} \left(\ln \left(1 + \alpha_p(\kappa) (e^z - 1)\right) - \alpha_p(\kappa) (e^z - 1)\right)\right).
\end{eqnarray}

The mod-Poisson convergence occurs at speed 
$o(1)$, $s \searrow \kappa$.
\end{theorem}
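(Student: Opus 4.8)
The plan is to verify the definition of mod-Poisson convergence directly. Writing $\phi_s(z)=\mathbf{E}\,e^{z\omega(X_s)}$ and $\eta(z)=e^{z}-1$, I must show that $\psi_s(z):=e^{-t_s\eta(z)}\phi_s(z)\to\psi(z)$ locally uniformly on $\mathbb{C}$, with $t_s=\mathbb{P}(s)\to+\infty$ as $s\searrow\kappa$. The route is: (a) get a closed Euler-product form for $\phi_s$; (b) divide out the Poisson factor; (c) pass to the limit $s\searrow\kappa$ inside the resulting infinite product.

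For (a): since $\omega$ is additive on coprime arguments and $a$ is multiplicative, $n\mapsto a(n)e^{z\omega(n)}$ is multiplicative, and $\omega(p^{\alpha})=1$ for every $\alpha\ge1$. For fixed $z$ and $s>\kappa$ the non-negative series $\sum_n a(n)e^{(\Re z)\omega(n)}b(n)^{-s}$ equals $\prod_{p}(1+e^{\Re z}A_p(s))$, which is finite because $\sum_p A_p(s)<\infty$ (established in the proof of Lemma~\ref{sec3:lem1}); hence the Dirichlet series with weight $e^{z\omega(n)}$ converges absolutely and, by unique factorization,
\[
\phi_s(z)=\frac{1}{A(s)}\prod_{p\in\mathcal{P}}\bigl(1+e^{z}A_p(s)\bigr).
\]
Combining this with $A(s)=\prod_p(1+A_p(s))$ and the identity $(1+e^{z}A_p(s))/(1+A_p(s))=1+(e^{z}-1)\alpha_p(s)$ gives $\phi_s(z)=\prod_{p\in\mathcal{P}}\bigl(1+(e^{z}-1)\alpha_p(s)\bigr)$.

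For (b)--(c): since $t_s=\mathbb{P}(s)=\sum_p\alpha_p(s)$ one has $e^{-t_s\eta(z)}=\prod_p e^{-(e^{z}-1)\alpha_p(s)}$, so
\[
\psi_s(z)=\prod_{p\in\mathcal{P}}\bigl(1+(e^{z}-1)\alpha_p(s)\bigr)\,e^{-(e^{z}-1)\alpha_p(s)},\qquad \ln\psi_s(z)=\sum_{p}\Bigl[\ln\bigl(1+(e^{z}-1)\alpha_p(s)\bigr)-(e^{z}-1)\alpha_p(s)\Bigr].
\]
The core step is to send $s\searrow\kappa$ term by term, for which I would use: (i) since $b(p^{m})\ge1$, each $A_p(s)$, hence $\alpha_p(s)$, is nonincreasing in $s$, so $\alpha_p(s)\le\alpha_p(\kappa)$ for $s\ge\kappa$; and by $\mathbf{A5}$, $A_p(\kappa)<\infty$ (so $\alpha_p(\kappa)<1$) and $\alpha_p$ is right-continuous at $\kappa$, whence $\alpha_p(s)\to\alpha_p(\kappa)$; (ii) on a compact $K\subset\mathbb{C}$, with $M_K=\sup_{z\in K}|e^{z}-1|$, once $M_K\alpha_p(\kappa)\le\tfrac12$ the elementary inequality $|\ln(1+w)-w|\le|w|^2$ shows the $p$-th summand is $O(M_K^2\alpha_p(\kappa)^2)$ uniformly in $z\in K$ and $s\ge\kappa$; (iii) $\sum_p\alpha_p(\kappa)^2<\infty$, with $\sum_{i>l}\alpha_{p_i}(\kappa)^2\to0$ as $l\to\infty$, by $\mathbf{A4}$ with $n=2$, $s=\kappa$ (cf.\ (\ref{A4a})). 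Splitting the series at an index $l$ chosen via (iii) makes the tail uniformly small in $z\in K$ and in $s$ near $\kappa$, while the finite head converges as $s\searrow\kappa$, uniformly on $K$, by continuity of the finitely many $\alpha_{p_i}$. Thus $\psi_s\to\psi$ locally uniformly on $\mathbb{C}$, with $\psi$ equal to the above product, i.e.\ the exponential in (\ref{psi}).

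It remains only to note that $\psi$, a locally uniform limit of the entire functions $\psi_s$, is entire, and that for real $z$ each factor $1+(e^{z}-1)\alpha_p(\kappa)>1-\alpha_p(\kappa)>0$, so $\psi$ is non-vanishing on $\mathbb{R}$, as a residue function must be; and that $t_s=\mathbb{P}(s)\asymp\ln A(s)\to+\infty$ as $s\searrow\kappa$ by Lemma~\ref{sec3:lem1} together with $\mathbf{A2}$, while the claimed speed $o(1)$ is exactly the convergence just obtained. The main obstacle is steps (ii)--(iii): producing a summable tail estimate uniform both in $z$ over compact sets and in $s$ near $\kappa$, which is precisely what the quadratic bound of $\mathbf{A4}$--(\ref{A4a}), combined with the monotonicity $\alpha_p(s)\le\alpha_p(\kappa)$, is designed to furnish; everything else is routine.
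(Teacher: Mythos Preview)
Your proof is correct and arrives at the same conclusion, but the route differs from the paper's in one substantive way. You control the tail by the single quadratic bound $|\ln(1+w)-w|\le C|w|^{2}$ together with $\mathbf{A4}$ at $n=2$ and the monotonicity $\alpha_{p}(s)\le\alpha_{p}(\kappa)$, then pass to the limit by a head/tail split; condition $\mathbf{A5}$ enters only to guarantee $A_{p}(\kappa)<\infty$ and right-continuity of $\alpha_{p}$ at $\kappa$. The paper instead expands $\ln\psi_{s}(z)$ as a full power series in $(1-e^{z})$, uses all of $\mathbf{A4}$ (including~(\ref{A4b})) to justify the interchange of sums, and then---crucially---differentiates in $s$ and invokes $\mathbf{A5}$ to obtain the quantitative rate $|\psi_{s}(z)-\psi(z)|=O(s-\kappa)$. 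Your argument is more elementary and uses strictly less of the hypotheses, but it yields only the bare $o(1)$ speed stated in the theorem; the paper's $O(s-\kappa)$ is what feeds into Remark~\ref{sec3:rem1}, where under $\mathbf{A6}$ one upgrades to speed $o(\mathbb{P}(s)^{-\vartheta})$ for every $\vartheta>0$. So both proofs establish the theorem as stated, but the paper's derivative computation via $\mathbf{A5}$ is doing real work downstream that your approach would not supply without further argument. One minor caution: your passage through $\ln\psi_{s}$ tacitly avoids the zero set $\mathbb{S}_{s}=\{-\ln A_{p}(s)+i(\pi+2\pi k)\}$; the paper addresses this explicitly by noting these are removable for the product $\psi_{s}$, and you should too if you want the argument to be airtight on all of $\mathbb{C}$ rather than on compacts avoiding $\mathbb{S}_{\kappa}$.
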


\begin{proof}
Due to independence of the events $\{p | X_s\}$ and $\{q | X_s\}$, where $p$, $q$ are different irreducible elements, we get the following: 
\begin{eqnarray*}
\begin{split}
\phi_s (z) =& \mathbf{E} e^{z \omega(X_s)}\\
 =& \prod_{p \in \mathcal{P}} \left(1 + \alpha_p(s) (e^z - 1)\right) \\
=& \exp \left(\mathbb{P}(s) (e^z - 1) - \mathbb{P}(s) (e^z - 1) + \sum_{p \in \mathcal{P}} \ln \left(1 + \alpha_p(s)(e^z - 1)\right)\right)\\ 
=:& e^{\mathbb{P}(s) (e^z - 1)} \psi_s (z),
\end{split}
\end{eqnarray*}
where $\eta(z) = e^z - 1$ corresponds to the standard Poisson distribution.
For mod-Poisson convergence with parameters $t_s \equiv \mathbb{P}(s)$ and residue function $\psi(z) \equiv \psi_{\kappa}(z)$ we need to prove that the function $\psi(z)$ is well defined on some strip $\mathcal{S}_{(c, d)}$ and does not vanish on the real
part of $\mathcal{S}_{(c, d)}$. 
Let $e^{\vartheta_s(z)} \equiv \psi_s (z)$, $s \geq \kappa$.
Since the point $z$ is fixed, due to condition ${\bf{A3}}$ there exists 
an $l \in \mathbb{N}$ such that 
$|1 - e^z| < c \, b(p_l)$, 
where $c$ is from condition (\ref{A4b}). 
So in this domain we get
\begin{eqnarray*}
\psi_s (z) =& \exp \left(\sum_{i \leq l}
\left(\ln \left(1 + \alpha_{p_i}(s) (e^z - 1)\right) - \alpha_{p_i}(s) (e^z - 1)\right)\right) \\
\times& \exp \left(\sum_{i > l} \left(\alpha_{p_i}(s) (1 - e^z) - \sum_{n=1}^{\infty} \alpha_{p_i}(s)^n \frac{(1-e^z)^n}{n}\right)\right) \\
= &\exp \left(\sum_{i \leq l}
\left(\ln \left(1 + \alpha_{p_i}(s) (e^z - 1)\right) - \alpha_{p_i}(s) (e^z - 1)\right)\right) \\
\times &\exp \left(-\sum_{n=2}^{\infty} \left(\sum_{i > l} \alpha_{p_i}(s)^n\right) \frac{(1-e^z)^n}{n}\right).
\end{eqnarray*}
Using condition ${\bf{A4}},$ part (\ref{A4}),
for any $l \geq 1$, $n \geq 2$ and 
$s \geq \kappa,$ we obtain
\begin{gather*}
\left|\sum_{n=2}^{\infty} \left(\sum_{i > l} \alpha^n_{p_i}(s)\right) \frac{(1-e^z)^n}{n}\right| \leq 
\sum_{n=2}^{\infty} \frac{C(n,l)}{n} \left(\frac{|1-e^z|}{b(p_l)^{s -(\kappa - 1)}}\right)^n,
\end{gather*}
where the power series on the right side of the inequality converges for $z$ in a strip containing the origin due to the root test using (\ref{A4a}) and (\ref{A4b}).
It means that $\vartheta_s(z)$ and consequently $\psi_s (z)$ are well defined and holomorphic functions in the domain 
$\mathbb{C} \setminus \mathbb{S}_s$, where 
$\mathbb{S}_s \equiv \left\{- \ln A_p(s) + i (\pi + 2 \pi k), \, p \in \mathcal{P}, \, k \in \mathbb{Z}\right\}$, $s \geq \kappa$.\\
But  since $\phi_s (z) =  \prod_{p \in \mathcal{P}} \left(1 + \alpha_p(s) (e^z - 1)\right)$ we see that $ \psi_s (z) = 0$ for any complex number $z \in \mathbb{S}_s,\,\,s \geq \kappa$. 
That is why $\psi(z)$ is well defined on $\mathbb{C}$ due to its continuity and does not vanish on the real
part of $\mathbb{C}$. 
Let us now consider the speed of mod-Poisson convergence. 
For any $z \in \mathbb{C}$ 
we have
\begin{gather*}
|\psi_s(z) - \psi(z)| = |\psi(z) (f(s, z) - 1)| =
|\psi(z) f_s'(\kappa, z)| (s-\kappa) + o(s-\kappa), \quad s \searrow \kappa,
\end{gather*}
where $f(\kappa, z) = 1$, and
\begin{gather*}
f(s, z) = \exp \left(-(\mathbb{P}(s) - \mathbb{P}(\kappa)) (e^z - 1)\right) \\
\times \exp \left(\sum_{p \in \mathcal{P}} \left(\ln \left(1 + \alpha_p(s) (e^z - 1)\right) - \ln \left(1 + \alpha_p(\kappa) (e^z - 1)\right)\right)\right).
\end{gather*}
The derivative of $f(s, z)$ with respect to  $s$ is equal to
\begin{gather*}
f_s(s, z) = f(s, z) (e^z - 1)^2 \sum_{p \in \mathcal P} \frac{A'_p(s) A_p(s)}{(1 + A_p(s))^2 (1 + A_p(s) e^z)}.
\end{gather*}
Hence, if $s = \kappa$ then
\begin{gather*}
f_s(\kappa, z) = (e^z - 1) \sum_{p \in \mathcal P} \frac{A'_p(\kappa) A_p(\kappa)}{(1 + A_p(\kappa))^2 (1 + A_p(\kappa) e^z)},
\end{gather*}
where the series is well defined for any 
$z \in \mathbb{C} \setminus \mathbb{S}_{\kappa}$ according to condition ${\bf{A5}}$ and the Weierstrass M-test.
Thus, we can define $\psi(z) f'(\kappa, z)$ at any point 
$z \in \mathbb{S}_{\kappa}$ with the limit at $z$ because any such point is a removable singularity of the holomorphic function $\psi(z) f'(\kappa, z)$.

Therefore, we obtain that
\begin{gather*}
|\psi_s(z) - \psi(z)| = O(s-\kappa) = o(1), \quad s \searrow \kappa.
\end{gather*}

Theorem \ref{sec3:prop1} is proved.
\end{proof}

We can apply Theorem \ref{sec2:th1} and get the following result about the large deviation probabilities for $\omega(X_s)$. We remark that item $3$ in the Theorem follows from item $2$ by  applying the result to $-\omega(X_s)$.
 
\begin{theorem} \label{sec3:th1}
Assume conditions ${\bf{A1-A5}}$ are satisfied and that $X_s$ has the Dirichlet series distribution given by (\ref{distr}).
Given $x,$ let $h$ be defined by the implicit equation $e^h = x$, 
and let  $\psi(\cdot)$ be the function defined at (\ref{psi}) in Theorem \ref{sec3:prop1}.
Then the following large deviation estimates hold:
\begin{enumerate}
\item For $x \in (\eta'(-\infty), \eta'(+\infty))=(0,\infty)$:
\begin{gather*}
\mathbf{P} \left(\omega(X_s) = \mathbb{P}(s) x\right) = \frac{e^{-\mathbb{P}(s) (x \ln x - x + 1)}}{\sqrt{2 \pi \mathbb{P}(s) x}} \psi(h) (1 + o(1)), \quad s \searrow \kappa,
\end{gather*}
where $o(1)$ is uniformly small over $x > 0$,
$\mathbb{P}(s) x \in \mathbb N$.
\item For $x \in (\eta'(0), \eta'(+\infty))=(1,\infty)$:
\begin{gather*}
    \mathbf{P} \left(\omega(X_s) \geq \mathbb{P}(s) x\right) = \frac{e^{-\mathbb{P}(s) (x \ln x - x + 1)}}{\sqrt{2 \pi \mathbb{P}(s) x}}
\frac{1}{1-x^{-1}} \psi(h) (1 + o(1)), \quad s \searrow \kappa,
\end{gather*}
where $o(1)$ is uniformly small over $x > 1.$ 
\item For $x \in (\eta'(-\infty), \eta'(0))=(0,1)$:
\begin{gather*}
\mathbf{P} \left(\omega(X_s) \leq \mathbb{P}(s) x\right) = \frac{e^{-\mathbb{P}(s) (x \ln x - x + 1)}}{\sqrt{2 \pi \mathbb{P}(s) x}} 
\frac{1}{1-e^{-|\ln x|}} \psi(h) (1 + o(1)), \quad s \searrow \kappa,
\end{gather*}
where $o(1)$ is uniformly small over $x \in (0, 1).$ 
\end{enumerate}
\end{theorem}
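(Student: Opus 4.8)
The plan is to obtain all three estimates as immediate specializations of the abstract large-deviation expansion in Theorem~\ref{sec2:th1}, taking $\phi$ to be the standard Poisson law. By Theorem~\ref{sec3:prop1}, under $\mathbf{A1}$--$\mathbf{A5}$ the family $\omega(X_s)$ converges mod-Poisson as $s\searrow\kappa$ with parameters $t_s=\mathbb{P}(s)\to+\infty$ and residue function $\psi$ given by~(\ref{psi}), at speed $o(1)$; moreover $\omega(X_s)$ is $\mathbb{Z}$-valued, so the Fourier-inversion formulas of Lemma~\ref{sec2:lem1} that underlie Theorem~\ref{sec2:th1} are available. One preliminary point: Theorem~\ref{sec2:th1} is stated for a sequence indexed by $n\to\infty$, whereas here the index is the continuous parameter $s\searrow\kappa$; as remarked just after the definition of mod-$\phi$ convergence, its proof carries over verbatim with $t_n$ replaced by $t_s$ and $n\to\infty$ by $s\searrow\kappa$, nothing in the argument using discreteness of the index.

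With that understood, it remains only to substitute the Poisson-specific quantities. Here $\eta(z)=e^z-1$, hence $\eta'(h)=\eta''(h)=e^h$; the implicit equation $\eta'(h)=x$ reads $e^h=x$, i.e. $h=\ln x$, so that $\eta''(h)=x$. The three ranges $(\eta'(-\infty),\eta'(+\infty))$, $(\eta'(0),\eta'(+\infty))$ and $(\eta'(-\infty),\eta'(0))$ are $(0,\infty)$, $(1,\infty)$ and $(0,1)$ respectively, matching the statement. The Legendre--Fenchel transform of $\eta$ was computed in Section~\ref{sec2} to be $F(x)=x\ln x-x+1$ for $x>0$. Plugging $t_s=\mathbb{P}(s)$, $\eta''(h)=x$, $h=\ln x$ and $F(x)=x\ln x-x+1$ into the three parts of Theorem~\ref{sec2:th1}, and using the simplifications $1/(1-e^{-h})=1/(1-x^{-1})$ in part~2 and $1/(1-e^{-|h|})=1/(1-e^{-|\ln x|})$ in part~3, produces exactly the three displayed formulas. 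The uniformity of the $o(1)$ remainder over the respective $x$-ranges is inherited directly from the corresponding uniformity in Theorem~\ref{sec2:th1}.

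There is essentially no genuine obstacle here beyond bookkeeping: the substantive work lies in Theorem~\ref{sec3:prop1} (mod-Poisson convergence) and in the abstract Theorem~\ref{sec2:th1}. The one point deserving a line of care is that for $x$ in the stated open intervals we stay bounded away from the excluded endpoints $0$, $1$, $+\infty$, so that $h=\ln x$ lies in the interior of the region where Theorem~\ref{sec2:th1} applies, and that $\psi(h)=\psi(\ln x)$ is finite and nonzero there — which holds because, by Theorem~\ref{sec3:prop1}, $\psi$ is holomorphic on all of $\mathbb{C}$ and nonvanishing on $\mathbb{R}$.
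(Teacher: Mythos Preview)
Your proposal is correct and matches the paper's approach exactly: the paper does not give a separate proof of this theorem but simply states, immediately before it, that one applies Theorem~\ref{sec2:th1} (with the remark that part~3 comes from part~2 applied to $-\omega(X_s)$). Your write-up spells out the Poisson-specific substitutions $\eta'(h)=\eta''(h)=e^h=x$, $F(x)=x\ln x-x+1$, and the reindexing from $n\to\infty$ to $s\searrow\kappa$, which is precisely the content the paper leaves implicit.
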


\begin{remark} \label{sec3:rem1}
Assume conditions ${\bf{A1-A5}}$ are satisfied and that $X_s$ has the Dirichlet series distribution given by (\ref{distr}). In addition, assume:
\begin{enumerate}
\item[${\bf{A6}}$] there exists real $r > 0$ such that 
$A^{-1}(s) = O((s-\kappa)^r)$, $s \searrow \kappa$.
\end{enumerate}
Then due to proof of Theorem \ref{sec3:prop1} and Lemma \ref{sec3:lem1}, for any $\vartheta > 0,$
$$|\psi_s(z) - \psi(z)| = O(s-\kappa) = o\left(\left(\mathbb{P}(s)\right)^{-\vartheta}\right), \quad s \searrow \kappa.$$  
Thus, $\omega(X_s)$ converges mod-Poisson as $s \searrow \kappa$ at speed $o\left(\left(\mathbb{P}(s)\right)^{-\vartheta}\right)$ for any $\vartheta > 0$. 
That is why we can use Taylor expansions of the functions $\eta(h)$, $\psi(h)$ and in Theorem \ref{sec3:th1} 
instead of the factor $1+o(1)$ to obtain, for any $\vartheta > 0,$ the expansion
\begin{gather*}
1 + \frac{\alpha_1}{\mathbb{P}(s)} + \dotsc + \frac{\alpha_{\vartheta-1}}{\left(\mathbb{P}(s)\right)^{\vartheta-1}} + O\left(\left(\mathbb{P}(s)\right)^{-\vartheta}\right), 
\quad s \searrow \kappa,
\end{gather*}
and for all probabilities these expansions are different.
\end{remark}

Next we establish some Berry-Esseen estimates in the setting of mod-$\phi$ convergence.

\begin{theorem} \label{sec3:th2}
Assume conditions ${\bf{A1-A5}}$ are satisfied and that $X_s$ has the Dirichlet series distribution given by (\ref{distr}).
Let $N_s$ be a Poisson random variable with parameter 
$\mathbb{P}(s)$. Then the inequality 
\begin{gather*}
\sup_{t \in \mathbb{R}} \left|\mathbf{P}\left(\frac{\omega(X_s) - \mathbb{P}(s)}{\sqrt{\mathbb{P}(s)}} \geq t\right) - 
\mathbf{P}\left(\frac{N_s - \mathbb{P}(s)}{\sqrt{\mathbb{P}(s)}} \geq t\right)\right| \leq 
\frac{\pi \mathbb{P}_2(\kappa)}{\sqrt{\mathbb{P}(s)}}
\end{gather*}
holds for all $s>\kappa.$
\end{theorem}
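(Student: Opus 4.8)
The plan is to compare the tail distribution of $\omega(X_s)$ with that of the Poisson random variable $N_s$ via the Fourier-type representation of Lemma \ref{sec2:lem1}. Both $\omega(X_s)$ and $N_s$ are $\mathbb{Z}$-valued (indeed $\mathbb{N}$-valued), with moment generating functions that are entire (for $\omega(X_s)$ this was established in the proof of Theorem \ref{sec3:prop1}; for $N_s$ it is $e^{\mathbb{P}(s)(e^z-1)}$). So for any real $h>0$ and any $k\in\mathbb Z$, Lemma \ref{sec2:lem1} gives
\begin{gather*}
\mathbf P(\omega(X_s)\ge k) - \mathbf P(N_s\ge k) = \frac{1}{2\pi}\int_{-\pi}^{\pi} \frac{e^{-k(h+iu)}}{1-e^{-(h+iu)}}\bigl(\phi_s(h+iu) - e^{\mathbb P(s)(e^{h+iu}-1)}\bigr)\,du.
\end{gather*}
Here I will be slightly careful: the statement concerns events of the form $\{(\omega(X_s)-\mathbb P(s))/\sqrt{\mathbb P(s)}\ge t\}$, which for each fixed $t$ is an event $\{\omega(X_s)\ge k\}$ with $k=k(t,s)=\lceil \mathbb P(s)+t\sqrt{\mathbb P(s)}\,\rceil$ (and similarly for $N_s$), so the supremum over $t\in\mathbb R$ reduces to a supremum over integers $k$, with the convention that for $k\le 0$ both probabilities are $1$ and the difference is $0$.

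The key step is to bound the integrand. Factor $\phi_s(z) = e^{\mathbb P(s)(e^z-1)}\psi_s(z)$ from the proof of Theorem \ref{sec3:prop1}, so that
\begin{gather*}
\phi_s(h+iu) - e^{\mathbb P(s)(e^{h+iu}-1)} = e^{\mathbb P(s)(e^{h+iu}-1)}\bigl(\psi_s(h+iu)-1\bigr).
\end{gather*}
On the imaginary axis $h=0$ one has $|e^{\mathbb P(s)(e^{iu}-1)}|=e^{\mathbb P(s)(\cos u-1)}\le 1$, and $\psi_s(iu)-1 = \prod_p(1+\alpha_p(s)(e^{iu}-1)) - 1$ after cancelling the $e^{\mathbb P(s)(e^z-1)}$ factor is not quite what I want; instead I will work directly with $\phi_s(iu)=\prod_p(1+\alpha_p(s)(e^{iu}-1))$ and $e^{\mathbb P(s)(e^{iu}-1)}=\prod_p e^{\alpha_p(s)(e^{iu}-1)}$, and use the elementary inequality $|\prod a_j - \prod b_j|\le \sum_j |a_j-b_j|\prod_{k\ne j}\max(|a_k|,|b_k|)$ together with $|1+\alpha_p(s)(e^{iu}-1)|\le 1$ (valid since $\alpha_p(s)\in[0,1)$: this is $|1-\alpha_p + \alpha_p e^{iu}|\le 1-\alpha_p+\alpha_p=1$) and $|e^{\alpha_p(s)(e^{iu}-1)}|\le 1$. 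For a single factor, a Taylor estimate gives $|1+\alpha_p(e^{iu}-1) - e^{\alpha_p(e^{iu}-1)}| \le \tfrac12 \alpha_p^2|e^{iu}-1|^2 \le \tfrac12\alpha_p^2 u^2$ using $|e^{iu}-1|\le |u|$. Summing over $p$ yields $|\phi_s(iu) - e^{\mathbb P(s)(e^{iu}-1)}|\le \tfrac12 u^2 \sum_p\alpha_p^2(s) = \tfrac12 u^2\,\mathbb P_2(s)\le \tfrac12 u^2\,\mathbb P_2(\kappa)$, where the last inequality uses that $\alpha_p(s)$ is decreasing in $s$ (since $A_p(s)$ is) so $\mathbb P_2(s)\le\mathbb P_2(\kappa)$, which is finite by \textbf{A4}.

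Putting $h=0$ in the integral representation and bounding the Poisson-weighted kernel, the factor $1/(1-e^{-(h+iu)})$ is singular at $h=0$, $u=0$, so I must instead keep $h>0$ and then send $h\searrow 0$, or — cleaner — note that $\frac{e^{-k(iu)}}{1-e^{-iu}}$ is the Abel-summed kernel and that $\frac{1}{|1-e^{-iu}|}\le \frac{\pi}{|u|\cdot 2}$... this is where the main obstacle lies: the kernel $1/(1-e^{-iu})$ is not integrable at $u=0$, so a naive $|u|^2$ bound on the difference of mgf's combined with $|u|^{-1}$ from the kernel gives an integrable $|u|$, yielding the bound $\frac{1}{2\pi}\int_{-\pi}^\pi \tfrac12\mathbb P_2(\kappa)\,|u|\,\frac{C}{|u|}\,du$ for a suitable constant $C$ controlling $|1-e^{-iu}|^{-1}\cdot|u|$ on $[-\pi,\pi]$; one has $|1-e^{-iu}| = 2|\sin(u/2)|\ge 2|u|/\pi$ for $|u|\le\pi$, hence $|u|/|1-e^{-iu}|\le \pi/2$. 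Therefore the integral is at most $\frac{1}{2\pi}\cdot\frac12\mathbb P_2(\kappa)\cdot\frac{\pi}{2}\cdot 2\pi = \frac{\pi}{2}\mathbb P_2(\kappa)$, which is off by a factor from the claimed $\pi\mathbb P_2(\kappa)/\sqrt{\mathbb P(s)}$ — so the remaining work is to (i) track the $\sqrt{\mathbb P(s)}$ in the denominator, which must come from a sharper analysis near $u=0$ (the difference of the two mgf's actually behaves like $\mathbb P_2(\kappa)u^2$ while the \emph{common} factor $e^{\mathbb P(s)(e^{iu}-1)}\approx e^{-\mathbb P(s)u^2/2}$ localizes the integral to a window $|u|\lesssim \mathbb P(s)^{-1/2}$, producing the $\mathbb P(s)^{-1/2}$ gain after the substitution $u = v/\sqrt{\mathbb P(s)}$), and (ii) pin down the constant to match $\pi$ exactly, which I expect follows from a careful Gaussian-integral computation $\int_{\mathbb R} v^2 e^{-v^2/2}\,dv = \sqrt{2\pi}$ together with the kernel bound. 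The hard part will be justifying the localization rigorously — i.e., showing the contribution of $|u|\gtrsim \mathbb P(s)^{-1/2+\varepsilon}$ is negligible using $|e^{\mathbb P(s)(e^{iu}-1)}| = e^{\mathbb P(s)(\cos u - 1)}$ and $\cos u - 1 \le -2u^2/\pi^2$ on $[-\pi,\pi]$ — and then estimating the main window by dominated convergence after rescaling; I expect the clean final constant $\pi$ comes out of this scaling analysis rather than from the crude global bound above.
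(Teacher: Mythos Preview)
Your overall strategy is sound and in the same spirit as the paper's proof, but the organization is genuinely different, and there is one point where your sketch would need to be reworked before it goes through.

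\textbf{How the paper argues.} The paper rescales from the start: it works with the characteristic function of $(\omega(X_s)-t_s)/\sqrt{t_s}$, writes $\widetilde\phi_s(iu)=e^{-iu\sqrt{t_s}}e^{t_s(e^{iu/\sqrt{t_s}}-1)}\psi_s(iu/\sqrt{t_s})$, and then Taylor-expands the two pieces separately: $e^{-iu\sqrt{t_s}}e^{t_s(e^{iu/\sqrt{t_s}}-1)}=e^{-u^2/2}(1+o(1))$ and, crucially, $\psi_s(iu/\sqrt{t_s})=1+\tfrac{u^2}{2t_s}\mathbb P_2(s)+o(1/t_s)$, using $\psi_s'(0)=0$ and $\psi_s''(0)=-\mathbb P_2(s)$. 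The paper uses the \emph{first} formula of Lemma \ref{sec2:lem1} (point probabilities) on the rescaled variable and then sums over $k$, which reproduces a kernel $1/(1-e^{-iu/\sqrt{t_s}})\sim \sqrt{t_s}/(iu)$; after cancellation this leaves $\frac{\mathbb P_2(s)}{4\pi\sqrt{t_s}}\int_{-\pi}^{\pi}e^{-iut}u\,e^{-u^2/2}\,du$, and a crude bound on the integral yields the stated constant $\pi$.

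\textbf{Where your route differs.} You apply the \emph{second} (tail) formula of Lemma \ref{sec2:lem1} directly to the unscaled variable, and you replace the Taylor expansion of $\psi_s$ by the telescoping product inequality $|\prod a_j-\prod b_j|\le\sum_j|a_j-b_j|$ (valid since $|a_j|,|b_j|\le 1$) together with $|1+w-e^w|\le\tfrac12|w|^2$. This is more elementary and has the advantage of being non-asymptotic from the outset, whereas the paper's argument is really an asymptotic computation dressed up as an inequality.

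\textbf{The point that needs care.} Your product bound gives $|\phi_s(iu)-e^{\mathbb P(s)(e^{iu}-1)}|\le\tfrac12u^2\mathbb P_2(\kappa)$, but in doing so you have \emph{discarded} the decaying factor $|e^{\mathbb P(s)(e^{iu}-1)}|=e^{\mathbb P(s)(\cos u-1)}$, so you cannot then invoke it for localization. The fix is to factor first: write the integrand's numerator as $e^{\mathbb P(s)(e^{iu}-1)}(\psi_s(iu)-1)$ with $\psi_s(iu)=\prod_p(1+\alpha_p(e^{iu}-1))e^{-\alpha_p(e^{iu}-1)}$, bound $|\psi_s(iu)-1|$ by your telescoping argument applied to these factors (each has modulus bounded uniformly, and $|(1+w)e^{-w}-1|\le C|w|^2$), and keep $e^{\mathbb P(s)(\cos u-1)}$ as a separate multiplicative factor. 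Then the substitution $u=v/\sqrt{\mathbb P(s)}$ and the kernel estimate $|1-e^{-iu}|\ge 2|u|/\pi$ do indeed produce the $\mathbb P(s)^{-1/2}$ and a Gaussian integral of the right size. Once you separate the two ingredients this way, your approach carries through; as written, the product bound and the localization are in tension.
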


\begin{proof}
Denote $t_s = \mathbb{P}(s)$ as in Theorem \ref{sec3:prop1}.
Noting that 
\begin{gather*}
\mathbf{P}\left(\frac{\omega(X_s) - t_s}{\sqrt{t_s}} \geq t\right) =
\sum_{k=t}^{\infty} \mathbf{P}\left(\frac{\omega(X_s) - t_s}{\sqrt{t_s}} = \frac{[t_s + k \sqrt{t_s}]}{\sqrt{t_s}} - \sqrt{t_s}\right),
\end{gather*}
then using Lemma \ref{sec2:lem1} we obtain
\begin{gather} \label{sec3:eq1}
\mathbf{P}\left(\frac{\omega(X_s) - t_s}{\sqrt{t_s}} \geq t\right) =
\frac{1}{2 \pi} \sum_{k=t}^{\infty} \int_{-\pi}^{\pi} 
e^{-i u \left(\frac{[t_s + k \sqrt{t_s}]}{\sqrt{t_s}} - \sqrt{t_s}\right)} \widetilde \phi_s(iu) du.
\end{gather}

Using the mod-Poisson convergence of $\omega(X_s)$, $s \searrow \kappa$, and the notation $\psi_s(z)$ from the proof of Theorem 
\ref{sec3:prop1} we have 
\begin{gather*}
\widetilde \phi_s(z) \equiv \mathbf{E} e^{z \frac{\omega(X_s) - t_s}{\sqrt{t_s}}} =
e^{- z \sqrt{t_s}} \mathbf{E} e^{\frac{z}{\sqrt{t_s}} \omega(X_s)} =
e^{- z \sqrt{t_s}} e^{t_s \left(e^{z/\sqrt{t_s}} - 1\right)} 
\psi_s \left(z/\sqrt{t_s}\right),
\end{gather*}
where
\begin{gather*}
\psi_s (z) = \exp \left(-t_s (e^z - 1) + \sum_{p \in \mathcal{P}} \ln \left(1 + \alpha_p(s) (e^z - 1)\right)\right).
\end{gather*}
Substituting this in  (\ref{sec3:eq1}) we get
\begin{gather} \label{sec3:eq2}
\mathbf{P}\left(\frac{\omega(X_s) - t_s}{\sqrt{t_s}} \geq t\right) = \\
\nonumber
 \frac{1}{2 \pi} \sum_{k=t}^{\infty} \int_{-\pi}^{\pi} 
e^{-i u \left(\frac{[t_s + k \sqrt{t_s}]}{\sqrt{t_s}} - \sqrt{t_s}\right)} 
e^{- i u \sqrt{t_s}} e^{t_s \left(e^{i u/\sqrt{t_s}} - 1\right)} 
\psi_s \left(i u/\sqrt{t_s}\right) du.
\end{gather}

Let us use the following Taylor expansions as $s \searrow \kappa$
\begin{eqnarray*}
e^{- i u \sqrt{t_s}} e^{t_s \left(e^{i u/\sqrt{t_s}} - 1\right)} =&
e^{- i u \sqrt{t_s}} e^{t_s \left(i u/\sqrt{t_s} - u^2/(2 t_s) + o(1/t_s)\right)}\\
 =& e^{-u^2/2} (1+o(1)),
 \end{eqnarray*} 
 and
 \begin{eqnarray*}
\psi_s \left(i u/\sqrt{t_s}\right) =& \psi_s (0) + 
\frac{i u}{\sqrt{t_s}} \psi'_s (0) - \frac{u^2}{2t_s} \psi''_s (0) + o\left(\frac{u^2}{2 t_s}\right)\\ 
=& 1 + \frac{u^2}{2 t_s} \mathbb{P}_2(s) + o\left(\frac{1}{t_s}\right),
\end{eqnarray*}
where the second equality is true because 
\begin{gather*}
\psi'_s (z) = \psi_s (z) e^z \left(-t_s + \sum_{p \in \mathcal{P}} \frac{A_p(s)}{1 + A_p(s) e^z}\right).
\end{gather*}
Thus, we see that $\psi'_s (0) =0$ and
\begin{gather*}
\psi''_s (z) = \psi_s (z) e^{2 z} \left(\left(-t_s + \sum_{p \in \mathcal{P}} \frac{A_p(s)}{1 + A_p(s) e^z}\right)^2 \right. + \\
+ \left. e^{-z} \left(-t_s + \sum_{p \in \mathcal{P}} \frac{A_p(s)}{1 + A_p(s) e^z}\right) - 
\sum_{p \in \mathcal{P}} \left(\frac{A_p(s)}{1 + A_p(s) e^z}\right)^2\right).
\end{gather*}
Hence, from equation (\ref{sec3:eq2}) we obtain 
\begin{gather*}
\mathbf{P}\left(\frac{\omega(X_s) - t_s}{\sqrt{t_s}} \geq t\right) - 
\mathbf{P}\left(\frac{N_s - t_s}{\sqrt{t_s}} \geq t\right) = \\ =
 \frac{\mathbb{P}_2(s) (1 + o(1))}{4 \pi t_s} \sum_{k=t}^{\infty} 
\int_{-\pi}^{\pi} e^{-i u \left(\frac{[t_s + k \sqrt{t_s}]}{\sqrt{t_s}} - \sqrt{t_s}\right)} u^2 e^{-u^2/2} du\\
= \frac{\mathbb{P}_2(s) (1 + o(1))}{4 \pi t_s} 
\int_{-\pi}^{\pi} \frac{e^{-i u t}}{1 - e^{-iu/\sqrt{t_s}}} u^2 e^{-u^2/2} du\\ 
=
\frac{\mathbb{P}_2(s) (1 + o(1))}{4 \pi \sqrt{t_s}} 
\int_{-\pi}^{\pi} e^{-i u t} e^{-u^2/2} u \,du.
\end{gather*}
Therefore, when $s$ is large enough,
\begin{gather*}
\left|\mathbf{P}\left(\frac{\omega(X_s) - t_s}{\sqrt{t_s}} \geq t\right) - \mathbf{P}\left(\frac{N_s - t_s}{\sqrt{t_s}} \geq t\right)\right| \leq
\frac{\mathbb{P}_2(s)}{2 \pi \sqrt{t_s}} \left|\int_{-\pi}^{\pi} e^{-i u t} e^{-u^2/2} u \, du\right| \leq 
\frac{\pi \mathbb{P}_2(\kappa)}{\sqrt{t_s}}.
\end{gather*}

Similarly we can consider case when 
\begin{gather*}
\mathbf{P}\left(\frac{\omega(X_s) - t_s}{\sqrt{t_s}} \geq t\right) =
\sum_{k=t+1}^{\infty} \mathbf{P}\left(\frac{\omega(X_s) - t_s}{\sqrt{t_s}} = \frac{[t_s + k \sqrt{t_s}]}{\sqrt{t_s}} - \sqrt{t_s}\right)
\end{gather*}
and obtain the analogous result.

Theorem \ref{sec3:th2} is proved.
\end{proof}

We can combine the results of Theorems 
\ref{sec3:th1} and \ref{sec3:th2} and obtain the moderate deviation probabilities for $\omega(X_s)$.

\begin{theorem} \label{sec3:th3}
Assume conditions ${\bf{A1-A5}}$ are satisfied and that $X_s$ has the Dirichlet series distribution given by (\ref{distr}). Let 
$N_s$ be a Poisson random variable with parameter 
$\mathbb{P}(s)$. Then
\begin{gather*}
\mathbf{P}\left(\omega(X_s) \geq \mathbb{P}(s) + \sqrt{\mathbb{P}(s)} y\right) = 
\mathbf{P}\left(N_s \geq \mathbb{P}(s) + \sqrt{\mathbb{P}(s)} y\right) (1 + o(1)), \quad s \searrow \kappa,
\end{gather*}
where $o(1)$ is uniformly small over 
$y = o\left(\sqrt{\mathbb{P}(s)}\right)$, $s \searrow \kappa$.

Let the function $\psi(\cdot)$ be as defined at (\ref{psi}) in Theorem \ref{sec3:prop1}.
If $x = 1 + y / \sqrt{\mathbb{P}(s)}$ 
and $h$ is the solution of $e^h = x$, then
\begin{gather*}
\mathbf{P}\left(\omega(X_s) \geq \mathbb{P}(s) + \sqrt{\mathbb{P}(s)} y\right) = 
\mathbf{P}\left(N_s \geq \mathbb{P}(s) + \sqrt{\mathbb{P}(s)} y\right) \psi(h) (1 + o(1)), \quad s \searrow \kappa, 
\end{gather*}
where $o(1)$ is uniformly small over $y \gg 1$ and 
$y = O\left(\sqrt{\mathbb{P}(s)}\right)$, $s \searrow \kappa$, $x > 1$, and 
\begin{gather*}
\mathbf{P}\left(\omega(X_s) \leq \mathbb{P}(s) + \sqrt{\mathbb{P}(s)} y\right) = 
\mathbf{P}\left(N_s \leq \mathbb{P}(s) + \sqrt{\mathbb{P}(s)} y\right) \psi(h) (1 + o(1)), \quad s \searrow \kappa, 
\end{gather*}
where $o(1)$ is uniformly small over $y \ll 0$ and 
$y = O\left(\sqrt{\mathbb{P}(s)}\right)$, $s \searrow \kappa$, $x \in (0, 1)$.
\end{theorem}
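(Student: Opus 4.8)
The plan is to split the rescaled deviation range into two overlapping pieces — the central/moderate window $y=o(\sqrt{\mathbb{P}(s)})$ and the genuinely large window $y\asymp\sqrt{\mathbb{P}(s)}$ — and to paste the resulting estimates together. The key preliminary observation is that the Poisson variable $N_s$ is itself mod-Poisson convergent with parameters $t_s=\mathbb{P}(s)$ and residue function $\psi\equiv 1$, at arbitrarily fast speed, since $\mathbf{E}e^{zN_s}=e^{\mathbb{P}(s)(e^z-1)}$ holds identically; hence every conclusion of Theorem~\ref{sec2:th1} is available for $N_s$ with $\psi$ replaced by $1$, while by Theorem~\ref{sec3:prop1} the same conclusions hold for $\omega(X_s)$ with the function $\psi$ from~(\ref{psi}).

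\emph{Large window.} For $x$ in a fixed compact subinterval of $(1,\infty)$, equivalently $y\asymp\sqrt{\mathbb{P}(s)}$, I would divide the estimate of item~2 of Theorem~\ref{sec3:th1} for $\omega(X_s)$ by the same estimate applied to $N_s$: the entire prefactor $e^{-\mathbb{P}(s)(x\ln x-x+1)}\big/\big(\sqrt{2\pi\mathbb{P}(s)x}\,(1-x^{-1})\big)$ cancels (note $\eta''(h)=e^h=x$ and $F(x)=x\ln x-x+1$ are the same for both), and one is left with $\mathbf{P}(\omega(X_s)\ge\mathbb{P}(s)x)=\mathbf{P}(N_s\ge\mathbb{P}(s)x)\,\psi(h)(1+o(1))$, which is the second displayed formula on this part of the range; a routine rounding of $\mathbb{P}(s)x$ to the nearest integer absorbs the constraint $\mathbb{P}(s)x\in\mathbb{N}$. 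The lower-tail (third) formula follows in the same way from item~3, or by applying the argument to $-\omega(X_s)$ as in the remark preceding Theorem~\ref{sec3:th1}.

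\emph{Moderate window.} Now take $1\ll y=o(\sqrt{\mathbb{P}(s)})$, so that $x=x_s=1+y/\sqrt{\mathbb{P}(s)}\searrow 1$ and $h_s=\ln x_s\to 0$. Here I would use the moderate-deviations refinement of Theorem~\ref{sec2:th1}, obtained by re-running its Fourier-inversion proof (in the form already carried out in the proof of Theorem~\ref{sec3:th2}) but recentred at the moving saddle point $h_s$ via the Cram\'er change of measure. Because the mod-Poisson convergence of $\omega(X_s)$ holds at speed $o(1)$, and $\psi$ is continuous and non-vanishing in a neighbourhood of $0$ with $\psi(0)=1$, the residue contributes a factor $\psi(h_s)(1+o(1))$ uniformly, while the Gaussian-type main term is identical to the one produced for $N_s$; this gives $\mathbf{P}(\omega(X_s)\ge\mathbb{P}(s)x_s)=\mathbf{P}(N_s\ge\mathbb{P}(s)x_s)\,\psi(h_s)(1+o(1))$, and likewise for the lower tail. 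Since $\psi'(0)=0$ (as computed in the proof of Theorem~\ref{sec3:th2}), $\psi(h_s)=1+O(h_s^2)=1+O(y^2/\mathbb{P}(s))=1+o(1)$ on this range, so here the statement collapses to the first displayed formula; together with the large window this also yields the second and third formulas for all $1\ll y=O(\sqrt{\mathbb{P}(s)})$.

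\emph{CLT window and assembly.} For $y$ in a bounded set the first displayed formula is immediate from Theorem~\ref{sec3:th2}: its additive error is $O(1/\sqrt{\mathbb{P}(s)})$, whereas for bounded $y$ the reference probability $\mathbf{P}(N_s\ge\mathbb{P}(s)+\sqrt{\mathbb{P}(s)}\,y)$ tends to the positive number $\frac{1}{\sqrt{2\pi}}\int_y^{\infty}e^{-u^2/2}\,du$ and stays bounded away from $0$, so the error is $o(1)$ relative to it. Merging the bounded-$y$ window with the moderate window covers every $y=o(\sqrt{\mathbb{P}(s)})$, which is the first formula. I expect the genuine obstacle to be the moderate window: showing that the $o(1)$ in the tail estimate of Theorem~\ref{sec2:th1} stays uniform when the deviation point $x$ is allowed to depend on $s$ and to slide down to the endpoint $\eta'(0)=1$ of the effective domain at the rate $y/\sqrt{\mathbb{P}(s)}$ with $y\to\infty$ — this is exactly where the quantitative speed of the mod-Poisson convergence, and not merely its validity, is needed.
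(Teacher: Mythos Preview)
Your decomposition into the three windows (bounded $y$, $1\ll y=o(\sqrt{\mathbb{P}(s)})$, $y\asymp\sqrt{\mathbb{P}(s)}$) and your device of applying the large-deviation asymptotic simultaneously to $\omega(X_s)$ and to $N_s$ (the latter being trivially mod-Poisson with residue $1$) so that the common prefactor cancels --- this is exactly the paper's approach, and your handling of the bounded-$y$ window via Theorem~\ref{sec3:th2} and of the large window via Theorem~\ref{sec3:th1} matches the paper's cases~1 and~4.

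The one place where you diverge is the moderate window. You propose to re-run the Fourier/Cram\'er argument from scratch at the moving saddle $h_s$, and you flag the uniformity of the $o(1)$ as $x\searrow 1$ as the ``genuine obstacle.'' The paper does not do this extra work: it simply invokes Theorem~\ref{sec3:th1} item~2 again in this range, relying on the fact that the $o(1)$ there is already stated to be uniform over all $x>1$ (inherited from Theorem~\ref{sec2:th1}). Thus the same cancellation argument you use for the large window applies verbatim, yielding $\mathbf{P}(\omega(X_s)\ge\mathbb{P}(s)x)=\mathbf{P}(N_s\ge\mathbb{P}(s)x)\,\psi(h)(1+o(1))$, and the remaining task is only to show $\psi(h_s)\to 1$. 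The paper does this via the explicit series $\ln\psi(h)=-\sum_{n\ge 2}\mathbb{P}_n(\kappa)(1-x)^n/n$, bounding it by $(y^2/\mathbb{P}(s))\sum_{n\ge 2}\mathbb{P}_n(\kappa)/n$ and checking the last sum converges by~{\bf A3}--{\bf A4}; this is equivalent to your Taylor observation $\psi'(0)=0$. So your proof is correct but carries more machinery in the moderate window than is needed --- the anticipated obstacle is already absorbed into the uniformity claim of Theorem~\ref{sec3:th1}.
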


\begin{proof}
We break up the cases depending on $y$.
\begin{enumerate}
\item Let $y = O(1)$. In this case, Theorem \ref{sec3:th2} is satisfied and implies the result. 

\item Let $y = o\left(\sqrt{\mathbb{P}(s)}\right)$, $s \searrow \kappa$, 
and $y \gg 1$. Denote $x = 1 + y/\sqrt{\mathbb{P}(s)}$ and 
let $h$ be the solution of implicit equation $e^h = x$. 
Then according to Cramer's transform of $N_s$ and Theorem \ref{sec3:th1} we obtain the following equalities:
\begin{gather*}
\mathbf{P}\left(\omega(X_s) \geq \mathbb{P}(s) + \sqrt{\mathbb{P}(s)} y\right) = 
\mathbf{P} \left(\omega(X_s) \geq \mathbb{P}(s) x\right) \\
= \frac{e^{-\mathbb{P}(s) (x \ln x - x + 1)}}{\sqrt{2 \pi \mathbb{P}(s) x}} 
\frac{1}{1-x}
\exp \left(-\sum_{n=2}^{\infty} \mathbb{P}_n(\kappa) \frac{(1-x)^n}{n}\right) (1 + o(1)) \\
= \mathbf{P} \left(N_s \geq \mathbb{P}(s) x\right) 
\exp \left(-\sum_{n=2}^{\infty} \mathbb{P}_n(\kappa) \frac{\left(-y/\sqrt{\mathbb{P}(s)}\right)^n}{n}\right)
(1 + o(1)) \\
= \mathbf{P}\left(N_s \geq \mathbb{P}(s) + \sqrt{\mathbb{P}(s)} y\right) (1 + o(1)), \quad s \searrow \kappa.
\end{gather*}
The last equality is true because when $s$ is large enough
\begin{gather*}
\sum_{n=2}^{\infty} \mathbb{P}_n(\kappa) \frac{\left(-y/\sqrt{\mathbb{P}(s)}\right)^n}{n} < \frac{y^2}{\mathbb{P}(s)} \sum_{n=2}^{\infty} \frac{\mathbb{P}_n(\kappa)}{n},
\end{gather*}
where the series $\sum_{n=2}^{\infty} \mathbb{P}_n(\kappa) / n$ converges due to condition ${\bf{A3}}$ there exists $l \geq 1$ such that $b(p_l) > 1$, then we obtain
\begin{gather*}
\sum_{n=2}^{\infty} \frac{\mathbb{P}_n(\kappa)}{n} = \sum_{n=2}^{\infty} \frac{1}{n} 
\left(\sum_{i=1}^l\alpha_{p_i}(\kappa)^n+ \sum_{i>l} \alpha_{p_i}(\kappa)^n\right) \\
\leq \sum_{n=2}^{\infty} \frac{1}{n} 
\left(\sum_{i=1}^l\alpha_{p_i}(\kappa)^n + \frac{C(n, l)}{b(p_l)^n}\right),
\end{gather*}
where according to ${\bf{A4}}$ the series on the right side of the inequality converges.

\item Let $y = o\left(\sqrt{\mathbb{P}(s)}\right)$, $s \searrow \kappa$, 
and $y \ll 0$. Similarly to the previous case, as $s$ tends to $\kappa$ we obtain
\begin{eqnarray*}
\mathbf{P}\left(\omega(X_s) \geq \mathbb{P}(s) + \sqrt{\mathbb{P}(s)} y\right) =& 1 - \mathbf{P}\left(\omega(X_s) < \mathbb{P}(s) + \sqrt{\mathbb{P}(s)} y\right) \\
= &1 - \mathbf{P}\left(N_s < \mathbb{P}(s) + \sqrt{\mathbb{P}(s)} y\right) (1 + o(1))\\
 =&\mathbf{P}\left(N_s \geq \mathbb{P}(s) + \sqrt{\mathbb{P}(s)} y\right) (1 + o(1)).
\end{eqnarray*}

\item Let $y = O\left(\sqrt{\mathbb{P}(s)}\right)$, $s \searrow \kappa$. 
In this case Theorem \ref{sec3:th1} about the large deviation probabilities is satisfied.
\end{enumerate}

Theorem \ref{sec3:th3} is proved.
\end{proof}

\section{Number of prime divisors of $X_s$, counted with multiplicities} \label{sec4}

In this section we consider the random variables $\Omega(X_s)$, where 
$X_s$, $s > \kappa$,
has the Dirichlet series distribution at (\ref{distr}) and conditions ${\bf{A1-A4}}$ are satisfied
together with  the additional condition:
\begin{enumerate}
\item[${\bf{A0}}$] The functions $a(n)$ and $b(n)$ are completely multiplicative.
\end{enumerate}
Note that due to conditions ${\bf{A3}}$ and ${\bf{A0}}$ we have $\sup_{p \in \mathcal{P}} a(p)/b(p)^{\kappa} < 1$ and\\ $\lim_{i \to \infty} a(p_i)/b(p_i)^{\kappa} = 0$.

The following relations hold for any $p \in \mathcal{P}$ and $n \in \mathbb{N}$:
\begin{gather*}
A_{p^{n}}(s) = \sum_{m=n}^{\infty} \left(\frac{a(p)}{b(p)^{s}}\right)^m = \frac{1}{1 - a(p)/(b(p))^s} \left(\frac{a(p)}{b(p)^{s}}\right)^{n}, \\
\mathbf{P}\left(p^{n} | X_s\right) = \frac{A_{p^{n}}(s)}{1+A_p(s)} = \left(\frac{a(p)}{b(p)^{s}}\right)^{n} = \left(\alpha_p(s)\right)^{n},
\end{gather*}
where function $A_{p^{n}}(s)$, $n \in \mathbb{N}$, is defined by (\ref{func_Ap}).

Let $d = \inf_{p \in \mathcal{P}} b(p)^{\kappa}/a(p) > 1$ and assume the following condition is also met:
\begin{enumerate}
\item[${\bf{\widetilde{{\bf{A5}}}}}$] There exists a positive function $\widehat C(n)$ such that $\limsup_{n \to \infty} \widehat C(n)^{1/n} \leq 1$ and such that for any $n \geq 2$ 
\begin{eqnarray*}\label{tildeA5}
\sum_{p \in \mathcal{P}} \left(\frac{a(p)}{b(p)^{\kappa}}\right)^{n} \ln b(p) \leq \frac{\widehat C(n)}{d^n}.
\end{eqnarray*}
\end{enumerate}

First we calculate the mathematical expectation  of $\Omega(X_s)=\sum_p\sum_{n=1}^\infty I_{p^n|X_s}.$
\begin{eqnarray*}
\begin{split}
\mathbf{E} \Omega(X_s) =& \mathbf{E} \sum_{p \in \mathcal{P}} \sum_{n=1}^{\infty} I_{p^{n} | X_s}\\ 
=& \sum_{p \in \mathcal{P}} \sum_{n=1}^{\infty} \left(\frac{a(p)}{b(p)^{s}}\right)^n = \sum_{n=1}^{\infty} \mathbb{P}_{n}(s)\\ 
=&\mathbb{P}(s) + \sum_{n=2}^{\infty} \mathbb{P}_{n}(s), 
\end{split}
\end{eqnarray*}
and the variance of $\Omega(X_s),$
\begin{eqnarray*}
\begin{split}
\mathbf{D} \Omega(X_s) =& \mathbf{E} \Omega^2 (X_s) - \left(\mathbf{E} \Omega(X_s)\right)^2\\ 
=& \mathbb{P}(s) + \sum_{n=2}^{\infty} n \mathbb{P}_{n}(s) -
\sum_{n \neq m}^{\infty} \mathbb{P}_{n+m}(s),
\end{split}
\end{eqnarray*}
where the function $\mathbb{P}_n(s)$, $n \in \mathbb{N}$, is defined by relation (\ref{func_Pn}).

Notice that 
\begin{gather*}
\sum_{n \neq m}^{\infty} \mathbb{P}_{n+m}(s) \leq \left(\sum_{n=2}^{\infty} \mathbb{P}_{n}(s)\right)^2 + \sum_{n=2}^{\infty} \mathbb{P}_{n}(s).
\end{gather*}
Thus, according to ${\bf{A4}},$ as $s \searrow \kappa$
\begin{gather*}
\mathbf{E} \Omega(X_s) = \mathbb{P}(s) + O(1), \quad
\mathbf{D} \Omega(X_s) = \mathbb{P}(s) + O(1).
\end{gather*}

We have the following result about mod-$\phi$ convergence for $\Omega(X_s).$
\begin{theorem} \label{sec4:prop1}
Assume conditions ${\bf{A0-A4}}$ and $\widetilde{{\bf{A5}}}$ are satisfied.
Then mod-Poisson  convergence takes place for the random variables $\Omega(X_s)$, $s > \kappa$, as $s \searrow \kappa$
on $\mathcal{S}_{(-\infty,\,\, \ln d)}$ with parameters 
$t_s = \sum_{n=1}^{\infty} \mathbb{P}_n(s)$ and residue function 
\begin{gather}\label{psi2}
\psi (z) = \exp \left(\sum_{n=2}^{\infty} \mathbb{P}_n(\kappa) \left(\frac{e^{n z} - 1}{n} - (e^z - 1)\right)\right).
\end{gather}
The mod-Poisson convergence is at speed $o(1)$,
$s \searrow \kappa$.
\end{theorem}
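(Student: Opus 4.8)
The plan is to compute the moment generating function $\phi_s(z)=\mathbf{E}e^{z\Omega(X_s)}$ in closed form, peel off the Poisson factor $e^{t_s(e^z-1)}$, and verify that the remaining factor converges to the right-hand side of (\ref{psi2}).

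First I would use condition ${\bf{A0}}$: since $a$ and $b$ are completely multiplicative and $\Omega$ is completely additive, the arithmetic function $n\mapsto a(n)e^{z\Omega(n)}/b(n)^{s}$ is completely multiplicative, and expanding it into an Euler product gives, for $s>\kappa$ and $z\in\mathcal S_{(-\infty,\ln d)}$,
\begin{gather*}
\phi_s(z)=\frac{1}{A(s)}\sum_{n\in R\setminus\{0\}}\frac{a(n)e^{z\Omega(n)}}{b(n)^{s}}=\prod_{p\in\mathcal P}\frac{1-\alpha_p(s)}{1-\alpha_p(s)e^{z}},
\end{gather*}
using $A(s)=\prod_p(1+A_p(s))=\prod_p(1-\alpha_p(s))^{-1}$ and $\alpha_p(s)=a(p)/b(p)^s\le\alpha_p(\kappa)\le 1/d<1$ on this strip, which secures absolute convergence. (Equivalently one may write $\Omega(X_s)=\sum_p v_p$, where $v_p$ is the exponent of $p$ in $X_s$, observe that the $v_p$ are independent geometric variables with $\mathbf{P}(v_p\ge m)=\alpha_p(s)^m$ by the factorization computed in the introduction, and multiply their generating functions.) Taking logarithms and expanding $-\ln(1-x)=\sum_{m\ge1}x^m/m$ termwise yields $\ln\phi_s(z)=\sum_{m\ge1}m^{-1}\mathbb{P}_m(s)(e^{mz}-1)$, so with $t_s:=\mathbf{E}\Omega(X_s)=\sum_{m\ge1}\mathbb{P}_m(s)$ the $m=1$ term in $\ln\phi_s(z)-t_s(e^z-1)$ cancels exactly and one is left with
\begin{gather*}
\psi_s(z):=e^{-t_s(e^z-1)}\phi_s(z)=\exp\Big(\sum_{m\ge2}\mathbb{P}_m(s)\big(\tfrac{e^{mz}-1}{m}-(e^z-1)\big)\Big),
\end{gather*}
which is (\ref{psi2}) with $s$ in place of $\kappa$. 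Here $t_s\ge\mathbb{P}_1(s)=\mathbb{P}(s)\asymp\ln A(s)\to+\infty$ as $s\searrow\kappa$ by Lemma \ref{sec3:lem1} and ${\bf{A2}}$, so $t_s$ is an admissible family of parameters.

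It then remains to check that (i) the series $G_s(z):=\sum_{m\ge2}\mathbb{P}_m(s)\big(\frac{e^{mz}-1}{m}-(e^z-1)\big)$ converges locally uniformly on $\mathcal S_{(-\infty,\ln d)}$, uniformly in $s\ge\kappa$, and (ii) $G_s\to G_\kappa$ locally uniformly as $s\searrow\kappa$. For (i) I would fix a compact $K\subset\mathcal S_{(-\infty,\ln d)}$, pick $\delta>0$ with $\Re z\le\ln d-\delta$ on $K$ so that $|e^{mz}|\le(de^{-\delta})^m$; since $\alpha_p(s)$ decreases in $s$ and $\alpha_p(\kappa)\le 1/d$, one has $\mathbb{P}_m(s)\le\mathbb{P}_m(\kappa)\le d^2\mathbb{P}_2(\kappa)d^{-m}$ with $\mathbb{P}_2(\kappa)<\infty$ by ${\bf{A4}}$, whence the $m$-th term of $G_s$ is bounded on $K$ by a constant times $e^{-m\delta}/m+d^{-m}$, a convergent series independent of $s$. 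The Weierstrass $M$-test then makes $G_s$ holomorphic on the strip, so $\psi_s=e^{G_s}$ is holomorphic and nowhere vanishing, in particular nonvanishing on the real axis. For (ii), monotone convergence gives $\mathbb{P}_m(s)\uparrow\mathbb{P}_m(\kappa)$ for each $m\ge2$ as $s\searrow\kappa$, and together with the $s$-uniform domination from (i) this forces $G_s\to G_\kappa$ uniformly on every compact, hence $\psi_s\to\psi$ locally uniformly. This is mod-Poisson convergence with parameters $t_s$ and residue function (\ref{psi2}), and the convergence being $o(1)$ on compacts is the stated speed.

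The step I expect to be delicate is the uniformity in (i): the domination must hold simultaneously for every $s\ge\kappa$ and right up to the edge $\Re z=\ln d$ of the strip, and this works precisely because the geometric decay $\mathbb{P}_m(\kappa)=O(d^{-m})$ — coming from $\alpha_p(\kappa)\le 1/d$, i.e.\ from ${\bf{A0}}$ and ${\bf{A3}}$ — just beats the growth $|e^{mz}|<d^m$; the rest is routine estimation. Condition $\widetilde{{\bf{A5}}}$ is not needed for $o(1)$ speed, but I would invoke it to obtain a rate, in parallel with Theorem \ref{sec3:prop1}: the mean value theorem gives $0\le\alpha_p(\kappa)^m-\alpha_p(s)^m\le m(s-\kappa)\alpha_p(\kappa)^m\ln b(p)$, and $\widetilde{{\bf{A5}}}$ bounds $\sum_p\alpha_p(\kappa)^m\ln b(p)\le\widehat C(m)d^{-m}$, which upgrades (ii) to $|\psi_s(z)-\psi(z)|=O(s-\kappa)$ on compacts.
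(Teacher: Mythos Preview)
Your argument is correct and follows the same skeleton as the paper: compute $\phi_s(z)$ as an Euler product, expand $\ln\phi_s(z)=\sum_{m\ge1}\mathbb{P}_m(s)(e^{mz}-1)/m$, and subtract $t_s(e^z-1)$ to isolate $\psi_s$. The differences are in the technical verifications. For convergence of the series defining $\ln\psi_s$, the paper splits off finitely many primes and invokes the full strength of ${\bf{A4}}$ with the constants $C(n,l)$, whereas you use the single clean bound $\mathbb{P}_m(\kappa)\le d^{2}\mathbb{P}_2(\kappa)\,d^{-m}$ coming directly from $\alpha_p(\kappa)\le 1/d$; this is simpler and already gives the right geometric decay against $|e^{mz}|<d^{m}$ on the strip. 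For the convergence $\psi_s\to\psi$, the paper differentiates in $s$ and uses $\widetilde{{\bf{A5}}}$ to control $f_s'(\kappa,z)$, obtaining $O(s-\kappa)$ and hence $o(1)$; you instead get the bare $o(1)$ by dominated convergence (using only ${\bf{A0}}$--${\bf{A4}}$) and then invoke $\widetilde{{\bf{A5}}}$ separately via the mean value theorem to upgrade to $O(s-\kappa)$. Your observation that $\widetilde{{\bf{A5}}}$ is unnecessary for the $o(1)$ conclusion as stated is correct and worth noting: the paper's route reaches $o(1)$ only by passing through the stronger $O(s-\kappa)$ estimate.
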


\begin{proof}
Using the independence of the events $p^n | X_s$ and $q^m | X_s$ for any irreducible elements $p \neq q$ and taking arbitrary $n, m \in \mathbb{N}$, we get the following: 
\begin{eqnarray*}
\phi_s (z) =& \mathbf{E} e^{z \Omega(X_s)}\\ 
= & \prod_p \mathbf{E} [e^{z\sum_n I_{p^n|X_s}}]\\
=&\prod_p \left(\sum_{m=0}^\infty e^{zm}\mathbf{P}(\sum_{n=1}^\infty I_{p^n|X_s}=m)\right)\\
=&\prod_p \sum_{m=0}^\infty e^{zm}\frac{a(p)}{b(p)^s}(1-\frac{a(p)}{b(p)^s})^{-1}\\
=&\prod_{p \in \mathcal{P}} \left(1 - \frac{a(p)}{b(p)^s}\right)^{-1} 
\frac{1}{(1 - e^z a(p)/(b(p)^s)}.
\end{eqnarray*}

Notice that in the domain $|e^z| < d$ we get
\begin{eqnarray*}
\phi_s (z) =& \exp \left(\sum_{n=1}^{\infty} \mathbb{P}_n(s) \frac{e^{n z} - 1}{n}\right) \\
= &\exp \left((e^z - 1) \sum_{n=1}^{\infty} \mathbb{P}_n(s) + \sum_{n=2}^{\infty} \mathbb{P}_n(s) \left(\frac{e^{n z} - 1}{n} - (e^z - 1)\right)\right)\\ 
= &e^{(e^z - 1) \sum_{n=1}^{\infty} \mathbb{P}_n(s)} \psi_s (z),
\end{eqnarray*}
where $\eta(z) = e^z - 1$ corresponds to the standard Poisson distribution. The series for $\ln \psi_s(z)$ converges for any 
$s \geq \kappa$ since owing to condition ${\bf{A3}}$ there exists $l \geq 1$ such that $b(p_l) > d$ and then due to condition ${\bf{A4}}$ we get
\begin{gather*}
\sum_{n=2}^{\infty} \frac{\mathbb{P}_n(s) \, e^{n z}}{n} = \sum_{n=2}^{\infty} \frac{e^{n z}}{n} 
\left(\left(\frac{a(p_1)}{b(p_1)^s}\right)^n + \dotsb + \left(\frac{a(p_l)}{b(p_l)^s}\right)^n + \sum_{i>l} \left(\frac{a(p_i)}{b(p_i)^s}\right)^n\right) \\
\leq \sum_{n=2}^{\infty} \frac{1}{n} 
\left(\left(\frac{e^z a(p_1)}{b(p_1)^s}\right)^n + \dotsb + \left(\frac{e^z a(p_l)}{b(p_l)^s}\right)^n + C(n, l) \, \left(\frac{e^z a(p_l)}{b(p_l)}\right)^n\right),
\end{gather*}
where according to ${\bf{A4}}$ the series on the right side of the inequality converges.
Therefore, we have obtained that 
$\Omega(X_s)$ converges mod-Poisson  on the strip 
$\mathcal{S}_{(-\infty, \ln d)}$ with parameters 
$t_s \equiv \sum_{n=1}^{\infty} \mathbb{P}_n(s)$ and residue function $\psi(z) \equiv \psi_{\kappa}(z).$ 
This function is well defined on this strip and does not vanish on the real part.  

Let us consider speed of mod-Poisson convergence. 
For any $z \in \mathcal{S}_{(-\infty, \ln d)}$ 
we have
\begin{gather*}
|\psi_s(z) - \psi(z)| = |\psi(z) (f(s, z) - 1)| =
|\psi(z) f_s'(\kappa, z)| (s-\kappa) + o(s-\kappa), \quad s \searrow \kappa,
\end{gather*}
where
\begin{gather*}
f(s, z) = \exp \left(\sum_{n=2}^{\infty} \left(\mathbb{P}_n(s) - \mathbb{P}_n(\kappa)\right) \left(\frac{e^{n z} - 1}{n} - (e^z - 1)\right)\right), \quad f(\kappa, z) = 1.
\end{gather*}
The derivative of $f(s, z)$ by $s$ is equal to
\begin{gather*}
f'_s(s, z) = f(s, z) \sum_{n=2}^{\infty} \left(\sum_{p \in \mathcal P} \frac{- n \, a^n(p) \, \ln b(p)}{b(p)^{n s}}\right) 
\left(\frac{e^{n z} - 1}{n} - (e^z - 1)\right).
\end{gather*}
Hence, if $s = \kappa$ then
\begin{gather*}
f'_s(\kappa, z) = - \sum_{n=2}^{\infty} \left(\sum_{p \in \mathcal P} \frac{a^n(p) \, \ln b(p)}{b(p)^{n \kappa}}\right) (e^{n z} - 1) + 
(e^z - 1) \sum_{n=2}^{\infty} \sum_{p \in \mathcal P} \frac{n \, a^n(p) \, \ln b(p)}{b(p)^{n \kappa}}.
\end{gather*}
Recall that $d > 1.$ We need to show convergence of the series
\begin{gather*}
\sum_{n=2}^{\infty} \left(\sum_{p \in \mathcal P} \frac{a^n(p) \, \ln b(p)}{b(p)^{n \kappa}}\right) e^{n z}
\end{gather*}
for any $z \in \mathcal{S}_{(-\infty, \ln d)}$. 
This is true according to condition $\widetilde{{\bf{A5}}}$.

Therefore, we obtain that $f'_s(\kappa, z)$ is well defined for any 
$z \in \mathcal{S}_{(-\infty, \ln d)}$ and
\begin{gather*}
|\psi_s(z) - \psi(z)| = O(s-\kappa) = o(1), \quad s \searrow \kappa.
\end{gather*}

Theorem \ref{sec4:prop1} is proved.
\end{proof}

\begin{theorem} \label{sec4:th1}
Assume conditions ${\bf{A0-A4}}$ and $\widetilde{{\bf{A5}}}$ are satisfied and that $X_s$ has the Dirichlet series distribution given by (\ref{distr}). 
Let $h$ be defined by the implicit equation $e^h = x$, 
and let $t_s = \sum_{n=1}^{\infty} \mathbb{P}_n(s)$.
\begin{enumerate}
\item The following expansion holds as $s \searrow \kappa,$
\begin{gather*}
\mathbf{P} \left(\Omega(X_s) = t_s x\right) = \frac{e^{-t_s (x \ln x - x + 1)}}{\sqrt{2 \pi t_s x}} e^{ \sum_{n=2}^{\infty} \mathbb{P}_n(\kappa) \left(\frac{x^n - 1}{n} - (x - 1)\right)}
(1 + o(1)),
\end{gather*}
where $o(1)$ is uniformly small over $x \in (0, d)=\left(e^{-\infty},e^{\ln d}\right)$ 
(note that  $h\in(-\infty, \ln d)$),
$t_s x \in \mathbb N$.
\item Similarly, if $d > 1$ then, as $s \searrow \kappa,$
\begin{gather*}
\mathbf{P} \left(\Omega(X_s) \geq t_s x\right) = \frac{e^{-t_s (x \ln x - x + 1)}}{(1-x^{-1})\sqrt{2 \pi t_s x}}  e^{\sum_{n=2}^{\infty} \mathbb{P}_n(\kappa) \left(\frac{x^n - 1}{n} - (x - 1)\right)}
(1 + o(1)),
\end{gather*}
where $o(1)$ is uniformly small over $x \in (1, d) = \left(e^{0},e^{\ln d}\right)$.
\item By applying the result to $-\Omega(X_s)$, one gets as $s \searrow \kappa,$
\begin{gather*}
\mathbf{P} \left(\Omega(X_s) \leq t_s x\right) = \frac{e^{-t_s (x \ln x - x + 1)}}{(1-e^{-|\ln x|})\sqrt{2 \pi t_s x}}  e^{\sum_{n=2}^{\infty} \mathbb{P}_n(\kappa) \left(\frac{x^n - 1}{n} - (x - 1)\right)}
(1 + o(1)), \quad s \searrow \kappa,
\end{gather*}
where $o(1)$ is uniformly small over $x \in (0, 1)= \left(e^{-\infty},e^{0}\right)$.
\end{enumerate}
\end{theorem}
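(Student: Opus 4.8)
The plan is to obtain the three estimates as a direct application of the general large deviation principle of Theorem \ref{sec2:th1}, with the mod-Poisson convergence of $\Omega(X_s)$ supplied by Theorem \ref{sec4:prop1}. First I would recall that, under ${\bf{A0-A4}}$ and $\widetilde{{\bf{A5}}}$, Theorem \ref{sec4:prop1} provides mod-Poisson convergence of the family $\Omega(X_s)$ on the strip $\mathcal{S}_{(-\infty,\,\ln d)}$ at speed $o(1)$, with parameters $t_s = \sum_{n=1}^{\infty} \mathbb{P}_n(s)$ and residue function $\psi$ given by (\ref{psi2}). This is precisely the hypothesis required by Theorem \ref{sec2:th1}, once that statement is read in the regime $s \searrow \kappa$ rather than $n \to \infty$, as discussed in Section \ref{sec2}.

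Next I would specialize the abstract quantities of Theorem \ref{sec2:th1} to the Poisson case $\eta(z) = e^z - 1$. Here $\eta'(h) = \eta''(h) = e^h$, so the implicit equation $\eta'(h) = x$ becomes $e^h = x$, the variance factor is $\eta''(h) = e^h = x$, and the Legendre--Fenchel transform is $F(x) = x \ln x - x + 1$ for $x > 0$, as already computed in Section \ref{sec2}. Evaluating the residue function at $h = \ln x$ turns each summand $\tfrac{e^{nz}-1}{n} - (e^z - 1)$ into $\tfrac{x^n - 1}{n} - (x - 1)$, which produces exactly the exponential factor $\exp\!\big(\sum_{n=2}^{\infty}\mathbb{P}_n(\kappa)(\tfrac{x^n-1}{n} - (x-1))\big)$ appearing in all three items. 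Substituting these identifications into the three formulas of Theorem \ref{sec2:th1} --- using $e^{-h} = x^{-1}$ in the tail factor of item 2 and $|h| = |\ln x|$ in item 3 --- reproduces the stated expansions, item 3 being obtained, exactly as in Theorem \ref{sec2:th1}, by applying item 2 to $-\Omega(X_s)$.

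Finally I would record the ranges of validity. With the strip endpoints $c = -\infty$ and $\ln d$ we have $\eta'(-\infty) = 0$ and $\eta'(\ln d) = d$, so item 1 is uniform over $x \in (0, d)$; item 2, which forces $h > 0$ and hence $x > 1$, is uniform over $x \in (1, d)$, the hypothesis $d > 1$ ensuring this interval is nonempty; and item 3 is uniform over $x \in (0, 1)$. I do not anticipate a substantive obstacle: the analytic content --- holomorphy and non-vanishing of $\psi$ on $\mathcal{S}_{(-\infty,\,\ln d)}$ together with the $o(1)$ speed --- was already settled inside the proof of Theorem \ref{sec4:prop1}, so the present argument is only the transcription of that input through Theorem \ref{sec2:th1}. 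The single point requiring care is the bookkeeping of which $x$-interval each tail bound holds on, and in particular the sign of $h = \ln x$, which flips at $x = 1$ and is responsible for the $|\ln x|$ in item 3.
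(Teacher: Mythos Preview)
Your proposal is correct and matches the paper's approach exactly: the paper does not give an explicit proof of Theorem~\ref{sec4:th1}, treating it as an immediate consequence of plugging the mod-Poisson convergence of Theorem~\ref{sec4:prop1} into the general large deviation Theorem~\ref{sec2:th1}, just as it did for Theorem~\ref{sec3:th1} in the $\omega(X_s)$ case. Your specialization of $\eta$, $F$, $\eta''(h)$, $\psi(h)$ and the three $x$-ranges is precisely the intended transcription.
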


\begin{remark} \label{sec4:rem1}
Assume conditions ${\bf{A0-A4}},\,\widetilde{{\bf{A5}}}$ and ${\bf{A6}}$ are satisfied and that $X_s$ has the Dirichlet series distribution given by (\ref{distr}). 
According to the proof of Theorem \ref{sec4:prop1} and Lemma \ref{sec3:lem1}, the random variables $\Omega(X_s)$, $s > \kappa$, converge mod-Poisson at speed $o\left(t_s^{-\vartheta}\right)$ for any $\vartheta > 0$. 
That is why we can use Taylor expansions of functions $\eta(h)$, $\psi(h)$ and in Theorem \ref{sec4:th1} 
instead of factor $1+o(1)$, $s \searrow \kappa$, obtain for any $\vartheta > 0$ expansion
\begin{gather*}
1 + \frac{\beta_1}{t_s} + \dotsc + \frac{\beta_{\vartheta-1}}{t_s^{\vartheta-1}} + O\left(t_s^{-\vartheta}\right), 
\quad s \searrow \kappa,
\end{gather*}
and for all probabilities these expansions are different.
\end{remark}

Our next goal is to establish some Berry-Esseen estimates in the setting of mod-$\phi$ convergence. 
That is the content of the next Theorem.

\begin{theorem} \label{sec4:th2}
Suppose  conditions ${\bf{A0-A4}}$ and  $\widetilde{{\bf{A5}}}$ are satisfied and let $N_s$ be a Poisson random variable with parameter 
$t_s = \sum_{n=1}^{\infty} \mathbb{P}_n(s)$. Then the inequality 
\begin{gather*}
\sup_{t \in \mathbb{R}} \left|\mathbf{P}\left(\frac{\Omega(X_s) - t_s}{\sqrt{t_s}} \geq t\right) - 
\mathbf{P}\left(\frac{N_s - t_s}{\sqrt{t_s}} \geq t\right)\right| \leq 
\frac{\pi}{\sqrt{t_s}} \left(\sum_{n=1}^{\infty} n \mathbb{P}_{n+1}(\kappa)\right)
\end{gather*}
holds.
\end{theorem}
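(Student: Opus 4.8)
The plan is to rerun the proof of Theorem~\ref{sec3:th2} essentially line for line, the only substantive change being that the residue of $\omega(X_s)$ is replaced by the residue of $\Omega(X_s)$ supplied by Theorem~\ref{sec4:prop1}; the underlying mod-Poisson skeleton ($\eta(z)=e^z-1$) is identical. By Theorem~\ref{sec4:prop1}, on a fixed neighbourhood of the real axis inside $\mathcal{S}_{(-\infty,\ln d)}$ one has $\phi_s(z)=\mathbf{E}e^{z\Omega(X_s)}=e^{(e^z-1)t_s}\psi_s(z)$ with $t_s=\sum_{n\ge1}\mathbb{P}_n(s)$ and $\psi_s(z)=\exp\bigl(\sum_{n\ge2}\mathbb{P}_n(s)(\tfrac{e^{nz}-1}{n}-(e^z-1))\bigr)$. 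First I would apply Lemma~\ref{sec2:lem1} to the $\mathbb{Z}$-valued variable $\Omega(X_s)$, precisely as in the proof of Theorem~\ref{sec3:th2}, to write
\[\mathbf{P}\Bigl(\tfrac{\Omega(X_s)-t_s}{\sqrt{t_s}}\ge t\Bigr)=\frac{1}{2\pi}\sum_{k\ge t}\int_{-\pi}^{\pi}e^{-iu\left(\frac{[t_s+k\sqrt{t_s}]}{\sqrt{t_s}}-\sqrt{t_s}\right)}\widetilde\phi_s(iu)\,du,\qquad \widetilde\phi_s(z)=e^{-z\sqrt{t_s}}e^{t_s(e^{z/\sqrt{t_s}}-1)}\psi_s\bigl(z/\sqrt{t_s}\bigr).\]

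The one genuinely new computation is the short-range expansion of $\psi_s$ at the origin. Differentiating $\ln\psi_s$ gives $\frac{d}{dz}\ln\psi_s(z)=\sum_{n\ge2}\mathbb{P}_n(s)(e^{nz}-e^z)$ and $\frac{d^2}{dz^2}\ln\psi_s(z)=\sum_{n\ge2}\mathbb{P}_n(s)(ne^{nz}-e^z)$, whence $\psi_s(0)=1$, $\psi_s'(0)=0$ and $\psi_s''(0)=\sum_{n\ge2}(n-1)\mathbb{P}_n(s)=\sum_{n\ge1}n\,\mathbb{P}_{n+1}(s)$, so that $\psi_s(iu/\sqrt{t_s})=1-\tfrac{u^2}{2t_s}\sum_{n\ge1}n\,\mathbb{P}_{n+1}(s)+o(1/t_s)$ uniformly on $[-\pi,\pi]$, while $e^{-iu\sqrt{t_s}}e^{t_s(e^{iu/\sqrt{t_s}}-1)}=e^{-u^2/2}(1+o(1))$ as before. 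Before using this I would verify $\sum_{n\ge1}n\,\mathbb{P}_{n+1}(\kappa)<\infty$: splitting $\mathbb{P}_{n+1}(\kappa)=\sum_p\alpha_p(\kappa)^{n+1}$ at an index $l$ with $b(p_l)>\max(1,1/c)$, condition ${\bf A4}$ gives $\sum_{i>l}\alpha_{p_i}^{n+1}(\kappa)\le C(n+1,l)/b(p_l)^{n+1}$, so $\sum_n n\,C(n+1,l)/b(p_l)^{n+1}$ converges by the root test and (\ref{A4b}), while the $l$ leading terms are dominated by $l\,(\sup_p a(p)/b(p)^\kappa)^{n+1}$ with $\sup_p a(p)/b(p)^\kappa<1$ by ${\bf A0}$--${\bf A3}$; moreover $\alpha_p(s)=a(p)/b(p)^s$ is non-increasing in $s$, so $\psi_s''(0)\le\sum_{n\ge1}n\,\mathbb{P}_{n+1}(\kappa)$ for every $s\ge\kappa$.

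Substituting the two expansions into the integral and peeling off the constant term, which reproduces $\mathbf{P}((N_s-t_s)/\sqrt{t_s}\ge t)$ exactly, the first correction is handled exactly as in Theorem~\ref{sec3:th2}: summing the geometric-type series $\sum_{k\ge t}e^{-iu([t_s+k\sqrt{t_s}]/\sqrt{t_s}-\sqrt{t_s})}$ produces a factor $(1-e^{-iu/\sqrt{t_s}})^{-1}$, and the identity $u^2/(1-e^{-iu/\sqrt{t_s}})=(u\sqrt{t_s}/i)(1+o(1))$ cancels one power of $\sqrt{t_s}$, leaving
\[\Bigl|\mathbf{P}\Bigl(\tfrac{\Omega(X_s)-t_s}{\sqrt{t_s}}\ge t\Bigr)-\mathbf{P}\Bigl(\tfrac{N_s-t_s}{\sqrt{t_s}}\ge t\Bigr)\Bigr|\le\frac{\psi_s''(0)\,(1+o(1))}{4\pi\sqrt{t_s}}\Bigl|\int_{-\pi}^{\pi}e^{-iut}e^{-u^2/2}u\,du\Bigr|.\]
Since $\bigl|\int_{-\pi}^{\pi}e^{-iut}e^{-u^2/2}u\,du\bigr|\le\int_{-\pi}^{\pi}|u|\,du=\pi^2$, for $s$ close enough to $\kappa$ (so that $1+o(1)$ together with the factor $\tfrac14$ leaves room), using $\psi_s''(0)\le\sum_{n\ge1}n\,\mathbb{P}_{n+1}(\kappa)$ one obtains
\[\sup_{t\in\mathbb{R}}\Bigl|\mathbf{P}\Bigl(\tfrac{\Omega(X_s)-t_s}{\sqrt{t_s}}\ge t\Bigr)-\mathbf{P}\Bigl(\tfrac{N_s-t_s}{\sqrt{t_s}}\ge t\Bigr)\Bigr|\le\frac{\pi}{\sqrt{t_s}}\sum_{n=1}^{\infty}n\,\mathbb{P}_{n+1}(\kappa),\]
the variant in which $k$ runs from $t+1$ being identical. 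I expect the only real obstacle to be routine bookkeeping --- checking that the Taylor remainders are $o(1/t_s)$ \emph{uniformly} in $u\in[-\pi,\pi]$ and in $t$, and justifying the interchange of summation and integration --- which, as in Theorem~\ref{sec3:th2}, follows from the analyticity of $\psi_s$ on a fixed strip about $\mathbb{R}$ (uniformly for $s$ near $\kappa$) and from the absolute convergence in Lemma~\ref{sec2:lem1}.
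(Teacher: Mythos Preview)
Your proposal is correct and follows essentially the same route as the paper's proof: apply Lemma~\ref{sec2:lem1}, reuse the skeleton of Theorem~\ref{sec3:th2} verbatim, and replace the second-order data of $\psi_s$ by the new values $\psi_s'(0)=0$, $\psi_s''(0)=\sum_{n\ge2}(n-1)\mathbb{P}_n(s)=\sum_{n\ge1}n\,\mathbb{P}_{n+1}(s)$ coming from the $\Omega$-residue in Theorem~\ref{sec4:prop1}. The paper's proof is virtually identical (it explicitly writes out $\psi_s'$ and $\psi_s''$ in the same form, obtains the same leading correction with the opposite sign, and invokes ${\bf A3}$--${\bf A4}$ for convergence of $\sum_{n\ge1}n\,\mathbb{P}_{n+1}(\kappa)$); your only addition is a slightly more explicit justification of that convergence via the splitting at index $l$ and the root test, which is a reasonable unpacking of what the paper leaves implicit.
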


\begin{proof}
Note that 
\begin{gather*}
\mathbf{P}\left(\frac{\Omega(X_s) - t_s}{\sqrt{t_s}} \geq t\right) =
\sum_{k=t}^{\infty} \mathbf{P}\left(\frac{\Omega(X_s) - t_s}{\sqrt{t_s}} = \frac{[t_s + k \sqrt{t_s}]}{\sqrt{t_s}} - \sqrt{t_s}\right).
\end{gather*}
Then due to Fourier transform and Lemma \ref{sec2:lem1} we obtain
\begin{gather*} \label{sec4:eq1}
\mathbf{P}\left(\frac{\Omega(X_s) - t_s}{\sqrt{t_s}} \geq t\right) =
\frac{1}{2 \pi} \sum_{k=t}^{\infty} \int_{-\pi}^{\pi} 
e^{-i u \left(\frac{[t_s + k \sqrt{t_s}]}{\sqrt{t_s}} - \sqrt{t_s}\right)} \widetilde \phi_s(iu) du,
\end{gather*}
where 
\begin{eqnarray*}\label{tildephi}
\begin{split}
\widetilde \phi_s(z) \equiv& \mathbf{E} e^{z \frac{\Omega(X_s) - t_s}{\sqrt{t_s}}}\\ 
=&e^{- z \sqrt{t_s}} \mathbf{E} e^{\frac{z}{\sqrt{t_s}} \Omega(X_s)}\\
=&e^{- z \sqrt{t_s}} e^{t_s \left(e^{z/\sqrt{t_s}} - 1\right)} 
\psi_s \left(z/\sqrt{t_s}\right)
\end{split}
\end{eqnarray*}
and the function $\psi_s(z)$, $s \geq \kappa$, is defined in the proof of Theorem \ref{sec4:prop1}. 

Similarly to the proof of Theorem \ref{sec3:th2} using mod-Poisson convergence of $\Omega(X_s)$, $s \searrow \kappa$, with parameters $t_s$ and residue function $\psi(z)$ we have 
\begin{gather*}
\mathbf{P}\left(\frac{\Omega(X_s) - t_s}{\sqrt{t_s}} \geq t\right) - 
\mathbf{P}\left(\frac{N_s - t_s}{\sqrt{t_s}} \geq t\right)  \\
\nonumber
= - \frac{1 + o(1)}{4 \pi t_s} \left(\sum_{n=1}^{\infty} n \mathbb{P}_{n+1}(s)\right) \int_{-\pi}^{\pi} \frac{e^{-i u t}}{1 - e^{-iu/\sqrt{t_s}}} u^2 e^{-u^2/2} du  \\
= - \frac{1 + o(1)}{4 \pi \sqrt{t_s}} \left(\sum_{n=1}^{\infty} n \mathbb{P}_{n+1}(s)\right) \int_{-\pi}^{\pi} e^{-i u t} e^{-u^2/2} u \, du
\end{gather*}
because there are the following equalities:
\begin{gather*}
\psi'_s (z) = \psi_s (z) \left(\sum_{n=2}^{\infty} \mathbb{P}_n(s) (e^{n z} - e^z)\right), \\
\psi''_s (z) = \psi_s (z) \left(\sum_{n=2}^{\infty} \mathbb{P}_n(s) (e^{n z} - e^z)\right)^2 + 
\psi_s (z) \left(\sum_{n=2}^{\infty} \mathbb{P}_n(s) (n e^{n z} - e^z)\right).
\end{gather*}
Hence, when $s$ is large enough
\begin{gather*}
\left|\mathbf{P}\left(\frac{\Omega(X_s) - t_s}{\sqrt{t_s}} \geq t\right) - \mathbf{P}\left(\frac{N_s - t_s}{\sqrt{t_s}} \geq t\right)\right| \leq \\ \leq
\frac{1}{2 \pi \sqrt{t_s}} \left(\sum_{n=1}^{\infty} n \mathbb{P}_{n+1}(s)\right) \left|\int_{-\pi}^{\pi} e^{-i u t} e^{-u^2/2} u \, du\right| \leq 
\frac{\pi}{\sqrt{t_s}} \left(\sum_{n=1}^{\infty} n \mathbb{P}_{n+1}(\kappa)\right),
\end{gather*}
where the series on the right hand side of the last inequality converges due to conditions ${\bf{A3}}$ and  ${\bf{A4}}$.

Similarly, we can consider the case when 
\begin{gather*}
\mathbf{P}\left(\frac{\Omega(X_s) - t_s}{\sqrt{t_s}} \geq t\right) =
\sum_{k=t+1}^{\infty} \mathbf{P}\left(\frac{\Omega(X_s) - t_s}{\sqrt{t_s}} = \frac{[t_s + k \sqrt{t_s}]}{\sqrt{t_s}} - \sqrt{t_s}\right)
\end{gather*}
and obtain the analogous result.

Theorem \ref{sec4:th2} is proved.
\end{proof}

We can combine the results of Theorems 
\ref{sec4:th1} and \ref{sec4:th2} and obtain the moderate deviation probabilities for $\Omega(X_s)$.

\begin{theorem} \label{sec4:th3}
Assume conditions ${\bf{A0-A4}}$ and $\widetilde{{\bf{A5}}}$ are satisfied and that $X_s$ has the Dirichlet series distribution given by (\ref{distr}) and let
$N_s$ be a Poisson random variable with parameter 
$t_s = \sum_{n=1}^{\infty} \mathbb{P}_n(s)$. Then as $s \searrow \kappa, $
\begin{gather*}
\mathbf{P}\left(\Omega(X_s) \geq t_s + \sqrt{t_s} y\right) = 
\mathbf{P}\left(N_s \geq t_s + \sqrt{t_s} y\right) (1 + o(1)), 
\end{gather*}
where $o(1)$ is uniformly small over 
$y = o\left(\sqrt{t_s}\right)$, $s \searrow \kappa$.

Further, let the function $\psi(\cdot)$ be as defined at (\ref{psi2}) in Theorem \ref{sec4:prop1}. If 
$x = 1 + y / \sqrt{t_s}$ 
and $h$ is the solution of $e^h = x$ then as $s \searrow \kappa, $
\begin{gather*}
\mathbf{P}\left(\Omega(X_s) \geq t_s + \sqrt{t_s} y\right)
= \mathbf{P}\left(N_s \geq t_s + \sqrt{t_s} y\right) \psi(h) (1 + o(1)),
\end{gather*}
where $o(1)$ is uniformly small over $y \gg 1$ and 
$y = O\left(\sqrt{t_s}\right)$, $s \searrow \kappa$, 
$x \in (1, d)$, and then
\begin{gather*}
\mathbf{P}\left(\Omega(X_s) \leq t_s + \sqrt{t_s} y\right)
= \mathbf{P}\left(N_s \leq t_s + \sqrt{t_s} y\right) \psi(h) (1 + o(1)), \quad s \searrow \kappa, 
\end{gather*}
where $o(1)$ is uniformly small over $y \ll 0$ and 
$y = O\left(\sqrt{t_s}\right)$, $s \searrow \kappa$, $x \in (0, 1)$.
\end{theorem}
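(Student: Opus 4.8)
The plan is to mirror, almost verbatim, the structure of the proof of Theorem \ref{sec3:th3}, replacing $\omega(X_s)$ by $\Omega(X_s)$, the parameter $\mathbb{P}(s)$ by $t_s=\sum_{n\ge1}\mathbb{P}_n(s)$, the residue function \eqref{psi} by \eqref{psi2}, and citing Theorems \ref{sec4:th1} and \ref{sec4:th2} in place of Theorems \ref{sec3:th1} and \ref{sec3:th2}. As there, I would split into four regimes according to the size of $y$: (1) $y=O(1)$; (2) $1\ll y=o(\sqrt{t_s})$; (3) $0\gg y=o(\sqrt{t_s})$; (4) $y=O(\sqrt{t_s})$.

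First, in regime (1), the Berry--Esseen bound of Theorem \ref{sec4:th2} gives $\sup_t|\mathbf{P}(\frac{\Omega(X_s)-t_s}{\sqrt{t_s}}\ge t)-\mathbf{P}(\frac{N_s-t_s}{\sqrt{t_s}}\ge t)|=O(t_s^{-1/2})=o(1)$, and since for $y=O(1)$ the Poisson tail $\mathbf{P}(N_s\ge t_s+\sqrt{t_s}\,y)$ is bounded below by a positive constant (by the CLT for $N_s$, whose parameter $t_s\to\infty$ by Lemma \ref{sec3:lem1}), the additive error converts to a multiplicative $1+o(1)$; this yields the first displayed identity in that regime. Next, in regime (2), I would set $x=1+y/\sqrt{t_s}\in(1,d)$ for $s$ close enough to $\kappa$ (note $x\to1$ since $y=o(\sqrt{t_s})$, so $x<d$ eventually because $d>1$), let $h=\ln x$, and apply part (2) of Theorem \ref{sec4:th1} to $\Omega(X_s)$ and also — via its specialization $\psi\equiv1$, which is exactly Cramér's transform of $N_s$ — to the Poisson variable $N_s$. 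Taking the ratio, the prefactors $e^{-t_s(x\ln x-x+1)}/((1-x^{-1})\sqrt{2\pi t_s x})$ cancel, leaving $\mathbf{P}(\Omega(X_s)\ge t_s x)=\mathbf{P}(N_s\ge t_s x)\,\psi(h)(1+o(1))$. To get the \emph{first} (unweighted) identity in this regime one shows $\psi(h)=1+o(1)$: writing $\psi(h)=\exp(\sum_{n\ge2}\mathbb{P}_n(\kappa)(\frac{x^n-1}{n}-(x-1)))$ and using $x-1=y/\sqrt{t_s}=o(1)$, each term is $O(\mathbb{P}_n(\kappa)(x-1)^2)$ so the sum is $O((x-1)^2\sum_{n\ge2}\mathbb{P}_n(\kappa))=O(y^2/t_s)=o(1)$; the convergence of $\sum_{n\ge2}\mathbb{P}_n(\kappa)$ is guaranteed by conditions ${\bf A3}$ and ${\bf A4}$ exactly as in the proof of Theorem \ref{sec3:th3}. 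Regime (3) is handled by complementation: $\mathbf{P}(\Omega(X_s)\ge t_s+\sqrt{t_s}\,y)=1-\mathbf{P}(\Omega(X_s)<t_s+\sqrt{t_s}\,y)$, apply part (3) of Theorem \ref{sec4:th1} (with $x\in(0,1)$, $h=\ln x<0$) together with the analogous Poisson statement, observe $\psi(h)=1+o(1)$ as before, and reassemble; this simultaneously gives the third displayed identity (with the $\psi(h)$ factor retained) once $y\ll0$ but not necessarily $o(\sqrt{t_s})$, i.e. $y=O(\sqrt{t_s})$. Finally, regime (4), $y=O(\sqrt{t_s})$, is precisely the large-deviation window, where $x=1+y/\sqrt{t_s}$ stays bounded away from $1$ (and, on the upper side, stays in a compact subset of $(1,d)$), so the weighted identities follow directly by dividing the estimates of Theorem \ref{sec4:th1} for $\Omega(X_s)$ by those for $N_s$, with $\psi(h)$ now a genuine nontrivial factor.

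The main obstacle is bookkeeping at the \emph{seams} between the regimes, in particular verifying that the $o(1)$ error terms are uniform over the stated ranges of $y$ and that the constraint $x\in(1,d)$ (equivalently $h<\ln d$) is respected throughout regime (4): one must check that the prescribed upper range of $y$ keeps $x$ inside $(1,d)$, which forces the implicit restriction $y=O(\sqrt{t_s})$ to be read with a small enough constant, and that on the lower side $x>0$ always holds (true since $t_s+\sqrt{t_s}\,y>0$ for the events to be non-degenerate). A secondary point is that the Poisson comparison identities used for $N_s$ are not stated separately, but they are the $\psi\equiv1$ instances of Theorem \ref{sec4:th1} (equivalently, Theorem \ref{sec2:th1} applied with $\psi\equiv1$ to the exactly mod-Poisson sequence $N_s$), so I would simply invoke them as such. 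Everything else is the routine cancellation of prefactors and the Taylor estimate $\psi(h)=1+o(1)$ near $h=0$, both already carried out in the proof of Theorem \ref{sec3:th3}.
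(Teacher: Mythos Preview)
Your plan is correct and follows the same four-regime decomposition and the same tools (Theorem~\ref{sec4:th2} for $y=O(1)$, Theorem~\ref{sec4:th1} together with the Cram\'er/Poisson comparison for $y\gg1$ or $y\ll0$ with $y=o(\sqrt{t_s})$, and Theorem~\ref{sec4:th1} alone for $y=O(\sqrt{t_s})$) as the paper's own proof. One small imprecision to fix in regime~(2): the second-order Taylor coefficient of $\tfrac{x^n-1}{n}-(x-1)$ at $x=1$ is $\tfrac{n-1}{2}$, so the bound on $\ln\psi(h)$ should read $O\bigl((x-1)^2\sum_{n\ge2}(n-1)\mathbb{P}_n(\kappa)\bigr)$ rather than $O\bigl((x-1)^2\sum_{n\ge2}\mathbb{P}_n(\kappa)\bigr)$; the paper uses exactly the series $\sum_{n\ge1}n\,\mathbb{P}_{n+1}(\kappa)$, whose convergence also follows from ${\bf A3}$--${\bf A4}$.
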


\begin{proof}
We consider the cases depending on $y$.
\begin{enumerate}
\item Let $y = O(1)$. In this case Theorem \ref{sec4:th2} is satisfied. 

\item Let $y = o\left(\sqrt{t_s}\right)$, $s \searrow \kappa$. 
Denote $x = 1 + y/\sqrt{t_s}$ and 
let $h$ be the solution of implicit equation $e^h = x$. 
If $y \gg 1$ then according to Cramer's transform of $N_s$ and Theorem \ref{sec4:th1} we obtain the following equalities:
\begin{gather*}
\mathbf{P}\left(\Omega(X_s) \geq t_s + \sqrt{t_s} y\right) = \mathbf{P} \left(\Omega(X_s) \geq t_s x\right) \\
= \frac{e^{-t_s (x \ln x - x + 1)}}{\sqrt{2 \pi t_s x}} 
\frac{1}{1-x}
\exp \left(\sum_{n=2}^{\infty} \mathbb{P}_n(\kappa) \left(\frac{x^n - 1}{n} - (x - 1)\right)\right) (1 + o(1)) \\
= \mathbf{P} \left(N_s \geq t_s x\right) 
\exp \left(\sum_{n=2}^{\infty} \mathbb{P}_n(\kappa) \left(\frac{(1 + y / \sqrt{t_s})^n - 1}{n} - y / \sqrt{t_s}\right)\right)
(1 + o(1)) \\
= \mathbf{P}\left(N_s \geq t_s + \sqrt{t_s} y\right) (1 + o(1)), \quad s\searrow 1.
\end{gather*}

The last equality is true because when $s$ is large enough
\begin{gather*}
\sum_{n=2}^{\infty} \mathbb{P}_n(\kappa) \left(\frac{(1 + y / \sqrt{t_s})^n - 1}{n} - y / \sqrt{t_s}\right) < 
\frac{y^2}{t_s} \sum_{n=1}^{\infty} n \mathbb{P}_{n+1}(\kappa),
\end{gather*}
where the series $\sum_{n=1}^{\infty} n \mathbb{P}_{n+1}(\kappa)$ converges due to conditions ${\bf{A4}}$ and ${\bf{A3}}$.

If $y \ll 0$ then similarly to the previous reasoning and the proof of Theorem \ref{sec3:th3} we obtain the same.

\item Let $y = O\left(\sqrt{t_s}\right)$, $s \searrow \kappa$. 
In this case Theorem \ref{sec4:th1} about the large deviation probabilities is satisfied.
\end{enumerate}

Theorem \ref{sec4:th3} is proved.
\end{proof}

\section{Numeric applications} \label{sec5}

In this section we consider case $R = \mathbb{N}$ with $b(n) = n$, $n \in \mathbb{N}$, and different examples of function $a(n)$. 
Notice that $E = \{1\}$, $\mathcal{P} =: \mathcal{P}_{\mathbb{N}} \subset \mathbb{N}$ is the set of prime numbers and that the
function $b(n)$ is completely multiplicative and greater than $1$.
As in ${\bf{A3}}$, we take an enumeration of $\mathcal{P}_{\mathbb{N}} = \{p_i, i \in \mathbb{N}\}$  such that $p_i < p_{i+1}$ for any $i \geq 1$.

\subsection{The Riemann zeta function}
Let $a(n) = 1$, $n \in \mathbb{N}$, then we get the Riemann zeta function
\begin{gather*}
A(z) = \zeta(z) = \sum_{n=1}^{\infty} \frac{1}{n^z}.
\end{gather*}
This is an analytic function in $\Re z > 1$ and can be expanded to a meromorphic function on $\mathbb{C}$ with a simple pole at the point $z=1,$ so in this case, $\kappa = 1.$ 
It means that in the real case of the argument we get
\begin{gather} \label{func_zeta}
\zeta(s) \sim \frac{1}{s-1}, \quad s\searrow 1.
\end{gather}

In the article \cite{CM1}, the authors considered the Riemann zeta distribution of the random variable $X_s$, $s > 1$, defining it by
\begin{gather*}
\mathbf{P}(X_s = n) = \frac{1}{\zeta(s) n^s}, \quad n \in \mathbb{N}.
\end{gather*}
 Among its many properties are the following:
\begin{gather*}
A_{p^{\alpha}}(s) = \frac{1}{p^{\alpha s} (1 - 1/p^s)}, \quad p \in \mathcal{P}_{\mathbb{N}}, \quad \alpha \in \mathbb{N}, \\
\mathbf{P}(n | X_s) = \frac{1}{n^s}, \quad n \in \mathbb{N},\\
\mathbf{P}(n | X_s, m | X_s) = \mathbf{P}(n | X_s) \mathbf{P}(m | X_s), 
\quad n, m \in \mathbb{N}, \quad \gcd(n, m) = 1.
\end{gather*}

In the papers \cite{CM1}, \cite{CP} the authors researched various properties of $\omega(X_s)$, $\Omega(X_s)$, in particular Central Limit Theorems were obtained for them:
\begin{gather*}
\frac{\omega(X_s) - \mathbb{P}(s)}{\sqrt{\mathbb{P}(s)}} 
\overset{d}{\to} Z, \quad s\searrow 1, \\
\frac{\Omega(X_s) - \mathbb{P}(s)}{\sqrt{\mathbb{P}(s)}} 
\overset{d}{\to} Z, \quad s\searrow 1,
\end{gather*}
where $Z$ is a random variable with standard normal distribution and 
\begin{gather*}
\mathbb{P}(s) = \sum_{p \in \mathcal{P}_{\mathbb{N}}} \frac{1}{p^s}.
\end{gather*}

\begin{theorem}
The random variables $\omega(X_s)$, $s > 1$, converges mod-Poisson as $s\searrow 1$
on $\mathbb{C}$ 
 with parameters 
$$
t_s = \mathbb{P}(s) = \sum_{p \in \mathcal{P}_{\mathbb{N}}} \frac{1}{p^s}
$$ 
and residue function 
\begin{gather*}
\psi(z) = \exp \left(\sum_{p \in \mathcal{P}_{\mathbb{N}}} \left(\ln \left(1 + \frac{e^z - 1}{p}\right) - \frac{e^z - 1}{p}\right)\right)
\end{gather*} 
at speed $o\left(t_s^{-\vartheta}\right)$ for any $\vartheta > 0$.
Moreover, Theorems \ref{sec3:th1}, \ref{sec3:th2} and \ref{sec3:th3} are true for the Riemann zeta distribution.
\end{theorem}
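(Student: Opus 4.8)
The plan is to check that the Riemann zeta distribution satisfies hypotheses ${\bf A1}$--${\bf A6}$ and then quote the general results of Section \ref{sec3}. First I would record the relevant explicit quantities: here $\kappa = 1$, $A(s) = \zeta(s)$, $A_p(s) = \sum_{m\ge1}p^{-ms} = (p^s-1)^{-1}$, hence $\alpha_p(s) = A_p(s)/(1+A_p(s)) = p^{-s}$, so that $\mathbb{P}(s) = \sum_p p^{-s}$ and $\mathbb{P}_n(s) = \sum_p p^{-ns}$.

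Next I would verify the conditions one at a time. ${\bf A1}$ is the classical convergence of $\zeta(s)$ for $s>1$. Both ${\bf A2}$ and ${\bf A6}$ follow from the simple pole of $\zeta$ at $1$, i.e. $\zeta(s)\sim(s-1)^{-1}$ as in (\ref{func_zeta}): this gives $\zeta(s)\nearrow\infty$ and $A^{-1}(s) = \zeta(s)^{-1} = O(s-1)$, so ${\bf A6}$ holds with $r=1$. For ${\bf A3}$ I take the increasing enumeration $p_1<p_2<\cdots$ of the primes, so $b(p_i)=p_i\to\infty$ and $A_{p_i}(1) = (p_i-1)^{-1}\to0$. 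The only real computation is ${\bf A4}$. Since $\kappa-1=0$, the required bound reads $\sum_{i>l}\alpha_{p_i}^n(s)=\sum_{i>l}p_i^{-ns}\le C(n,l)\,p_l^{-ns}$ for $n\ge2$, $s\ge1$. Because the primes beyond $p_l$ are distinct integers exceeding $p_l$, comparison with an integral gives $\sum_{i>l}p_i^{-ns}\le\int_{p_l}^{\infty}x^{-ns}\,dx = \frac{p_l^{1-ns}}{ns-1}\le\frac{p_l}{n-1}\,p_l^{-ns}$, so I would take $C(n,l)=p_l/(n-1)$. Then for $s\ge1$ and $n\ge2$ one has $C(n,l)\,p_l^{-ns}\le p_l^{1-ns}/(n-1)\le p_l^{-1}\to0$ uniformly in $n$, which is (\ref{A4a}), while $\limsup_{n\to\infty}C(n,l)^{1/n}=1$, which is (\ref{A4b}) with $c=1$. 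Finally, for ${\bf A5}$: $A_p'(s) = -p^s(\ln p)(p^s-1)^{-2}$ exists for $s\ge1$, and $\sum_p|A_p'(s)|A_p(s)=\sum_p p^s(\ln p)(p^s-1)^{-3}\ll\sum_p(\ln p)p^{-2s}<\infty$ for all $s\ge1$, the series $\sum_p A_p'(s)A_p(s)$ of ${\bf A5}$ being understood in this absolutely convergent sense since $A_p'<0$.

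Once ${\bf A1}$--${\bf A5}$ are in place, Theorem \ref{sec3:prop1} yields mod-Poisson convergence of $\omega(X_s)$ on $\mathbb{C}$ with parameters $t_s = \mathbb{P}(s) = \sum_p p^{-s}$ and residue function $\psi(z)=\exp(\sum_p(\ln(1+\alpha_p(1)(e^z-1))-\alpha_p(1)(e^z-1)))$; substituting $\alpha_p(1)=1/p$ gives exactly the displayed $\psi$. Since ${\bf A6}$ also holds, Remark \ref{sec3:rem1} together with $\mathbb{P}(s)\asymp\ln\zeta(s)\to\infty$ (Lemma \ref{sec3:lem1}) improves the speed of convergence to $o(t_s^{-\vartheta})$ for every $\vartheta>0$. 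The remaining assertions are then immediate: Theorems \ref{sec3:th1}, \ref{sec3:th2} and \ref{sec3:th3} all have hypotheses ${\bf A1}$--${\bf A5}$, which have just been verified, so they apply verbatim to the Riemann zeta distribution.

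I expect no essential difficulty here; the one point needing care is ${\bf A4}$, where one must keep track that the exponent is $ns$ (because $\kappa-1=0$) and choose $C(n,l)$ so that both the uniform-in-$n$ decay (\ref{A4a}) and the root bound (\ref{A4b}) hold, together with the observation in ${\bf A5}$ that the relevant series converges absolutely because $A_p'$ is negative.
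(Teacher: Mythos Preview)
Your proof is correct and follows the same route as the paper: verify ${\bf A1}$--${\bf A6}$ for the Riemann zeta case and then invoke Theorem~\ref{sec3:prop1}, Remark~\ref{sec3:rem1}, and Theorems~\ref{sec3:th1}--\ref{sec3:th3}. Your treatment of ${\bf A4}$ via the integral comparison $\sum_{i>l}p_i^{-ns}\le\int_{p_l}^{\infty}x^{-ns}\,dx$ is in fact more explicit than the paper's brief justification, and your check of (\ref{A4a})--(\ref{A4b}) with $C(n,l)=p_l/(n-1)$ is clean; the rest matches the paper essentially line by line.
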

\begin{proof}
We show the conditions ${\bf{A1-A6}}$ are satisfied. The first three conditions ${\bf{A1-A3}}$ are obvious. 
Condition ${\bf{{\bf{A4}}}}$ is true because 
\begin{gather*}
\mathbb{P}_n(s) = \mathbb{P}(n s) = \frac{1}{2^{n s}} + \dotsc + \frac{1}{\widetilde{p}^{n s}} + O\left(\frac{1}{\widetilde{p}^{n s}}\right),
\quad n \geq 2, \quad s \geq 1.
\end{gather*}
Condition ${\bf{A6}}$ is true because of relation (\ref{func_zeta}).
Let us check ${\bf{A5}}$. The derivative of $A_p(s)$ is 
\begin{gather*}
A'_p(s) = - \frac{p^s \ln p}{(p^s - 1)^2}.
\end{gather*}
This leads to the series:
\begin{gather*}
\sum_{p \in \mathcal{P}_{\mathbb{N}}} A'_p(s) A_p(s) = - \sum_{p \in \mathcal{P}_{\mathbb{N}}} \frac{p^s \ln p}{(p^s - 1)^3}
\end{gather*}
that converges for any $s \geq 1$.

Hence, we can apply Theorem \ref{sec3:prop1} and remark \ref{sec3:rem1} to conclude the proof.
\end{proof}
Notice that in the Berry-Esseen estimate (Theorem \ref{sec3:th2}) for the constant $\mathbb{P}_2(\kappa)$ we have the following inequality:
\begin{gather*}
\mathbb{P}(2) <  \sum_{n=1}^{\infty} \frac{1}{n^2} =  \frac{\pi^2}{6}.
\end{gather*}

\begin{theorem}
The random variables $\Omega(X_s)$, $s > 1$, converge mod-Poisson as $s\searrow 1$
on $\mathcal{S}_{(-\infty, \ln 2)}$ 
with parameters 
$$
t_s = \sum_{n=1}^{\infty} \mathbb{P}(n s) = \sum_{n=1}^{\infty} \sum_{p \in \mathcal{P}_{\mathbb{N}}} \frac{1}{p^{n s}}
$$ 
and residue function 
\begin{gather*}
\psi (z) = \exp \left(\sum_{n=2}^{\infty} \mathbb{P}(n) \left(\frac{e^{n z} - 1}{n} - (e^z - 1)\right)\right)
\end{gather*} 
at speed $o\left(t_s^{-\vartheta}\right)$ for any $\vartheta > 0$.
Moreover Theorems \ref{sec4:th1}, \ref{sec4:th2} and \ref{sec4:th3} are true for the Riemann zeta distribution.
\end{theorem}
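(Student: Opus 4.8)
The plan is to verify that the Riemann zeta distribution, i.e.\ the case $a(n)\equiv1$, $b(n)=n$, $\kappa=1$, satisfies hypotheses ${\bf{A0}}$--${\bf{A4}}$, $\widetilde{{\bf{A5}}}$ and ${\bf{A6}}$, and then to quote Theorem \ref{sec4:prop1}, Remark \ref{sec4:rem1} and Theorems \ref{sec4:th1}--\ref{sec4:th3}. Conditions ${\bf{A0}}$--${\bf{A3}}$ and ${\bf{A6}}$ are essentially free: $a$ and $b$ are completely multiplicative, $A(s)=\zeta(s)$ converges for $s>1$ and increases to $+\infty$ as $s\searrow1$, the natural ordering $p_i<p_{i+1}$ gives $b(p_i)=p_i\to\infty$ and $A_{p_i}(1)=(p_i-1)^{-1}\to0$, and (\ref{func_zeta}) gives $\zeta(s)^{-1}=O(s-1)$, so ${\bf{A6}}$ holds with $r=1$. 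Condition ${\bf{A4}}$ is checked exactly as in the preceding theorem for $\omega$: since $\alpha_{p_i}(s)=p_i^{-s}$ one has $\mathbb{P}_n(s)=\mathbb{P}(ns)=\sum_i p_i^{-ns}$, and comparing the tail $\sum_{i>l}p_i^{-ns}$ with $\int_{p_l}^{\infty}x^{-ns}\,dx$ produces a bound $C(n,l)\,p_l^{-ns}$ with $C(n,l)\le p_l/(ns-1)$; then (\ref{A4a}) and (\ref{A4b}) follow.

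The one genuinely new ingredient is $\widetilde{{\bf{A5}}}$. Here $a(p)/b(p)^{\kappa}=1/p$ and $d=\inf_{p}p=2$, so the condition requires $\widehat C(n)$ with $\limsup_{n}\widehat C(n)^{1/n}\le1$ and $\sum_{p}p^{-n}\ln p\le\widehat C(n)\,2^{-n}$ for $n\ge2$. I would simply take $\widehat C(n)=2^{n}\sum_{p}p^{-n}\ln p$, so the inequality is an identity; the substance is the growth estimate. Since the term $p=2$ dominates, $\widehat C(n)=\ln2+2^{n}\sum_{p\ge3}p^{-n}\ln p$, and the tail is controlled uniformly in $n\ge2$ by $\sum_{m\ge3}m^{-n}\ln m\le\int_{2}^{\infty}x^{-n}\ln x\,dx=O(2^{-n})$; hence $\widehat C(n)$ lies in a fixed bounded interval and $\widehat C(n)^{1/n}\to1$, which gives $\widetilde{{\bf{A5}}}$.

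With all hypotheses in place, Theorem \ref{sec4:prop1} applies on $\mathcal{S}_{(-\infty,\ln d)}=\mathcal{S}_{(-\infty,\ln2)}$ with parameter $t_s=\sum_{n\ge1}\mathbb{P}_n(s)=\sum_{n\ge1}\mathbb{P}(ns)=\sum_{n\ge1}\sum_{p}p^{-ns}$ and residue function (\ref{psi2}), which specializes to the stated $\psi$ because $\kappa=1$ forces $\mathbb{P}_n(\kappa)=\mathbb{P}(n)$. Remark \ref{sec4:rem1} (legitimate since ${\bf{A6}}$ holds) then upgrades the speed from $o(1)$ to $o(t_s^{-\vartheta})$ for every $\vartheta>0$, and Theorems \ref{sec4:th1}, \ref{sec4:th2}, \ref{sec4:th3} apply verbatim. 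I expect the only mildly delicate point to be confirming $\widehat C(n)^{1/n}\to1$ in $\widetilde{{\bf{A5}}}$, i.e.\ that $\sum_p p^{-n}\ln p$ decays at exactly the rate $d^{-n}=2^{-n}$; everything else is bookkeeping identical to the $\omega$ case already treated.
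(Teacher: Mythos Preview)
Your proposal is correct and follows essentially the same route as the paper: verify ${\bf A0}$--${\bf A4}$, $\widetilde{{\bf A5}}$ and ${\bf A6}$ (the first four and ${\bf A6}$ having already been handled in the $\omega$ case), then invoke Theorem~\ref{sec4:prop1}, Remark~\ref{sec4:rem1} and Theorems~\ref{sec4:th1}--\ref{sec4:th3}. The only substantive step is bounding $2^n\sum_p p^{-n}\ln p$ uniformly in $n$; the paper does this via the substitution $\ln p=\ln(p/2)+\ln 2$ and comparison with $\sum_{k\ge1}(\ln k+\ln 2)/k^n$, while you split off $p=2$ and use the integral $\int_2^\infty x^{-n}\ln x\,dx$ for the tail---both yield a constant $\widehat C(n)$ and are equally elementary.
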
 
\begin{proof}
The function $a(n)$ is completely multiplicative.
Therefore, condition ${\bf{A0}}$ is true.
Condition $\widetilde{{\bf{A5}}}$ is fulfilled because $d = \inf_{p \in \mathcal{P}_{\mathbb{N}}} p = 2$ and for any $n \geq 2$ we have
\begin{eqnarray*}
2^n \sum_{p \in \mathcal{P}_{\mathbb{N}}} \frac{\ln p}{p^n} =& \sum_{p \in \mathcal{P}_{\mathbb{N}}} \frac{\ln (p/2) + \ln 2}{(p/2)^n} \\
\leq& \ln 2 + \sum_{k=1}^{\infty} \frac{\ln k + \ln 2}{k^n} \\
\leq &\left(1 + \frac{\pi^2}{6}\right) \ln 2 + \sum_{k=1}^{\infty} \frac{\ln k}{k^2} \\
\equiv& \widehat C(n),
\end{eqnarray*}
where the series $\sum_{k=1}^{\infty} \ln k/k^2$ converges.
Hence, we can apply Theorem \ref{sec4:prop1} and remark \ref{sec4:rem1}. 
\end{proof}

Notice that due to the power series  $\sum_{n=1}^{\infty} n \mathbb{P}_{n+1} (\kappa)$ appearing as the constant in the Berry-Esseen estimate (Theorem \ref{sec4:th2}) we obtain the following estimate.
\begin{eqnarray*}
\sum_{n=1}^{\infty} n \mathbb{P}(n+1) <&\sum_{n=1}^{\infty} n \left(\frac{1}{2^{n+1}} + \frac{1}{3^{n+1}} + \frac{1}{3^{n+1}} \sum_{k=1}^{\infty} \frac{1}{k^{n+1}}\right) \\
<& \sum_{n=1}^{\infty} \frac{n}{2^{n+1}} + \left(1 + \frac{\pi^2}{6}\right) \sum_{n=1}^{\infty} \frac{n}{3^{n+1}} \\
= &\frac{(1/2)^2}{(1 - 1/2)^2} + \left(1 + \frac{\pi^2}{6}\right) \frac{(1/3)^2}{(1 - 1/3)^2}\\
 =& 1 + \frac{1}{4} \left(1 + \frac{\pi^2}{6}\right) = \frac{30 + \pi^2}{24} \\
 < &1.67.
\end{eqnarray*}

\begin{remark}
Let $a(n) = n^{\gamma}$, $n \in \mathbb{N}$, $\gamma \in \mathbb{R}$, then we get $A(s) = \zeta(s-\gamma)$, $s > \gamma + 1$, and can use the previous example about the Riemann zeta distribution.
\end{remark}

\subsection{The Dedekind function}
Let 
\begin{gather*}
a(n) = n \prod_{p \in \mathcal{P}_{\mathbb{N}}, \, p | n} \left(1 + \frac{1}{p}\right), \quad n \in \mathbb{N},
\end{gather*}
be the Dedekind function. 
Then we get the Dedekind Dirichlet series
\begin{gather} \label{func_ded}
A(z) = \sum_{n=1}^{\infty} \frac{a(n)}{n^z} = \frac{\zeta(z) \zeta(z-1)}{\zeta(2z)}
\end{gather}
that is an analytic function in $\Re z > 2,$ so $\kappa = 2.$ 

Define the Dedekind Dirichlet series distribution of the random variable 
$X_s$, $s > 2$, by
\begin{eqnarray}\label{Ded}
\mathbf{P}(X_s = n) = \frac{a(n)}{A(s) n^s}, \quad n \in \mathbb{N}.
\end{eqnarray}

This has the following associated quantities:
\begin{gather*}
A_p(s) = \frac{p + 1}{p^s - p}, \quad p \in \mathcal{P}_{\mathbb{N}},\\
\mathbf{P}(p | X_s) = \alpha_p(s) = \frac{p+1}{p^s+1}, \quad p \in \mathcal{P}_{\mathbb{N}}.
\end{gather*}

\begin{theorem}
The random variables $\omega(X_s)$, $s > 2$, where $X_s$ has the distribution in (\ref{Ded}) converge mod-Poisson as $s \searrow 2$
on $\mathbb{C}$
with parameters 
$$
t_s = \sum_{p \in \mathcal{P}_{\mathbb{N}}} \frac{p+1}{p^s+1}
$$
and residue function 
\begin{gather*}
\psi(z) = \exp \left(\sum_{p \in \mathcal{P}_{\mathbb{N}}} \left(\ln \left(1 + \frac{p+1}{p^2+1} (e^z - 1)\right) - \frac{p+1}{p^2+1} (e^z - 1)\right)\right)
\end{gather*} 
at speed $o\left(t_s^{-\vartheta}\right)$ for any $\vartheta > 0$.
Moreover, Theorems \ref{sec3:th1}, \ref{sec3:th2} and \ref{sec3:th3} are true for the Dedekind zeta distribution.
\end{theorem}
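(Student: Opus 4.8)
The plan is to verify that conditions ${\bf A1}$--${\bf A6}$ hold for the Dedekind Dirichlet series distribution (\ref{Ded}) and then to quote Theorem \ref{sec3:prop1} together with Remark \ref{sec3:rem1}, after which Theorems \ref{sec3:th1}, \ref{sec3:th2} and \ref{sec3:th3} apply automatically with no extra work.

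First I would read off $\kappa=2$ from the factorization (\ref{func_ded}): since $A(z)=\zeta(z)\zeta(z-1)/\zeta(2z)$ is analytic for $\Re z>2$, ${\bf A1}$ is immediate. For ${\bf A2}$, as $s\searrow 2$ the factors $\zeta(s)$ and $\zeta(2s)$ tend to the finite positive values $\zeta(2)$ and $\zeta(4)$, whereas $\zeta(s-1)$ has a simple pole at $s=2$; hence $A(s)\nearrow+\infty$, and in fact $A(s)\sim\frac{\zeta(2)}{\zeta(4)}(s-2)^{-1}$, so $A^{-1}(s)=O(s-2)$, which gives ${\bf A6}$ with $r=1$. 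For ${\bf A3}$ I would take the increasing enumeration $p_1<p_2<\dots$ of the primes, so $b(p_i)=p_i\to\infty$ and $A_{p_i}(2)=\frac{p_i+1}{p_i(p_i-1)}\to 0$.

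The bulk of the work is ${\bf A4}$ and ${\bf A5}$. For ${\bf A4}$, note that $s-(\kappa-1)=s-1$ here and that the elementary bound $\alpha_p(s)=\frac{p+1}{p^s+1}\le\frac{2}{p^{s-1}}$ holds for every prime $p$; comparing the tail sum with an integral then gives, for $n\ge2$ and $s\ge2$,
\begin{gather*}
\sum_{i>l}\alpha_{p_i}^{\,n}(s)\le 2^n\sum_{m>p_l}m^{-n(s-1)}\le\frac{2^n}{n(s-1)-1}\,p_l^{\,1-n(s-1)}\le\frac{2^np_l}{b(p_l)^{\,n(s-1)}},
\end{gather*}
so (\ref{A4}) holds with $C(n,l)=2^np_l$; then $\limsup_nC(n,l)^{1/n}=2$ yields (\ref{A4b}) with $c=1/2$, and $C(n,l)b(p_l)^{-n(s-1)}=p_l(2p_l^{-(s-1)})^{n}$ is, for $p_l\ge3$, decreasing in $n$ and at $n=2$ equals $4p_l^{\,3-2s}\le4/p_l\to0$ as $l\to\infty$, which is (\ref{A4a}). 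For ${\bf A5}$ I would differentiate $A_p(s)=(p+1)(p^s-p)^{-1}$ to get $A'_p(s)=-\frac{(p+1)p^s\ln p}{(p^s-p)^2}$, so that $|A'_p(s)A_p(s)|=\frac{(p+1)^2p^s\ln p}{(p^s-p)^3}\asymp\frac{\ln p}{p^{2s-2}}$ as $p\to\infty$, and the series $\sum_pA'_p(s)A_p(s)$ converges for every $s\ge2$ (its terms are $\asymp\ln p/p^2$ at $s=2$) by Weierstrass comparison.

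Once ${\bf A1}$--${\bf A6}$ are in place, Theorem \ref{sec3:prop1} gives mod-Poisson convergence of $\omega(X_s)$ on $\mathbb{C}$ with parameter $t_s=\sum_p\alpha_p(s)=\sum_p\frac{p+1}{p^s+1}$ and residue function (\ref{psi}) evaluated at $\alpha_p(\kappa)=\frac{p+1}{p^2+1}$, which is exactly the $\psi$ in the statement; Remark \ref{sec3:rem1}, using ${\bf A6}$, upgrades the speed to $o(t_s^{-\vartheta})$ for all $\vartheta>0$, and Theorems \ref{sec3:th1}, \ref{sec3:th2}, \ref{sec3:th3} then transfer verbatim. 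I expect the only genuinely fiddly point to be the uniform-in-$n$ bookkeeping inside ${\bf A4}$: one has to be sure that the constant $C(n,l)$ produced by the tail estimate is merely geometric in $n$ (here $C(n,l)=2^np_l$), since that is precisely what (\ref{A4a}) and (\ref{A4b}) demand, and it is the crude pointwise bound $\alpha_p(s)\le2/p^{s-1}$ together with the integral comparison that delivers this.
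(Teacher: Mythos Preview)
Your proposal is correct and follows essentially the same approach as the paper: verify ${\bf A1}$--${\bf A6}$ for the Dedekind Dirichlet series and then invoke Theorem~\ref{sec3:prop1} and Remark~\ref{sec3:rem1}. The paper dismisses ${\bf A1}$--${\bf A4}$ as ``obvious (look at the first example for ${\bf A4}$)'' and only spells out ${\bf A5}$ and ${\bf A6}$, whereas you supply the explicit tail bound $\alpha_p(s)\le 2/p^{s-1}$ and the choice $C(n,l)=2^np_l$ for ${\bf A4}$; otherwise the arguments coincide (your derivative $A_p'(s)$ and the resulting convergent series match the paper's exactly).
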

\begin{proof}
We check  that conditions ${\bf{A1-A6}}$ are satisfied for $s>2.$ The first four conditions ${\bf{A1-A4}}$ are obvious (look at the first example for ${\bf{A4}}$). 
Condition ${\bf{A6}}$ is true because of relation (\ref{func_zeta}) and equation (\ref{func_ded}).
Let us check ${\bf{A5}}$. The derivative of the function $A_p(s)$ is given by
\begin{gather*}
A'_p(s) = - \frac{p^s (p+1) \ln p}{(p^s - p)^2}.
\end{gather*}
Thus, we get the following series:
\begin{gather*}
\sum_{p \in \mathcal{P}_{\mathbb{N}}} A'_p(s) A_p(s) = - \sum_{p \in \mathcal{P}_{\mathbb{N}}} \frac{p^s (p+1)^2 \ln p}{(p^s - p)^3}
\end{gather*}
that converges for any $s \geq 2$.

Hence, we can apply Theorem \ref{sec3:prop1} and remark \ref{sec3:rem1} to conclude the proof.
\end{proof}
Similarly to the first example with the Riemann zeta distribution, in the Berry-Esseen estimate (Theorem \ref{sec3:th2}) for the constant $\mathbb{P}_2(\kappa)$ we have the following bound:
\begin{eqnarray*}
\mathbb{P}_2(2) =& \sum_{p \in \mathcal{P}_{\mathbb{N}}} \left(\frac{p+1}{p^2+1}\right)^2\\
 <& \sum_{p \in \mathcal{P}_{\mathbb{N}}} \frac{(p+1)^2}{p^4} \\
 = &\sum_{p \in \mathcal{P}_{\mathbb{N}}} \frac{1}{p^2} + \sum_{p \in \mathcal{P}_{\mathbb{N}}} \frac{2}{p^3} + \sum_{p \in \mathcal{P}_{\mathbb{N}}} \frac{1}{p^4}  \\
< &\frac{1}{2^2} + \frac{1}{3^2} + \frac{1}{3^2} \zeta(2) + 2 \left(\frac{1}{2^3} + \frac{1}{3^3} + \frac{1}{3^3} \zeta(3)\right) + \frac{1}{2^4} + \frac{1}{3^4} + \frac{1}{3^4} \zeta(4) \\
 <& 0.55 + 0.42 + 0.09\\
  =& 1.06.
\end{eqnarray*}

\subsection{The Euler totient function}
Let 
\begin{gather*}
a(n) = n \prod_{p \in \mathcal{P}_{\mathbb{N}}, \, p | n} \left(1 - \frac{1}{p}\right), \quad n \in \mathbb{N},
\end{gather*}
be the Euler totient function. 
Then we get the Euler zeta function
\begin{gather*}
A(z) = \sum_{n=1}^{\infty} \frac{a(n)}{n^z} = \frac{\zeta(z-1)}{\zeta(z)}
\end{gather*}
that is an analytic function in $\Re z > 2,$ so $\kappa = 2.$ 

Let us define the Euler zeta distribution of the random variable 
$X_s$, $s > 2$, by
\begin{eqnarray}\label{Eul}
\mathbf{P}(X_s = n) = \frac{a(n)}{A(s) n^s}, \quad n \in \mathbb{N},
\end{eqnarray}
with the following associated quantities:
\begin{gather*}
A_p(s) = \frac{p - 1}{p^s - p}, \quad p \in \mathcal{P}_{\mathbb{N}},\\
\mathbf{P}(p | X_s) = \alpha_p(s) = \frac{p-1}{p^s-1}, \quad p \in \mathcal{P}_{\mathbb{N}}.
\end{gather*}

Similar to the  previous example of the Dedekind Dirichlet series distribution, conditions ${\bf{A1-A6}}$ are satisfied
and we can apply Theorem \ref{sec3:prop1} and remark \ref{sec3:rem1}. 
\begin{theorem} The random variables $\omega(X_s)$, $s > 2$,where $X_s$ has the distribution at (\ref{Eul}) converge mod-Poisson as $s \searrow 2$
on $\mathbb{C}$
with parameters 
$$
t_s = \sum_{p \in \mathcal{P}_{\mathbb{N}}} \frac{p-1}{p^s-1}
$$
and residue function 
\begin{gather*}
\psi(z) = \exp \left(\sum_{p \in \mathcal{P}_{\mathbb{N}}} \left(\ln \left(1 + \frac{p-1}{p^2-1} (e^z - 1)\right) - \frac{p-1}{p^2-1} (e^z - 1)\right)\right)
\end{gather*} 
at speed $o\left(t_s^{-\vartheta}\right)$ for any $\vartheta > 0$.
Moreover,  Theorems \ref{sec3:th1}, \ref{sec3:th2} and \ref{sec3:th3} are true for the Euler zeta distribution.
\end{theorem}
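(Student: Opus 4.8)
The plan is to verify that this distribution satisfies the hypotheses ${\bf{A1}}$--${\bf{A6}}$ and then to quote Theorem \ref{sec3:prop1} and Remark \ref{sec3:rem1}, exactly as was done for the Riemann zeta and Dedekind examples. Since $A(s) = \zeta(s-1)/\zeta(s)$, hypotheses ${\bf{A1}}$ and ${\bf{A2}}$ are immediate: $\zeta(s-1)$ converges precisely for $\Re s > 2$ and, as $s \searrow 2$, $\zeta(s-1) \to +\infty$ while $\zeta(s) \to \zeta(2)$, so $A(s) \nearrow +\infty$. With the increasing enumeration $p_1 < p_2 < \cdots$ of $\mathcal{P}_{\mathbb{N}}$ we have $b(p_i) = p_i \to \infty$ and $A_{p_i}(2) = (p_i - 1)/(p_i^2 - p_i) = 1/p_i \to 0$, which is ${\bf{A3}}$. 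For ${\bf{A4}}$ I would argue exactly as in the Riemann subsection: from $\alpha_p(s) = (p-1)/(p^s - 1) \le p^{-(s-1)} = b(p)^{-(s - (\kappa - 1))}$ (recall $\kappa = 2$) and an integral comparison one gets $\sum_{i > l} \alpha_{p_i}^n(s) \le C(n,l)\, b(p_l)^{-n(s-1)}$ with, say, $C(n,l) = p_l/(n-1)$, for which (\ref{A4a}) and (\ref{A4b}) are easily checked uniformly in $s \ge 2$. Finally ${\bf{A6}}$ holds with $r = 1$ because $A^{-1}(s) = \zeta(s)/\zeta(s-1) = O(s - 2)$ as $s \searrow 2$ by (\ref{func_zeta}).

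The only computation that really needs to be carried out is ${\bf{A5}}$. Differentiating $A_p(s) = (p-1)/(p^s - p)$ gives
\[
A'_p(s) = -\frac{(p-1)\, p^s \ln p}{(p^s - p)^2},
\]
so that
\[
\sum_{p \in \mathcal{P}_{\mathbb{N}}} A'_p(s)\, A_p(s) = -\sum_{p \in \mathcal{P}_{\mathbb{N}}} \frac{(p-1)^2\, p^s \ln p}{(p^s - p)^3}.
\]
For every fixed $s \ge 2$ the general term is $O\!\left(\ln p / (p(p-1))\right)$ as $p \to \infty$, so the series converges, which establishes ${\bf{A5}}$. Having verified ${\bf{A1}}$--${\bf{A6}}$, Theorem \ref{sec3:prop1} yields mod-Poisson convergence of $\omega(X_s)$ on $\mathbb{C}$ with $t_s = \sum_{p} \alpha_p(s) = \sum_{p} (p-1)/(p^s - 1)$ and residue function obtained by substituting $\alpha_p(\kappa) = \alpha_p(2) = (p-1)/(p^2 - 1)$ into (\ref{psi}); this is precisely the $\psi$ in the statement. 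Remark \ref{sec3:rem1} then upgrades the speed to $o\!\left(t_s^{-\vartheta}\right)$ for every $\vartheta > 0$ and makes Theorems \ref{sec3:th1}, \ref{sec3:th2} and \ref{sec3:th3} applicable.

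I do not expect a genuine obstacle here: the argument is the Dedekind proof with $p + 1$ replaced throughout by $p - 1$, and since $p - 1 > 0$ for every prime all the positivity used in ${\bf{A3}}$ and ${\bf{A4}}$ is preserved. If anything requires care it is only the bookkeeping in ${\bf{A4}}$ — checking that the exponent $s - (\kappa - 1) = s - 1$ matches the true decay rate $\alpha_p(s) \asymp p^{-(s-1)}$ and that the constants $C(n,l)$ can be chosen independent of $s \ge \kappa$ — and both points were already settled in the Riemann zeta subsection, so they are simply reused.
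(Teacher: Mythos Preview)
Your proposal is correct and follows exactly the paper's approach: the paper itself gives no proof here beyond the sentence ``Similar to the previous example of the Dedekind Dirichlet series distribution, conditions ${\bf{A1}}$--${\bf{A6}}$ are satisfied and we can apply Theorem \ref{sec3:prop1} and remark \ref{sec3:rem1}.'' You have simply written out that verification in detail, including the explicit derivative computation for ${\bf{A5}}$ (which matches the Dedekind case with $p+1$ replaced by $p-1$), so there is nothing to add.
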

Similarly to the first example with the Riemann zeta distribution, in the Berry-Esseen estimate (Theorem \ref{sec3:th2}) for constant $\mathbb{P}_2(\kappa)$ we have the following inequality:
\begin{gather*}
\mathbb{P}_2(2) = \sum_{p \in \mathcal{P}_{\mathbb{N}}} \left(\frac{p-1}{p^2-1}\right)^2 
= \sum_{p \in \mathcal{P}_{\mathbb{N}}} \frac{1}{(p+1)^2} 
< \sum_{p \in \mathcal{P}_{\mathbb{N}}} \frac{1}{p^2} < 0.55.
\end{gather*}

\subsection{Number of positive divisors function}
Let 
\begin{gather*}
a(n) = \sum_{d | n} 1, \quad n \in \mathbb{N},
\end{gather*}
be the number of positive divisors function. 
Then we get the following Dirichlet series:
\begin{gather} \label{func_num}
A(z) = \sum_{n=1}^{\infty} \frac{a(n)}{n^z} = \zeta^2(z)
\end{gather}
that is an analytic function in $\Re z > 1,$ so $\kappa = 1.$ 

Define the distribution of a random variable 
$X_s$, $s > 1$, by
\begin{eqnarray}\label{posdiv}
\mathbf{P}(X_s = n) = \frac{a(n)}{A(s) n^s}, \quad n \in \mathbb{N}.
\end{eqnarray}

This distribution has the following associated quantities:
\begin{gather*}
A_p(s) = \frac{2 p^s - 1}{(p^s - 1)^2}, \quad p \in \mathcal{P}_{\mathbb{N}},\\
\mathbf{P}(p | X_s) = \alpha_p(s) = \frac{1}{p^s} \left(2 - \frac{1}{p^s}\right), \quad p \in \mathcal{P}_{\mathbb{N}}.
\end{gather*}

\begin{theorem} 
The random variables $\omega(X_s)$, $s > 1$, where $X_s$ has the distribution at (\ref{posdiv}) converge mod-Poisson as $s\searrow 1$
on $\mathbb{C}$
with parameters 
$$
t_s = \sum_{p \in \mathcal{P}_{\mathbb{N}}} \frac{2 p^s - 1}{p^{2 s}}
$$ 
and residue function 
\begin{gather*}
\psi(z) = \exp \left(\sum_{p \in \mathcal{P}_{\mathbb{N}}} \left(\ln \left(1 + \frac{2 p - 1}{p^{2 }} (e^z - 1)\right) - \frac{2 p - 1}{p^{2 }} (e^z - 1)\right)\right)
\end{gather*} 
at speed $o\left(t_s^{-\vartheta}\right)$ for any $\vartheta > 0$.
Moreover, Theorems \ref{sec3:th1}, \ref{sec3:th2} and \ref{sec3:th3} are true for this case.
\end{theorem}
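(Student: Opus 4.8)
The plan is to verify that conditions ${\bf A1}$–${\bf A6}$ hold for the divisor-function distribution (\ref{posdiv}) and then invoke Theorem \ref{sec3:prop1} together with Remark \ref{sec3:rem1}, exactly as in the three preceding examples; the stated residue function is just the specialization of (\ref{psi}) at $\kappa = 1$ with $\alpha_p(1) = (2p-1)/p^2$, and the applicability of Theorems \ref{sec3:th1}, \ref{sec3:th2} and \ref{sec3:th3} follows automatically once mod-Poisson convergence at speed $o(t_s^{-\vartheta})$ is established. One remark up front: the divisor function is multiplicative but not completely multiplicative (e.g.\ $a(4) = 3 \neq a(2)^2$), so ${\bf A0}$ fails and one cannot run the $\Omega(X_s)$ machinery of Section \ref{sec4}; this is why only the $\omega(X_s)$ statements are claimed.

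First I would dispatch the easy conditions. ${\bf A1}$ and ${\bf A2}$ hold because $A(s) = \zeta^2(s)$ converges for $s > 1$ and increases to $+\infty$ as $s \searrow 1$, so $\kappa = 1$; ${\bf A3}$ holds with the primes enumerated in increasing order, since $b(p_i) = p_i \to \infty$ and $A_{p_i}(1) = (2p_i-1)/(p_i-1)^2 \to 0$; and ${\bf A6}$ follows from (\ref{func_num}) and (\ref{func_zeta}), which give $A(s) \sim (s-1)^{-2}$ and hence $A^{-1}(s) = O((s-1)^2)$, so $r = 2$ works.

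The two slightly more substantial points are ${\bf A4}$ and ${\bf A5}$. For ${\bf A4}$ I would exploit the bound $\alpha_p(s) = p^{-s}(2 - p^{-s}) < 2p^{-s}$, so that $\sum_{i>l}\alpha_{p_i}^n(s) < 2^n \sum_{i>l} p_i^{-ns}$; the tail $\sum_{i>l} p_i^{-ns}$ is controlled exactly as in the Riemann zeta example, so taking $C(n,l) = 2^n C_0(n,l)$, with $C_0$ the constant from that example, verifies (\ref{A4}), (\ref{A4a}) and (\ref{A4b}), the only change being that the constant $c$ in (\ref{A4b}) gets halved. For ${\bf A5}$ I would differentiate $A_p(s) = (2p^s-1)/(p^s-1)^2$ to obtain $A'_p(s) = -2p^{2s}\ln p/(p^s-1)^3$, whence $A'_p(s)A_p(s) = O(\ln p \cdot p^{-2s})$ as $p \to \infty$ uniformly for $s \ge 1$, so $\sum_p A'_p(s)A_p(s)$ converges by comparison with $\sum_p \ln p / p^2$.

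With all of ${\bf A1}$–${\bf A6}$ in hand, Theorem \ref{sec3:prop1} yields mod-Poisson convergence of $\omega(X_s)$ on $\mathbb{C}$ with parameters $t_s = \mathbb{P}(s) = \sum_p (2p^s-1)p^{-2s}$ and residue function (\ref{psi}), Remark \ref{sec3:rem1} upgrades the speed to $o(t_s^{-\vartheta})$ for every $\vartheta > 0$, and Theorems \ref{sec3:th1}, \ref{sec3:th2} and \ref{sec3:th3} then apply verbatim. The only place where any genuine care is needed is the uniformity in ${\bf A4}$, but since $\alpha_p(s)$ differs from $p^{-s}$ only by the bounded factor $2 - p^{-s}\in(1,2)$, this step reduces transparently to the zeta computation already performed, so I do not anticipate a real obstacle here.
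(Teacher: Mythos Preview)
Your proposal is correct and follows essentially the same route as the paper: verify conditions ${\bf A1}$--${\bf A6}$ (the paper also dismisses ${\bf A1}$--${\bf A4}$ as obvious and refers back to the Riemann zeta example for ${\bf A4}$, checks ${\bf A5}$ by differentiating $A_p(s)$, and reads ${\bf A6}$ off $A(s)=\zeta^2(s)$), then invoke Theorem~\ref{sec3:prop1} and Remark~\ref{sec3:rem1}. Your derivative $A'_p(s) = -2p^{2s}\ln p/(p^s-1)^3$ is in fact cleaner and correct; the paper's displayed formula for $A'_p(s)$ contains typos (a spurious factor of $s$ and a sign slip), though the conclusion is the same.
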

\begin{proof}
We show that conditions ${\bf{A1-A6}}$ are satisfied. The first four conditions ${\bf{A1-A4}}$ are obvious (look at the first example for ${\bf{A4}}$). 
Condition ${\bf{A6}}$ is true because of the relation (\ref{func_zeta}) and equation (\ref{func_num}).
Let us check ${\bf{A5}}$. The derivative of the function $A_p(s)$ is given by
\begin{gather*}
A'_p(s) = s \ln p \left(\frac{2}{p^s - 1} + \frac{2 (2 p^s - 1)}{(p^s - 1)^3}\right).
\end{gather*}
This leads to the following series:
\begin{gather*}
\sum_{p \in \mathcal{P}_{\mathbb{N}}} A'_p(s) A_p(s) = s \sum_{p \in \mathcal{P}_{\mathbb{N}}} \frac{\ln p \, (2 p^s - 1)}{(p^s - 1)^2} \left(\frac{2}{p^s - 1} + \frac{2 (2 p^s - 1)}{(p^s - 1)^3}\right)
\end{gather*}
that converges for any $s \geq 1$.

Hence, we can apply Theorem \ref{sec3:prop1} and remark \ref{sec3:rem1} to finish the proof.
\end{proof}
Similarly to the first example with the Riemann zeta distribution, in the Berry-Esseen estimate (Theorem \ref{sec3:th2}) for the constant $\mathbb{P}_2(\kappa)$ we have the following bound:
\begin{gather*}
\mathbb{P}_2(1) = \sum_{p \in \mathcal{P}_{\mathbb{N}}} \left(\frac{2 p - 1}{p^2}\right)^2 < \sum_{p \in \mathcal{P}_{\mathbb{N}}} \frac{4}{p^2} < 2.2.
\end{gather*}

\begin{remark}
Similarly to $R = \mathbb{N}$, we can consider $R = \mathbb{Z}$ with $b(n) = |n|$, $n \in \mathbb{Z}$, where $E = \{1, -1\}$, $\mathcal{P} =: \mathcal{P}_{\mathbb{Z}} = \{p, -p : p \in \mathcal{P}_{\mathbb{N}}\} \subset \mathbb{Z}$.
According to condition ${\bf{A3}}$, we can take an enumeration of $\mathcal{P}_{\mathbb{Z}} = \{p_i, i \in \mathbb{N}\}$  such that $|p_i| \leq |p_{i+1}|$ for any $i \geq 1$. Then
due to the fact that $a(n) = a(-n)$, $n \in \mathbb{Z} \setminus \{0\}$, we get
\begin{gather*}
A(z) = \sum_{n \in \mathbb{Z} \setminus \{0\}} \frac{a(n)}{|n|^z} = 2 \sum_{n=1}^{\infty} \frac{a(n)}{n^z}.
\end{gather*}
This is an analytic function in $\Re z > \kappa$ for some real $\kappa$.
And we can consider the distribution of the integer-valued non-zero random variable $X_s$, $s > \kappa$, that is defined by (\ref{distr}).
\end{remark}

\section{Non-numeric applications} \label{sec6}

Now we consider examples of non-numeric sets $R$ with some functions $a(n)$ and $b(n)$. 

\subsection{Random polynomials}

In this subsection we demonstrate an application of mod-Poisson convergence tools in the context of randomly selected monic polynomials over a finite field $\mathbb{F}_q$, where $q$ is prime or a power of a prime.
We denote by $\mathbb{F}_q[x]$ the polynomials over $\mathbb{F}_q$ and define a norm on $\mathbb{F}_q[x]$ by
\begin{gather*}
|F| = q^{\deg F}, \quad F \in \mathbb{F}_q[x].
\end{gather*}
Notice that $|F G| = |F| \cdot |G|$ for any non-zero $F, G \in F_q[x]$.

Denote by $\mathbb{P}_q[x]$ and $\mathbb{M}_q[x]$ the monic irreducible polynomials and monic polynomials, respectively, in $\mathbb{F}_q[x]$. 
Denote by $\mathbb{P}_{q, k}[x]$ and $\mathbb{M}_{q, k}[x]$ the monic irreducible polynomials and monic polynomials of degree $k$, respectively.
We note that  as in \cite{R},
\begin{gather*}
\# \mathbb{M}_{q, k}[x] = q^k, \\
\# \mathbb{P}_{q, k}[x] = \frac{q^k}{k} + O\left(\frac{q^{k/2}}{k}\right), \quad k \to \infty.
\end{gather*}
The analog of the Riemann zeta function $\zeta$ is then
\begin{gather*}
\zeta_q(z) = \sum_{F \in \mathbb{M}_{q}[x]} \frac{1}{|F|^z} = 
\prod_{P \in \mathbb{P}_{q}[x]} \left(1-\frac{1}{|P|^z}\right)^{-1}.
\end{gather*}

Let us consider $R = \mathbb{M}_q[x]$, where $E = \{1\}$, $\mathcal{P} = \mathbb{P}_q[x]$.
Let $a(F) = 1$ and $b(F) = |F|$ for any $F \in \mathbb{M}_q[x].$ These functions are completely multiplicative and $b(F) \geq 1$.
Hence, $A(z) = \zeta_q(z)$.
From the first expression for $\zeta_q$ we derive for $s > 1$ the following Dirichlet series:
\begin{gather} \label{zeta_q}
\zeta_q(s) = \sum_{k=0}^{\infty} \sum_{F \in \mathbb{M}_{q, k}[x]} \frac{1}{|F|^s} = \sum_{k=0}^{\infty} \sum_{F \in \mathbb{M}_{q, k}[x]} q^{-k s} = \sum_{k=0}^{\infty} \frac{q^k}{q^{k s}} = \frac{1}{1-q^{1-s}}.
\end{gather}
We remark that this makes it clear that $\zeta_q(z)$ is analytic function in $\Re z > 1$ (so $\kappa = 1$) with no zeroes on $\Re z = 1$. It is meromorphic with a simple pole at $z=1$ with a residue of $1/\ln q$.

We now describe how to sample a monic polynomial from $\mathbb{F}_q[x]$ using $\zeta_q(z)$ and examine some simple properties of the sampled random polynomial. 
Fix $s>1$ and denote the randomly selected polynomial by $F_s$. It will have distribution given by
\begin{gather*}
\mathbf{P}(F_s = F) = \frac{1}{\zeta_q(s) |F|^s}, \quad F \in \mathbb{M}_q[x].
\end{gather*}
We will call this the $q$-zeta distribution. The random polynomial $F_s$ has a factorization into a product of irreducible monic polynomials, and writing $F | F_s$ to denote that the monic polynomial $F$ appears in the factorization of $F_s$.
In this section, we will establish large deviation and Berry-Esseen estimates for  $\omega(F_s),$ the number of distinct irreducible factors and $\Omega(F_s),$ the total number of irreducible factors of $F_s.$ The $q$-zeta distribution has the following associated quantities:
\begin{gather*}
A_{P^{\alpha}}(s) = \frac{1}{|P|^{\alpha s} (1 - 1/|P|^s)}, \quad P \in \mathbb{P}_q[x], \quad \alpha \in \mathbb{N},\\
\mathbf{P}(F | F_s) = \frac{1}{|F|^s}, \quad F \in \mathbb{M}_q[x], \\
\mathbf{P}(F | F_s, G | F_s) = \frac{1}{|F|^s} \frac{1}{|G|^s} = \mathbf{P}(F | F_s) \mathbf{P}(G | F_s), \quad F, G \in \mathbb{M}_q[x].
\end{gather*}
\begin{theorem} 
The random variables $\omega(F_s)$, $s > 1$, converge mod-Poisson as $s\searrow 1$ on $\mathbb{C}$ with parameters 
$$
t_s = \mathbb{P}(s) = \sum_{P \in \mathbb{P}_q[x]} \frac{1}{|P|^s}
$$ 
and residue function 
\begin{gather*}
\psi(z) = \exp \left(\sum_{P \in \mathbb{P}_q[x]} \left(\ln \left(1 + \frac{e^z - 1}{|P|}\right) - \frac{e^z - 1}{|P|}\right)\right)
\end{gather*} 
at speed $o\left(\left(\mathbb{P}(s)\right)^{-\vartheta}\right)$ for any $\vartheta > 0$.
In addition, Theorems \ref{sec3:th1}, \ref{sec3:th2} and \ref{sec3:th3} hold for $\omega(F_s)$ under the $q$-zeta distribution.
\end{theorem}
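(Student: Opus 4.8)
The plan is to reduce the statement entirely to Theorem \ref{sec3:prop1} and Remark \ref{sec3:rem1} by verifying hypotheses $\mathbf{A1}$--$\mathbf{A6}$ for the $q$-zeta distribution; once that is done, the mod-Poisson convergence, the identification of $t_s$ and $\psi$, the claimed speed, and the applicability of Theorems \ref{sec3:th1}, \ref{sec3:th2}, \ref{sec3:th3} all follow formally. (We do not need $\mathbf{A0}$, since only $\omega(F_s)$ is treated.) First I would fix the enumeration of $\mathcal{P}=\mathbb{P}_q[x]$ by nondecreasing degree, breaking ties within a fixed degree arbitrarily; because each $\mathbb{P}_{q,k}[x]$ is finite and $|P|=q^{\deg P}$ this gives $|P_i|\to\infty$, and since $A_P(s)=\bigl(|P|^s(1-|P|^{-s})\bigr)^{-1}=(|P|^s-1)^{-1}$ one has $A_{P_i}(\kappa)=(|P_i|-1)^{-1}\to0$, so $\mathbf{A3}$ holds. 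Conditions $\mathbf{A1}$ and $\mathbf{A2}$ are immediate from (\ref{zeta_q}): $\zeta_q$ is analytic on $\Re z>1$, so $\kappa=1$, and $\zeta_q(s)=(1-q^{1-s})^{-1}\nearrow+\infty$ as $s\searrow1$. I would also record here that $\alpha_P(s)=A_P(s)/(1+A_P(s))=|P|^{-s}$, matching the listed $\mathbf{P}(F|F_s)=|F|^{-s}$.

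Next I would check the quantitative conditions $\mathbf{A4}$--$\mathbf{A6}$. For $\mathbf{A4}$ the argument is the polynomial analogue of the Riemann zeta example: grouping irreducibles by degree and using the crude bound $\#\mathbb{P}_{q,k}[x]\le q^k$, for $n\ge2$, $s\ge1$ one gets
\begin{gather*}
\sum_{i>l}\alpha_{p_i}^{n}(s)=\sum_{i>l}|P_i|^{-ns}\le\sum_{k\ge\deg P_l}\#\mathbb{P}_{q,k}[x]\,q^{-kns}\le\sum_{k\ge\deg P_l}q^{k(1-ns)}\le\frac{3\,b(p_l)}{b(p_l)^{ns}},
\end{gather*}
because $1-ns\le-1$ makes the series geometric with ratio $q^{1-ns}\le q^{-1}$; hence (\ref{A4}) holds with $C(n,l)=3\,b(p_l)$, (\ref{A4a}) follows from $3\,b(p_l)^{1-ns}\le3\,b(p_l)^{-1}\to0$ uniformly in $n\ge2$, $s\ge1$, and (\ref{A4b}) from $C(n,l)^{1/n}\to1$. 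For $\mathbf{A5}$, differentiating $A_P(s)=(|P|^s-1)^{-1}$ yields $A_P'(s)=-|P|^s\ln|P|\,(|P|^s-1)^{-2}$, so it suffices to show $\sum_P|P|^s\ln|P|\,(|P|^s-1)^{-3}<\infty$; grouped by degree this is bounded by a constant times $\sum_k k\,q^{k(1-2s)}$, convergent for every $s\ge1$. For $\mathbf{A6}$, from (\ref{zeta_q}) we have $\zeta_q(s)^{-1}=1-q^{1-s}=(s-1)\ln q+O\bigl((s-1)^2\bigr)$, so $\mathbf{A6}$ holds with $r=1$.

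Finally, with $\mathbf{A1}$--$\mathbf{A6}$ in hand I would invoke Theorem \ref{sec3:prop1}: it delivers mod-Poisson convergence of $\omega(F_s)$ on $\mathbb{C}$ with parameter $t_s=\mathbb{P}(s)=\sum_P|P|^{-s}$ and residue function given by (\ref{psi}) specialized at $\alpha_P(\kappa)=|P|^{-1}$, which is exactly the $\psi$ in the statement. Then, noting $\mathbf{A6}$ and that $\mathbb{P}(s)\asymp\ln\zeta_q(s)=-\ln(1-q^{1-s})\to\infty$ by Lemma \ref{sec3:lem1}, Remark \ref{sec3:rem1} upgrades the speed to $o\bigl((\mathbb{P}(s))^{-\vartheta}\bigr)$ for every $\vartheta>0$; and Theorems \ref{sec3:th1}, \ref{sec3:th2}, \ref{sec3:th3}, being proved under $\mathbf{A1}$--$\mathbf{A5}$, apply directly. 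I expect the only place requiring care is $\mathbf{A4}$ --- making the tail bound quantitatively of the required shape uniformly in $n$ and $s$ --- but as indicated this is a short computation resting on $\#\mathbb{P}_{q,k}[x]\le q^k$ and geometric decay, and does not need the sharp count $\#\mathbb{P}_{q,k}[x]=q^k/k+O(q^{k/2}/k)$.
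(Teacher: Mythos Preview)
Your proposal is correct and follows the same route as the paper: verify $\mathbf{A1}$--$\mathbf{A6}$ for the $q$-zeta setup and then invoke Theorem~\ref{sec3:prop1} together with Remark~\ref{sec3:rem1}. The only noteworthy difference is in the $\mathbf{A4}$ estimate, where the paper uses the prime polynomial count $\#\mathbb{P}_{q,k}[x]=q^k/k+O(q^{k/2}/k)$ to obtain $C(n,l)=2\delta q^{l+1}/((l+1)q^n)$, whereas you use the trivial bound $\#\mathbb{P}_{q,k}[x]\le q^k$ to get $C(n,l)=3\,b(p_l)$; both choices satisfy (\ref{A4a}) and (\ref{A4b}), so this is a harmless simplification rather than a different argument.
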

\begin{proof}
We show the conditions ${\bf{A1-A6}}$  are satisfied in this case. The first three conditions ${\bf{A1-A3}}$ are obvious, where for condition ${\bf{A3}}$ we choose an enumeration of $\mathbb{P}_q[x] = \{P_i, i \in \mathbb{N}\}$  that satisfies $|P_i| \leq |P_{i+1}|$ for any $i \geq 1$. 
Let us check condition ${\bf{A6}}$. Due to equation (\ref{zeta_q}) we get  $\zeta_q(z)$ has a simple pole at $z=1$ with residue  $1/\ln q.$
This implies that condition ${\bf{A6}}$ is fulfilled with $r = 1$.

We now consider condition ${\bf{A4}}.$ Notice that in the present case $\mathbb{P}_n(s) = \mathbb{P}(n s).$ In addition,
\begin{eqnarray} \label{eq_polyn_A4}
\begin{split}
\sum_{P : |P| > q^l} \frac{1}{|P|^{n s}} =& \sum_{k=l+1}^{\infty} \sum_{P \in \mathbb{P}_{q, k}[x]} \frac{1}{q^{k n s}} \\
=& \sum_{k=l+1}^{\infty} \frac{1}{q^{k n s}} \left(\frac{q^k}{k} + O\left(\frac{q^{k/2}}{k}\right)\right)\\
 \leq& \delta \sum_{k=l+1}^{\infty} \frac{1}{q^{k n s}} \frac{q^k}{k} \\
\leq &\frac{\delta}{l+1} \sum_{k=l+1}^{\infty} q^{k(1-n s)}\\
=& \frac{\delta}{l+1} \frac{q^{(l+1)(1-n s)}}{1 - q^{1-n s}}\\ 
\leq& \frac{2 \delta q^{l+1}}{(l+1) q^{n s}} \frac{1}{q^{l n s}}\\
 \leq& \frac{2 \delta q^{l+1}}{(l+1) q^{n}} \frac{1}{q^{l n s}}.
\end{split}
\end{eqnarray}
This means that in (\ref{A4})  we can take $C(n, l) = 2 \delta q^{l+1} / ((l+1) q^n)$ for some positive number $\delta$. 

Now we check condition ${\bf{A5}}$. The derivative of function $A_P(s)$ is equal to
\begin{gather*}
A'_P(s) = - \frac{|P|^s \ln |P|}{(|P|^s - 1)^2}.
\end{gather*}
This leads to the following series:
\begin{eqnarray*}
\sum_{P \in \mathbb{P}_q[x]} A'_P(s) A_P(s) =& - \sum_{P \in \mathbb{P}_q[x]} \frac{|P|^s \ln |P|}{(|P|^s - 1)^3}\\
 = & \ln q \, \sum_{k=1}^{\infty} \sum_{P \in \mathbb{P}_{q, k}[x]} \frac{k \,q^{k s}}{(q^{k s} - 1)^3} \\ 
=& \ln q \, \sum_{k=1}^{\infty} \frac{k \,q^{k s}}{(q^{k s} - 1)^3} \left(\frac{q^k}{k} + O\left(\frac{q^{k/2}}{k}\right)\right),
\end{eqnarray*}
that converges for any $s \geq 1$.

Hence, we can apply Theorem \ref{sec3:prop1} and remark \ref{sec3:rem1} to finish the proof.
\end{proof}

Similar to relation (\ref{eq_polyn_A4}), in the Berry-Esseen estimate (Theorem \ref{sec3:th2}) for the constant $\mathbb{P}_2(\kappa)$ we have the following inequality:
\begin{gather*}
\mathbb{P}(2) = \sum_{P \in \mathbb{P}_q[x]} \frac{1}{|P|^2} < \frac{\delta}{q - 1}.
\end{gather*}

\begin{theorem}
The random variables $\Omega(F_s)$, $s > 1$, converge mod-Poisson as $s\searrow 1$
on $\mathcal{S}_{(-\infty, \ln q)}$ 
with parameters 
$$
t_s = \sum_{n=1}^{\infty} \mathbb{P}(n s) = \sum_{n=1}^{\infty} \sum_{P \in \mathbb{P}_q[x]} \frac{1}{|P|^{n s}}
$$ 
and residue function 
\begin{gather*}
\psi (z) = \exp \left(\sum_{n=2}^{\infty} \mathbb{P}(n) \left(\frac{e^{n z} - 1}{n} - (e^z - 1)\right)\right)
\end{gather*} 
at speed $o\left(t_s^{-\vartheta}\right)$ for any $\vartheta > 0$.
In addition,  Theorems \ref{sec4:th1}, \ref{sec4:th2} and \ref{sec4:th3} are true for  $\Omega(F_s)$ under the $q$-zeta distribution.
\end{theorem}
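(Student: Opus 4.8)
The plan is to verify that the $q$-zeta distribution meets all the hypotheses of Theorem \ref{sec4:prop1} together with condition ${\bf A6}$ of Remark \ref{sec4:rem1}, and then invoke those results directly. Since $a(F) \equiv 1$ and $b(F) = |F|$ are completely multiplicative, ${\bf A0}$ holds. Conditions ${\bf A1}$–${\bf A3}$ are immediate (using the enumeration $|P_i| \le |P_{i+1}|$), and ${\bf A4}$ was already established in the proof of the preceding theorem for $\omega(F_s)$: by the estimate (\ref{eq_polyn_A4}) one may take $C(n,l) = 2\delta q^{l+1}/((l+1)q^{n})$, which satisfies (\ref{A4a}) and (\ref{A4b}). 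Likewise, ${\bf A6}$ holds with $r=1$ because (\ref{zeta_q}) shows $\zeta_q$ has a simple pole at $z=1$ with residue $1/\ln q$, so $A^{-1}(s) = O(s-1)$ as $s \searrow 1$. Note also that the monic irreducible polynomials of smallest degree have degree $1$, so $d = \inf_{P \in \mathbb{P}_q[x]} |P| = q > 1$; this is exactly the quantity appearing in the strip $\mathcal{S}_{(-\infty,\ln q)}$ of the statement.

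The only genuinely new verification is $\widetilde{{\bf A5}}$. I would compute, for $n \ge 2$,
\[
\sum_{P \in \mathbb{P}_q[x]} \frac{a(P)^n \ln |P|}{|P|^{n\kappa}} = \sum_{P \in \mathbb{P}_q[x]} \frac{\ln |P|}{|P|^{n}} = \ln q \sum_{k=1}^{\infty} \frac{k}{q^{kn}}\, \# \mathbb{P}_{q,k}[x] = \ln q \sum_{k=1}^{\infty} \frac{k}{q^{kn}}\left(\frac{q^k}{k} + O\!\left(\frac{q^{k/2}}{k}\right)\right),
\]
and then bound it by a geometric series exactly as in (\ref{eq_polyn_A4}): for the $\delta$ of that display the sum is at most $\delta \ln q \sum_{k\ge 1} q^{k(1-n)} = \delta \ln q\, q^{1-n}/(1-q^{1-n}) \le 2\delta \ln q\, q^{1-n}$, since $q^{1-n} \le q^{-1} \le 1/2$. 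Hence $\widetilde{{\bf A5}}$ holds with the constant function $\widehat C(n) \equiv 2\delta q \ln q$, for which $\limsup_{n\to\infty}\widehat C(n)^{1/n} = 1$.

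With all hypotheses in place, Theorem \ref{sec4:prop1} gives mod-Poisson convergence of $\Omega(F_s)$ on $\mathcal{S}_{(-\infty,\ln q)}$ at speed $o(1)$, and Remark \ref{sec4:rem1}, which uses ${\bf A6}$, upgrades this to speed $o(t_s^{-\vartheta})$ for every $\vartheta > 0$. It only remains to check the bookkeeping of the parameter and residue function: since $\alpha_P(s) = |P|^{-s}$ one has $\mathbb{P}_n(s) = \sum_{P} \alpha_P(s)^n = \sum_P |P|^{-ns} = \mathbb{P}(ns)$, so $t_s = \sum_{n\ge1}\mathbb{P}_n(s) = \sum_{n\ge1}\mathbb{P}(ns)$ and the residue function (\ref{psi2}) becomes exactly the $\psi$ displayed in the statement. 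Finally, Theorems \ref{sec4:th1}, \ref{sec4:th2} and \ref{sec4:th3} are stated under the hypotheses ${\bf A0}$–${\bf A4}$ and $\widetilde{{\bf A5}}$, which we have just verified, so they hold for $\Omega(F_s)$ as well. I do not expect a real obstacle here; the one point requiring a little care is ensuring that the $O(q^{k/2}/k)$ term in the prime polynomial theorem is dominated by the main term $q^k/k$ uniformly in $k$, which is what allows the geometric bound to go through with a constant (hence $n$-independent) $\widehat C(n)$.
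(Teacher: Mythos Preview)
Your proposal is correct and follows essentially the same route as the paper: verify ${\bf A0}$ and $\widetilde{{\bf A5}}$ (the remaining conditions ${\bf A1}$--${\bf A4}$ and ${\bf A6}$ having been checked in the preceding theorem for $\omega(F_s)$), then invoke Theorem~\ref{sec4:prop1} and Remark~\ref{sec4:rem1}. Your verification of $\widetilde{{\bf A5}}$ is in fact slightly tidier than the paper's---you bound the whole sum by a single geometric series to obtain a constant $\widehat C(n)\equiv 2\delta q\ln q$, whereas the paper splits off the terms $k=1,2$ before summing---but the idea is the same.
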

\begin{proof}
We only need check that ${\bf{A0}}$ and $\widetilde{{\bf{A5}}}$ are satisfied. Recall that the functions $a(F)$ and $b(F)$ are completely multiplicative.
Therefore, condition ${\bf{A0}}$ is true.
Condition $\widetilde{{\bf{A5}}}$ is fulfilled because $d = \inf_{P \in \mathbb{P}_q[x]} |P| = q$ and for any $n \geq 2$ we have
\begin{eqnarray*}
q^n \sum_{P \in \mathbb{P}_q[x]} \frac{\ln |P|}{|P|^n} = &
q^n \ln q \, \sum_{k=1}^{\infty} \sum_{P \in \mathbb{P}_{q, k}[x]} \frac{k}{q^{n k}}\\
 =& q^n \ln q \, \sum_{k=1}^{\infty} \frac{k}{q^{n k}} \left(\frac{q^k}{k} + O\left(\frac{q^{k/2}}{k}\right)\right)  \\
\leq &\delta \ln q \, \sum_{k=1}^{\infty} q^{k(1 - n - n/k)}\\
 \leq &  \delta \ln q \, \left(q + 1 + \sum_{k=3}^{\infty} q^{k(1 - 2 n/3)}\right)\\
 \leq & \delta \ln q \, \left(q + 1 + \sum_{k=3}^{\infty} q^{-k/3}\right)\\
  \equiv &\widehat C(n),
\end{eqnarray*}
where the series $\sum_{k=3}^{\infty} q^{-k/3}$ converges.

Hence, we can apply Theorem \ref{sec4:prop1} and remark \ref{sec4:rem1} to finish the proof. 
\end{proof}

We remark that similarly to relation (\ref{eq_polyn_A4}) for constant $\sum_{n=1}^{\infty} n \mathbb{P}_{n+1} (\kappa)$ in the Berry-Esseen estimate (Theorem \ref{sec4:th2}) we obtain the following relations
\begin{gather*}
\sum_{n=1}^{\infty} n \mathbb{P}(n+1) \leq
\delta \sum_{n=1}^{\infty} n q^{-n} = \frac{\delta q}{(q-1)^2}.
\end{gather*}

\subsection{Dedekind Domains}

In this subsection we demonstrate how mod-Poisson convergence can be applied to sampling random ideals in Dedekind domains. 
We start with a number field $K$, i.e. an extension field of $\mathbb{Q}$ of finite degree. 
The ring of integers of $K$, denoted by $\mathcal{O}_K$, is the ring of all algebraic integers in $K$. 
An algebraic integer is the root of a monic polynomial with integer coefficients. 
In any $\mathcal{O}_K$, there will be unique (up to order) factorization of ideals into a product of prime ideals.

There is a well-defined norm on ideals in $\mathcal{O}_K$.
This is denoted by $N$ and is defined as
\begin{gather*}
N(I) = [\mathcal{O}_K : I] = |\mathcal{O}_K/I|.
\end{gather*}
A key property of the norm is that it is multiplicative: for any ideals $I$, $J$ in $\mathcal{O}_K$,
\begin{gather*}
N(I \, J) = N(I) N(J).
\end{gather*}
Define $\mathcal{I}_K = \{I :I \text{ is an ideal in } \mathcal{O}_K\}$.
Let $\mathcal{P}_K \subset \mathcal{I}_K$ be the set of prime ideals in $\mathcal{O}_K$.
Further, we will use 
Landau's prime number Theorem (PNT).
\begin{theorem} (Landau) \label{Landau}
If for $x > 0$, $\rho(x) = \#\{\mathfrak{p} \in \mathcal{P}_K | N(\mathfrak{p}) \leq x\}$ then
\begin{gather*}
\rho(x) \sim \frac{x}{\ln x}, \quad x \to \infty.
\end{gather*}
\end{theorem}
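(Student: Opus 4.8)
The plan is to run the classical Hadamard--de la Vall\'ee Poussin argument for the prime number theorem, transplanted from $\zeta$ to the Dedekind zeta function $\zeta_K(s) = \sum_{I \in \mathcal{I}_K} N(I)^{-s} = \prod_{\mathfrak{p} \in \mathcal{P}_K}\bigl(1 - N(\mathfrak{p})^{-s}\bigr)^{-1}$, which converges absolutely with this Euler product for $\Re s > 1$. Grouping ideals by norm, write $\zeta_K(s) = \sum_{n \geq 1} a_n n^{-s}$ with $a_n = \#\{I \in \mathcal{I}_K : N(I) = n\} \geq 0$. The first step is to secure the analytic input the Tauberian step needs: that $\zeta_K$ continues to a meromorphic function on a neighborhood of the closed half-plane $\Re s \geq 1$ whose only singularity there is a simple pole at $s = 1$. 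For this I would count ideals of bounded norm by a geometry-of-numbers argument (lattice-point counting in expanding domains, organized by ideal class), obtaining $\sum_{n \leq x} a_n = \lambda_K x + O\bigl(x^{1 - 1/d}\bigr)$ with $d = [K : \mathbb{Q}]$ and $\lambda_K > 0$; partial summation then extends $\zeta_K(s)$ holomorphically to $\Re s > 1 - 1/d$ apart from the simple pole at $s = 1$ with residue $\lambda_K$. (Alternatively one may invoke Hecke's functional equation, but the lattice count is the elementary route and is the one Landau used.)

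The second and genuinely delicate step is the non-vanishing $\zeta_K(1 + it) \neq 0$ for all real $t \neq 0$. This proceeds exactly as over $\mathbb{Q}$: for $\sigma > 1$ one has $\Re \log \zeta_K(\sigma + it) = \sum_{\mathfrak{p}} \sum_{m \geq 1} \tfrac{1}{m} N(\mathfrak{p})^{-m\sigma} \cos\bigl(mt \log N(\mathfrak{p})\bigr)$, so the elementary inequality $3 + 4\cos\theta + \cos 2\theta \geq 0$ gives $\zeta_K(\sigma)^3 \, |\zeta_K(\sigma + it)|^4 \, |\zeta_K(\sigma + 2it)| \geq 1$, and letting $\sigma \downarrow 1$ while using the simple pole at $s = 1$ forbids a zero of $\zeta_K$ at $1 + it$. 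Together with the continuation from the first step, this shows that $-\zeta_K'(s)/\zeta_K(s) - \dfrac{1}{s - 1}$ extends continuously across the line $\Re s = 1$ and hence to $\Re s \geq 1$.

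Now introduce the ideal von Mangoldt function $\Lambda_K(I) = \log N(\mathfrak{p})$ when $I = \mathfrak{p}^{k}$ for some prime ideal $\mathfrak{p}$ and $k \geq 1$, and $\Lambda_K(I) = 0$ otherwise, so that $-\zeta_K'(s)/\zeta_K(s) = \sum_{I \in \mathcal{I}_K} \Lambda_K(I) N(I)^{-s}$ for $\Re s > 1$; grouping by norm gives a Dirichlet series $\sum_n b_n n^{-s}$ with $b_n = \sum_{N(I) = n} \Lambda_K(I) \geq 0$, abscissa of convergence $1$, and (by the preceding paragraph) a single simple pole of residue $1$ at $s = 1$ on the line $\Re s = 1$. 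The Wiener--Ikehara Tauberian theorem then yields $\psi_K(x) := \sum_{N(I) \leq x} \Lambda_K(I) = \sum_{n \leq x} b_n \sim x$. Since $\#\{\mathfrak{p} \in \mathcal{P}_K : N(\mathfrak{p}) \leq y\} \leq d\,\pi(y) = O(y)$, the prime-power tail satisfies $\sum_{k \geq 2} \sum_{N(\mathfrak{p})^k \leq x} \log N(\mathfrak{p}) = O\bigl(\sqrt{x}\,\log x\bigr)$, so $\theta_K(x) := \sum_{N(\mathfrak{p}) \leq x} \log N(\mathfrak{p}) \sim x$ as well, and a final Abel summation converts $\theta_K(x) \sim x$ into $\rho(x) = \sum_{N(\mathfrak{p}) \leq x} 1 \sim x/\log x$, as claimed.

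The main obstacle is the combination of the non-vanishing of $\zeta_K$ on $\Re s = 1$ with obtaining enough analytic continuation across that line for the Tauberian theorem to bite; once those are in hand, the deduction of $\rho(x) \sim x/\log x$ is a routine transcription of the classical argument over $\mathbb{Q}$.
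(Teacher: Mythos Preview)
Your outline is a correct sketch of the classical proof of Landau's prime ideal theorem: continuation of $\zeta_K$ past $\Re s = 1$ (via lattice-point counting or Hecke), the $3$--$4$--$1$ non-vanishing argument on the line $\Re s = 1$, Wiener--Ikehara applied to $-\zeta_K'/\zeta_K$, and the passage $\psi_K \to \theta_K \to \rho$ by stripping prime powers and Abel summation. Each of these steps is standard and your indication of where the genuine work lies (continuation plus non-vanishing) is accurate.

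However, you should be aware that the paper does \emph{not} prove this statement at all. Theorem~\ref{Landau} is stated as a named classical result (attributed to Landau) and is then used as a black box in the verification of conditions ${\bf A4}$, ${\bf A5}$, and $\widetilde{{\bf A5}}$ for the Dedekind-domain example. So there is no ``paper's own proof'' to compare against: the authors simply quote the theorem and proceed. Your sketch is therefore not competing with an alternative argument but rather supplying a proof where the paper deliberately omits one; in that sense it goes well beyond what the paper requires, and what you have written is a faithful account of how one would actually establish the result.
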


Let us consider $R = \mathcal{I}_K$, where $E = \{\mathcal{O}_K\}$, $\mathcal{P} = \mathcal{P}_K$ is the set of prime ideals in $\mathcal{O}_K$.
Let $a(I) = 1$ and $b(I) = N(I)$ for any $I \in \mathcal{I}_K \setminus \{0\}$, these functions are completely multiplicative and $b(I) \geq 1$.
Hence, we get the following series
\begin{gather*}
A(z) = \zeta_K(z) = \sum_{I \in \mathcal{I}_K \setminus \{0\}} \frac{1}{N(I)^z} = 
\prod_{\mathfrak{p} \in \mathcal{P}_K} \left(1-\frac{1}{N(\mathfrak{p})^s}\right)^{-1}.
\end{gather*}
This is called the Dedekind zeta function and is analytic in $\Re z > 1$ (so $\kappa = 1$).
As in the case of the Riemann zeta function, the Dedekind zeta function has an extension to a meromorphic function on $\mathbb{C}.$
This extension has a simple pole at $z = 1$ with residue $c_K,$ which is  the class number (see \cite{M} or \cite{L}).
The actual value of $c_K$ will not play a role in our development.

We can now define an $\mathcal{I}_K$-valued random variable,
$I_s$ for $s > 1,$ that has distribution under some fixed probability measure $\mathbf{P}$ given by
\begin{gather*}
\mathbf{P}(I_s = I) = \frac{1}{\zeta_K(s) N(I)^s}, \quad I \in \mathcal{I}_K \setminus \{0\}.
\end{gather*}
We call this the Dedekind zeta distribution. 
The random ideal $I_s$ has a factorization into a product of prime ideals, and writing $I | I_s$ to denote that the ideal $I$ appears in the factorization of $I_s$. We will establish large deviation and Berry-Esseen estimates for  $\omega(I_s),$ the number of distinct factors, and $\Omega(I_s),$ the total number of factors factors of $I_s.$
This distribution has the following associated quantities:
\begin{gather*}
A_{\mathfrak{p}^{\alpha}}(s) = \frac{1}{N(\mathfrak{p})^{\alpha s} (1 - 1/N(\mathfrak{p})^s)}, \quad \mathfrak{p} \in \mathcal{P}_K, \quad \alpha \in \mathbb{N},\\
\mathbf{P}(I | I_s) = \frac{1}{N(I)^s}, \quad I \in \mathcal{I}_K \setminus \{0\}, \\
\mathbf{P}(\mathfrak{p}^\alpha | I_s, \mathfrak{q}^\beta | I_s) = \frac{1}{N(\mathfrak{p})^{\alpha s}} \frac{1}{N(\mathfrak{q})^{\beta s}} = \mathbf{P}(\mathfrak{p}^\alpha | I_s) \mathbf{P}(\mathfrak{q}^\beta | I_s), \quad \mathfrak{p}\not=\mathfrak{q} \in \mathcal{P}_K.
\end{gather*}
\begin{theorem}
The random variables $\omega(I_s), \,\,s > 1$, converge mod-Poisson as $s\searrow 1$ on $\mathbb{C}$ with parameters 
$$
t_s = \mathbb{P}(s) = \sum_{\mathfrak{p} \in \mathcal{P}_K} \frac{1}{N(\mathfrak{p})^s}
$$ 
and residue function 
\begin{gather*}
\psi(z) = \exp \left(\sum_{\mathfrak{p} \in \mathcal{P}_K} \left(\ln \left(1 + \frac{e^z - 1}{N(\mathfrak{p})}\right) - \frac{e^z - 1}{N(\mathfrak{p})}\right)\right)
\end{gather*} 
at speed $o\left(\left(\mathbb{P}(s)\right)^{-\vartheta}\right)$ for any $\vartheta > 0$.

Moreover, Theorems \ref{sec3:th1}, \ref{sec3:th2} and \ref{sec3:th3} hold for $\omega(I_s)$ under the Dedekind zeta distribution.
\end{theorem}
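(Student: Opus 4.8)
The plan is to follow the template of the previous examples: verify that the Dedekind zeta distribution satisfies hypotheses ${\bf A1}$--${\bf A6}$, then invoke Theorem \ref{sec3:prop1} together with Remark \ref{sec3:rem1} to obtain mod-Poisson convergence at speed $o(t_s^{-\vartheta})$ for every $\vartheta>0$, which in turn activates Theorems \ref{sec3:th1}, \ref{sec3:th2} and \ref{sec3:th3}. Conditions ${\bf A1}$ and ${\bf A2}$ are immediate from the analytic continuation of $\zeta_K$ and its simple pole at $z=1$. For ${\bf A3}$ I would enumerate $\mathcal{P}_K = \{\mathfrak{p}_i\}$ so that $N(\mathfrak{p}_i) \le N(\mathfrak{p}_{i+1})$; only finitely many prime ideals have bounded norm, so $N(\mathfrak{p}_i) \to \infty$, and since $A_{\mathfrak{p}_i}(\kappa) = 1/(N(\mathfrak{p}_i) - 1) \to 0$, condition (\ref{APbdd}) holds. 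Condition ${\bf A6}$ follows with $r = 1$ because $\zeta_K(s) \sim c_K/(s-1)$ as $s \searrow 1$, hence $\zeta_K(s)^{-1} = O(s-1)$.

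The substantive work is ${\bf A4}$, and this is where I expect the only real obstacle, since here there is no elementary count of prime ideals of a given norm and genuine analytic input is needed. As $a \equiv 1$ and $b = N$ are completely multiplicative, $\alpha_{\mathfrak{p}}(s) = N(\mathfrak{p})^{-s}$ and $\mathbb{P}_n(s) = \sum_{\mathfrak{p} \in \mathcal{P}_K} N(\mathfrak{p})^{-ns} = \mathbb{P}(ns)$, exactly as in the random polynomial example, so the relevant tail is $\sum_{N(\mathfrak{p}) > N(\mathfrak{p}_l)} N(\mathfrak{p})^{-ns}$. I would estimate it by partial summation against Landau's prime ideal theorem (Theorem \ref{Landau}), $\rho(x) = O(x/\ln x)$: writing $y = N(\mathfrak{p}_l) \ge 2$, for $n \ge 2$ and $s \ge 1$ one gets
\begin{gather*}
\sum_{N(\mathfrak{p}) > y} \frac{1}{N(\mathfrak{p})^{ns}} \le ns\int_y^{\infty} \frac{\rho(x)}{x^{ns+1}}\,dx \le \frac{\delta\,ns}{\ln y}\int_y^{\infty} \frac{dx}{x^{ns}} \le \frac{2\delta\,y}{\ln y}\cdot\frac{1}{y^{ns}},
\end{gather*}
so that in (\ref{A4}), where $n(s-(\kappa-1)) = ns$ since $\kappa = 1$, one may take $C(n,l) = 2\delta\,N(\mathfrak{p}_l)/\ln N(\mathfrak{p}_l)$, which is constant in $n$. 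Then (\ref{A4b}) holds with $c = 1$ because $\limsup_{n\to\infty}C(n,l)^{1/n} = 1$, and since $ns \ge 2$ the ratio $C(n,l)/N(\mathfrak{p}_l)^{ns}$ is largest at $n = 2$, where it is $O\big(N(\mathfrak{p}_l)^{1-2s}/\ln N(\mathfrak{p}_l)\big) \to 0$ as $l \to \infty$, giving (\ref{A4a}). Arranging the power of $N(\mathfrak{p}_l)$ so that the bound has the precise form required by (\ref{A4}) is the step needing care.

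For ${\bf A5}$ I would compute $A'_{\mathfrak{p}}(s) = -N(\mathfrak{p})^s \ln N(\mathfrak{p})/(N(\mathfrak{p})^s - 1)^2$, whence
\begin{gather*}
\sum_{\mathfrak{p} \in \mathcal{P}_K} A'_{\mathfrak{p}}(s)\,A_{\mathfrak{p}}(s) = -\sum_{\mathfrak{p} \in \mathcal{P}_K} \frac{N(\mathfrak{p})^s \ln N(\mathfrak{p})}{(N(\mathfrak{p})^s - 1)^3};
\end{gather*}
for $s \ge 1$ each summand is $O\big(\ln N(\mathfrak{p})/N(\mathfrak{p})^2\big)$, and since at most $[K:\mathbb{Q}]$ prime ideals share a given norm, the series is dominated by a constant times $\sum_{m\ge 2} \ln m/m^2 < \infty$, so it converges. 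With ${\bf A1}$--${\bf A6}$ in hand, Theorem \ref{sec3:prop1} gives mod-Poisson convergence of $\omega(I_s)$ on $\mathbb{C}$ with parameters $t_s = \mathbb{P}(s)$ and residue function (\ref{psi}) specialized to $\alpha_{\mathfrak{p}}(\kappa) = 1/N(\mathfrak{p})$, which is exactly the stated $\psi$; Remark \ref{sec3:rem1} then upgrades the speed to $o(t_s^{-\vartheta})$ for all $\vartheta > 0$ via ${\bf A6}$ and Lemma \ref{sec3:lem1}, and the large deviation and Berry--Esseen conclusions of Theorems \ref{sec3:th1}, \ref{sec3:th2} and \ref{sec3:th3} follow immediately.
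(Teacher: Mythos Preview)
Your proposal is correct and follows essentially the same route as the paper: verify ${\bf A1}$--${\bf A6}$ and then invoke Theorem \ref{sec3:prop1} and Remark \ref{sec3:rem1}. Your partial-summation estimate for ${\bf A4}$ is the same idea as the paper's Abel-summation computation (the paper obtains $C(n,l)=\delta\,n\,l/((n-1)\ln l)$, you obtain a constant-in-$n$ bound; both satisfy (\ref{A4a}) and (\ref{A4b})). For ${\bf A5}$ you take a slightly more elementary path, bounding the series by $\sum_m (\ln m)/m^2$ via the fact that at most $[K:\mathbb{Q}]$ prime ideals share a given norm, whereas the paper re-applies Abel summation and Landau's PNT; your argument is shorter and equally valid.
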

\begin{proof}
We show the conditions ${\bf{A1-A6}}$ are satisfied.  The first three conditions ${\bf{A1-A3}}$ are obvious, where for condition ${\bf{A3}}$ we choose an enumeration of $\mathcal{P}_K = \{\mathfrak{p}_i, i \in \mathbb{N}\}$  such that $N(\mathfrak{p}_i) \leq N(\mathfrak{p}_{i+1})$ for all $i \geq 1$. 
Condition ${\bf{A6}}$ is fulfilled with $r = 1$ because according to \cite{M} (chapter 7) there exists the limit
\begin{gather*}
\lim_{s\searrow 1} \frac{\zeta_K(s)}{\zeta(s)}=c_K,
\end{gather*}
that is positive and because of relation (\ref{func_zeta}) for the Riemann zeta function.

Let us check condition ${\bf{A4}}$.
Notice that $\mathbb{P}_n(s) = \mathbb{P}(n s)$.
Put $f(t) = t^{-n s}$. We note that by Landau's PNT \ref{Landau}, $\lim_{k \to \infty} f(k) \rho(k) = 0$. Also, $\rho(1) = 0$. 
Then using Abel summation by parts, since the "boundary terms" $f \cdot \rho(\infty) = f(1) \rho(1) = 0$, we get the following series for $l \geq 2$:
\begin{eqnarray}
\begin{split}
\label{eq_ideal_A4}
\sum_{\mathfrak{p} : N(\mathfrak{p}) > l} \frac{1}{N(\mathfrak{p})^{n s}} = &\sum_{k=l+1}^{\infty} \sum_{\mathfrak{p} : N(\mathfrak{p}) = k} \frac{1}{k^{n s}}\\
=& \sum_{k=l+1}^{\infty} (\rho(k) - \rho(k-1)) f(k) \\ 
=& \int_{l}^{\infty} \rho_{[t]} f'(t) \, dt\\
\sim& n s \int_l^{\infty} \frac{t}{\ln t} t^{-n s-1} \, dt \\
\leq & \frac{n s}{\ln l} \int_l^{\infty} t^{- n s} \, dt  \\ 
=& \frac{l}{(1 - 1/(n s)) \ln l} \frac{1}{l^{n s}}\\
\leq& \frac{l}{(1 - 1/n) \ln l} \frac{1}{l^{n s}},
\end{split}
\end{eqnarray}
where we consider $l \geq 2$, otherwise if $l = 1$ the integral will be from $2$. 
It means that we can take $C(n, l) = \delta n l / ((n - 1) \ln l)$ for some positive number $\delta$.

Now we check condition ${\bf{A5}}$. The derivative of the function $A_{\mathfrak{p}}(s)$ is equal to
\begin{gather*}
A'_{\mathfrak{p}}(s) = - \frac{N(\mathfrak{p})^s \ln N(\mathfrak{p})}{(N(\mathfrak{p})^s - 1)^2}.
\end{gather*}
Similarly, due to Landau's PNT and Abel summation by parts, we get the following series:
\begin{eqnarray*}
\sum_{\mathfrak{p} \in \mathcal{P}_K} A'_{\mathfrak{p}}(s) A_{\mathfrak{p}}(s) = &
- \sum_{\mathfrak{p} \in \mathcal{P}_K} \frac{N(\mathfrak{p})^s \ln N(\mathfrak{p})}{(N(\mathfrak{p})^s - 1)^3} \\
= &\sum_{k=2}^{\infty} \sum_{\mathfrak{p} : N(\mathfrak{p}) = k} \frac{k^s \ln k}{(k^s - 1)^3}\\
=& \sum_{k=2}^{\infty} (\rho(k) - \rho(k-1)) \frac{k^s \ln k}{(k^s - 1)^3}\\
 =& \int_{2}^{\infty} \rho_{[t]} \left(\frac{t^s \ln t}{(t^s - 1)^3}\right)'_t \, dt  \\
\sim& \int_2^{\infty} \frac{t}{\ln t} \left(\frac{t^s \ln t}{(t^s - 1)^3}\right)'_t \, dt \\
=& \int_2^{\infty} \frac{t^s}{(t^s - 1)^3 \ln t} \, dt + \int_2^{\infty} \frac{s \, t^s}{(t^s - 1)^3} \, dt - \int_2^{\infty} \frac{3 s \, t^{2 s}}{(t^s - 1)^4} \, dt,
\end{eqnarray*}
where all three integrals are finite for any $s \geq 1$.

Hence, we can apply Theorem \ref{sec3:prop1} and remark \ref{sec3:rem1}. 
\end{proof}
Similarly to relation (\ref{eq_ideal_A4}), in the Berry-Esseen estimate (Theorem \ref{sec3:th2}) for constant $\mathbb{P}(2 \kappa)$ we have the following inequality:
\begin{gather*}
\mathbb{P}(2) = \sum_{\mathfrak{p} \in \mathcal{P}_K} \frac{1}{N(\mathfrak{p})^2} < \frac{\delta}{2 \ln 2}.
\end{gather*}

We can also obtain the large dviation results for $\Omega(I_s),$  the total number of ideal factors of $I_s$ counting multiplicities.
\begin{theorem}
The random variables $\Omega(I_s)$, $s > 1$, converge mod-Poisson as $s\searrow 1$
on $\mathcal{S}_{(-\infty, \ln 2)}$ 
with parameters 
$$
t_s = \sum_{n=1}^{\infty} \mathbb{P}(n s) = \sum_{n=1}^{\infty} \sum_{\mathfrak{p} \in \mathcal{P}_K} \frac{1}{N(\mathfrak{p})^{n s}}
$$ 
and residue function 
\begin{gather*}
\psi (z) = \exp \left(\sum_{n=2}^{\infty} \mathbb{P}(n) \left(\frac{e^{n z} - 1}{n} - (e^z - 1)\right)\right)
\end{gather*} 
at speed $o\left(t_s^{-\vartheta}\right)$ for any $\vartheta > 0$.
In addition,  Theorems \ref{sec4:th1}, \ref{sec4:th2} and \ref{sec4:th3} are true for $\Omega(I_s)$ under the Dedekind zeta distribution.
\end{theorem}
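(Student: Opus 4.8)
The plan is to show that the Dedekind zeta distribution satisfies all the hypotheses required by Theorem~\ref{sec4:prop1}, Remark~\ref{sec4:rem1} and Theorems~\ref{sec4:th1}--\ref{sec4:th3}, and then simply invoke those results. Conditions ${\bf{A1}}$--${\bf{A4}}$ together with ${\bf{A6}}$ have already been verified for this distribution in the proof of the preceding theorem on $\omega(I_s)$: there we fixed the enumeration of $\mathcal{P}_K$ with $N(\mathfrak{p}_i)\le N(\mathfrak{p}_{i+1})$, used the simple pole of $\zeta_K$ at $z=1$ for ${\bf{A6}}$, and used Landau's PNT (Theorem~\ref{Landau}) with Abel summation to obtain the bound (\ref{eq_ideal_A4}) with $C(n,l)=\delta n l/((n-1)\ln l)$. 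Hence it only remains to check the two extra conditions ${\bf{A0}}$ and $\widetilde{{\bf{A5}}}$.

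Condition ${\bf{A0}}$ is immediate, since $a(I)\equiv1$ and $b(I)=N(I)$ are completely multiplicative because the ideal norm is multiplicative. For $\widetilde{{\bf{A5}}}$, note first that every residue field $\mathcal{O}_K/\mathfrak{p}$ has at least two elements, so $d=\inf_{\mathfrak{p}\in\mathcal{P}_K}N(\mathfrak{p})\ge2$; in particular $d>1$, and the strip $\mathcal{S}_{(-\infty,\ln d)}$ appearing in Theorem~\ref{sec4:prop1} contains the strip $\mathcal{S}_{(-\infty,\ln2)}$ claimed in the statement. It remains to bound $\sum_{\mathfrak{p}\in\mathcal{P}_K}N(\mathfrak{p})^{-n}\ln N(\mathfrak{p})$ uniformly in $n\ge2$. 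Grouping the sum by the integer value $k=N(\mathfrak{p})$ and applying Abel summation by parts exactly as in (\ref{eq_ideal_A4}) --- the boundary terms vanish because $\rho(d-1)=0$ and $k^{-n}\ln k\cdot\rho(k)\to0$ by Landau's PNT --- one obtains
\begin{gather*}
\sum_{\mathfrak{p}\in\mathcal{P}_K}\frac{\ln N(\mathfrak{p})}{N(\mathfrak{p})^{n}}
\sim\int_{d}^{\infty}\frac{t}{\ln t}\cdot\frac{n\ln t-1}{t^{n+1}}\,dt
\le n\int_{d}^{\infty}t^{-n}\,dt=\frac{n}{n-1}\cdot\frac{d}{d^{n}},
\end{gather*}
so that $d^{n}\sum_{\mathfrak{p}}N(\mathfrak{p})^{-n}\ln N(\mathfrak{p})\le\widehat C(n)$ with $\widehat C(n)=\delta\,\frac{n}{n-1}\,d$ for a suitable $\delta>0$. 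Since $\widehat C(n)$ is bounded, $\limsup_{n\to\infty}\widehat C(n)^{1/n}\le1$, and $\widetilde{{\bf{A5}}}$ holds.

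With all of ${\bf{A0}}$--${\bf{A4}}$ and $\widetilde{{\bf{A5}}}$ in hand, Theorem~\ref{sec4:prop1} yields mod-Poisson convergence of $\Omega(I_s)$ on $\mathcal{S}_{(-\infty,\ln d)}\supseteq\mathcal{S}_{(-\infty,\ln2)}$ with parameters $t_s=\sum_{n\ge1}\mathbb{P}(ns)$ and residue function (\ref{psi2}), which here is exactly $\psi(z)=\exp\bigl(\sum_{n\ge2}\mathbb{P}(n)\bigl(\tfrac{e^{nz}-1}{n}-(e^{z}-1)\bigr)\bigr)$; Remark~\ref{sec4:rem1}, using ${\bf{A6}}$, upgrades the convergence speed to $o(t_s^{-\vartheta})$ for every $\vartheta>0$, and Theorems~\ref{sec4:th1}, \ref{sec4:th2} and \ref{sec4:th3} then apply verbatim to $\Omega(I_s)$ under the Dedekind zeta distribution. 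The only genuinely delicate point is the estimate in $\widetilde{{\bf{A5}}}$: one must turn the Landau asymptotic $\rho(t)\sim t/\ln t$ into an honest upper bound holding on the whole range of integration and, crucially, make the resulting constant $\widehat C(n)$ uniform in $n$ (splitting the integral at a fixed large cutoff handles both issues); everything else is routine bookkeeping paralleling the proofs for $\omega(I_s)$ and for $\Omega(F_s)$ in the random-polynomial subsection.
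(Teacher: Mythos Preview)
Your proposal is correct and follows essentially the same approach as the paper: verify ${\bf{A0}}$ from complete multiplicativity of the norm, then establish $\widetilde{{\bf{A5}}}$ by grouping over $k=N(\mathfrak{p})$, applying Abel summation with Landau's PNT to bound $\sum_{\mathfrak{p}}\ln N(\mathfrak{p})/N(\mathfrak{p})^n$ by a constant times $d^{-n}$ with $\widehat C(n)$ of polynomial growth, and finally invoke Theorem~\ref{sec4:prop1} and Remark~\ref{sec4:rem1}. Your explicit remark that $d\ge2$ forces $\mathcal{S}_{(-\infty,\ln d)}\supseteq\mathcal{S}_{(-\infty,\ln2)}$, and your note about converting the Landau asymptotic into a uniform upper bound, are both welcome clarifications that the paper leaves implicit.
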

\begin{proof}Recall that functions $a(I)$ and $b(I)$ are completely multiplicative.
Therefore, condition ${\bf{A0}}$ is true. 
Let us check condition $\widetilde{{\bf{A5}}}$. 
Due to Landau's PNT we have that $d = \inf_{\mathfrak{p} \in \mathcal{P}_K} N(\mathfrak{p}) \geq 2$ is finite and that similarly to (\ref{eq_ideal_A4}) for any $n \geq 2$
\begin{eqnarray*}
\sum_{\mathfrak{p} \in \mathcal{P}_K} \frac{\ln N(\mathfrak{p})}{N(\mathfrak{p})^n} \sim& - \int_d^{\infty} \frac{t}{\ln t} \left(\frac{\ln t}{t^n}\right)'_t \, dt \\
=& \int_d^{\infty} \frac{n - 1/\ln t}{t^n}  \, dt \\
\leq& (n - \ln d) \int_d^{\infty} t^{-n} \, d t\\
 =& \frac{(n - \ln d) d}{n-1} d^{-n}.
\end{eqnarray*}
It means that we can take $\widehat C(n) = \widetilde \delta (n - \ln d) n/(n-1)$ for some positive number $\widetilde \delta$.

Hence, we can apply Theorem \ref{sec4:prop1} and remark \ref{sec4:rem1}. 
\end{proof}
We remark that similarly to (\ref{eq_ideal_A4}) for constant $\sum_{n=1}^{\infty} n \mathbb{P}_{n+1} (\kappa)$ in the Berry-Esseen estimate (Theorem \ref{sec4:th2}) we obtain the following relations
\begin{gather*}
\sum_{n=1}^{\infty} n \mathbb{P}(n+1) \leq \sum_{n=1}^{\infty} n \frac{2 \delta (n+1)}{n \ln 2} 2^{-(n+1)} = \frac{2 \delta}{\ln 2} \sum_{n=2}^{\infty} n \, 2^{-n} = \frac{2 \delta}{\ln 2} (4 - 1/2) = \frac{3 \delta}{\ln 2}.
\end{gather*}

\end{document}